\newcommand{\eps}{\varepsilon}
\newcommand{\R}{\mathbb{R}}
\newcommand{\C}{\mathbb{C}}
\renewcommand{\phi}{\varphi}
\newcommand{\mcl}{\mathcal{L}}
\renewcommand{\Re}{\text{Re }}
\def\XXint#1#2#3{{\setbox0=\hbox{$#1{#2#3}{\int}$ }
		\vcenter{\hbox{$#2#3$ }}\kern-.6\wd0}}
\newtheorem{thm}{Theorem}
\newtheorem{prop}{Proposition}
\newtheorem*{thm*}{Theorem}
\newtheorem{lemma}[prop]{Lemma}
\newtheorem{hyp}{Hypothesis}
\numberwithin{equation}{section}
\numberwithin{prop}{section}
\title{Asymptotic stability of critical pulled fronts via resolvent expansions near the essential spectrum}
\author{Montie Avery \and Arnd Scheel}
\begin{document}
	
\maketitle

\begin{abstract}
	\noindent We study nonlinear stability of pulled fronts in scalar parabolic equations on the real line of arbitrary order, under conceptual assumptions on existence and spectral stability of fronts. In this general setting, we establish sharp algebraic decay rates and temporal asymptotics of perturbations to the front. Some of these results are known for the specific example of the Fisher-KPP equation, and our results can thus be viewed as establishing universality of some aspects of this simple model. We also give a precise description of how the spatial localization of perturbations to the front affects the temporal decay rate, across the full range of localizations for which asymptotic stability holds. Technically, our approach is based on a detailed study of the resolvent operator for the linearized problem, through which we obtain sharp linear time decay estimates that allow for a direct nonlinear analysis. 
\end{abstract}

\section{Introduction}

\subsection{Background and main results}

The formation of structure in spatially extended systems is often mediated by an invasion process, in which a pointwise stable state spreads into a pointwise unstable state. The Fisher-KPP equation 
\begin{equation}
u_t = u_{xx} + u - u^2 \label{e: Fisher}
\end{equation}
is a fundamental model for invasion processes, and much is known about invasion fronts in the Fisher-KPP equation. For all speeds $c \geq 2$, this equation has monotone traveling fronts $u(x,t) = q_c(x-ct)$ connecting the stable state $1$ to the unstable state $0$. The front with the minimum of these speeds, $c = 2$, which we call the \textit{critical front}, is distinguished for several reasons. Using comparison principles \cite{Kolmogorov, Comparison1, Lau, Aronson} or probabilistic methods relying on the relationship between the Fisher-KPP equation and branched Brownian motion \cite{Bramson1, Bramson2}, one may show that compactly supported initial conditions to \eqref{e: Fisher} spread with asymptotic speed 2. On the other hand, from the point of view of local stability, studying the critical front poses the greatest challenge. The stability of the supercritical fronts, with $c > 2$,  was first established by Sattinger \cite{Sattinger}, using exponential weights to move the essential spectrum to the left half plane. This is not possible for the critical front, due to the presence of absolute spectrum  \cite{AbsoluteSpecArndBjorn} at the origin for the linearization about the front -- with the optimal choice of weight, the essential spectrum is marginally stable, touching the imaginary axis at the origin. 

Stability of the critical front in \eqref{e: Fisher} was established by Kirchg\"assner \cite{Kirchgassner} and later refined using energy methods \cite{EckmannWayne}, renormalization group theory \cite{Bricmont, Gallay}, and most recently pointwise semigroup methods \cite{FayeHolzer}. While some of these papers consider equations of a more general form than \eqref{e: Fisher}, all are concerned with only second order, scalar (but possibly complex-valued) parabolic equations. From the point of view of time decay rates, the sharpest of these results is \cite{Gallay}, in which Gallay showed that sufficiently localization perturbations of the critical Fisher-KPP front decay with algebraic rate $t^{-3/2}$ and obtained a description of the leading order asymptotics of the solution for large time. The $t^{-3/2}$ decay rate was recently reobtained by Faye and Holzer \cite{FayeHolzer} using more direct pointwise semigroup methods, but without an asymptotic description of the solution. 

Here we study more general classes of equations. The main contributions of this paper are as follows:
\begin{enumerate}[(i)]
	\item We demonstrate that sharp nonlinear stability results on critical fronts depend only on conceptual assumptions on the existence and spectral stability of fronts, and not on the precise form of the equation considered. For instance, our results apply to equations without maximum principles. 
	\item We develop a new approach to the stability of critical fronts based on detailed estimates of the resolvent operator of the linearization near the branch point in the dispersion relation, which allow us to integrate along the essential spectrum when constructing the semigroup generated by the linearization. 
	\item We explore precisely how the spatial localization of perturbations to a critical front determines the algebraic time decay rate. 
\end{enumerate}

With a view towards pattern-forming systems which lack comparison principles in mind, we consider semilinear parabolic equations on the real line of arbitrary order of the form
\begin{align}
u_t = \mathcal{P} (\partial_x)u + f(u), \qquad u = u(x,t) \in \R, \,  t > 0, \,  x \in \R,  \label{e: eqn} 
\end{align} 
where $f$ is smooth, and $\mathcal{P}$ is a polynomial of the form
\begin{equation}
\mathcal{P} (\nu) = \sum_{k = 0}^{2m} p_k \nu^k, \quad (-1)^m p_{2m} < 0, \quad p_0 = 0.
\end{equation}
Hence $\mathcal{P}(\partial_x)$ is an elliptic operator of order $2m$. A key example is the fourth order extended Fisher-KPP equation, which can be derived as an amplitude equation near certain co-dimension 2 bifurcations in reaction-diffusion systems \cite{RottschaferDoelman}. Sixth order equations arise in the context of Rayleigh instabilities in fluid mechanics \cite[Section 3.3]{vanSaarloos} as well as in the phase field crystal model for elasticity and phase transitions \cite{Elder, ElderGalenko}. See the remarks in Section \ref{ss: remarks} on applicability of our methods to more general equations, and see Section \ref{s: discussion} for a discussion of several models to which our results directly apply.

We assume $f$ is smooth, with $f(0) = f(1) = 0$, $f'(0) > 0$, and $f'(1) < 0$. We are interested in invasion fronts connecting $u \equiv 1$ to $u \equiv 0$, and so we begin by discussing stability properties of these rest states for the full PDE \eqref{e: eqn} in a co-moving frame with speed $c$. The linearization about $u \equiv 0$ is then
\begin{align}
u_t = \mathcal{P}(\partial_x)u + c u_x + f'(0) u. \label{e: 0 linearization}
\end{align}
The $L^2$-spectrum of the constant-coefficient operator $\mathcal{P}(\partial_x) + c \partial_x + f'(0)$ is given, via the Fourier transform, by 
\begin{align}
\Sigma^+ = \{ \lambda \in \C: d_c^+ (\lambda, ik) = 0 \text{ for some } k \in \R \}.
\end{align}
where $d_c^+$ is the dispersion relation 
\begin{align}
d^+_c (\lambda, \nu) = \mathcal{P}(\nu) + c \nu + f'(0) - \lambda. 
\end{align}
A crucial feature of the Fisher-KPP front which we wish to retain is that the critical Fisher-KPP front is \textit{pulled}: it travels with the \textit{linear spreading speed}, i.e. the speed $c$ which marks the transition from pointwise growth to pointwise decay of compactly supported initial conditions to \eqref{e: 0 linearization}. Often these growth transitions are assumed to be captured by the presence of pinched double roots of the dispersion relation. We assume in the following hypothesis that there is a critical speed for which our dispersion relation has a \textit{simple} pinched double root at $\lambda = 0, \nu = -\eta_*$, which guarantees that this speed marks a transition from pointwise growth to pointwise decay. See \cite{HolzerScheelPointwiseGrowth} for a thorough description of linear spreading speeds and their relationship to pinched double roots. 

\begin{hyp}[Invasion at linear spreading speed] \label{hyp: spreading speed}
	We assume there exists a speed $c_*$ and an exponential rate $\eta_* > 0$ such that 
	\begin{enumerate}[(i)]
	\item (Simple pinched double root) For $\nu, \lambda$ near 0, we have
	\begin{align}
	d^+_{c_*} (\lambda, \nu - \eta_*) = \alpha \nu^2 - \lambda + \mathrm{O} (\nu^3) \label{e: right dispersion curve}
	\end{align}
	with $\alpha > 0$. 
	\item (Minimal critical spectrum) If $d^+_{c_*} (i \kappa, ik - \eta_*) = 0$ for some $k, \kappa \in \R$, then $k = \kappa = 0$. 
	\item (No unstable essential spectrum) $d^+_{c_*} (\lambda, i k - \eta_*) \neq 0$ for any $k \in \R$ and any $\lambda \in \C$ with $\Re \lambda > 0$. 
	\end{enumerate}
\end{hyp}
We refer to $c_*$ as the linear spreading speed, and from now on we fix $c = c_*$ and write $d^+_{c_*} = d^+$. One expects that the dynamics of pulled fronts are governed by the linearization at $u \equiv 0$, so we assume that the spectrum of the left rest state $u \equiv 1$ is stable in a strong sense, so that it does not interfere with the behavior on the right. The spectrum of the linearization about $u \equiv 1$, in the co-moving frame with speed $c_*$, is given by 
\begin{align}
\Sigma^- = \{ \lambda \in \C : d^- (\lambda, ik) = 0 \text{ for some } k \in \R \}, \label{e: left dispersion curve}
\end{align}
where $d^-$ is the left dispersion relation 
\begin{align}
d^- (\lambda, \nu) = \mathcal{P}(\nu) + c_* \nu + f'(1) - \lambda. 
\end{align}

\begin{hyp}[Stability on the left]\label{hyp: left}
	We assume that $\Re (\Sigma^-) < 0$. 
\end{hyp}

Front solutions $u(x,t) = q(x-c_*t)$ traveling with the linear spreading speed solve the traveling wave equation
\begin{align}
0 = \mathcal{P}(\partial_{\xi}) q + c_* \partial_\xi q + f(q), \label{e: TW ODE}
\end{align}
where $\xi = x - ct$.

\begin{hyp}[Existence of a critical front] \label{hyp: existence}
	We assume that \eqref{e: TW ODE} has a bounded solution $q_*$ with $q_*(\xi) \to 0$ as $\xi \to \infty$ and $q_*(\xi) \to 1$ as $\xi \to -\infty$, which we refer to as a critical front. 
\end{hyp}

The critical front $q_*$ is an equilibrium solution to \eqref{e: eqn} in a co-moving frame with speed $c_*$. Perturbations $v = u -q_*$ to a critical front $q_*$ solve 
\begin{align}
v_t = \mathcal{A} v + f(q_*+v) - f(q_*) - f'(q_*) v, \label{e: v eqn}
\end{align}
where $\mathcal{A}: H^{2m} (\R) \subseteq L^2 (\R) \to L^2 (\R)$ is the linearization about the front, 
\begin{align}
\mathcal{A} = \mathcal{P}(\partial_x) + c \partial_x + f'(q_*)
\end{align}
The assumption $f'(0)>0$ implies that the spectrum of $\mathcal{A}$ in $L^2$ is unstable, but Hypothesis \ref{hyp: spreading speed} guarantees that the essential spectrum of $\mcl = \omega \mathcal{A} \omega^{-1}$ is marginally stable, where $\omega$ is a smooth positive weight function satisfying 
\begin{align}
\omega(x) = \begin{cases}
e^{\eta_* x}, & x \geq 1, \\
1, & x \leq -1;
\end{cases} \label{e: omega def}
\end{align}
see Section \ref{ss: remarks} for details. In the Fisher-KPP equation, one has weak exponential decay of the critical front, $q_* (x) \sim x e^{-\eta_* x}$, and thus the derivative of the front does not give rise to a bounded solution to $\mcl u = 0$. We refer to the potential existence of such an $L^\infty$-eigenfunction as a \textit{resonance} at $\lambda = 0$. The lack of a resonance at $\lambda = 0$ for the Fisher-KPP linearization has been identified as an explanation for the faster $t^{-3/2}$ decay rate compared to the diffusive decay rate $t^{-1/2}$ \cite{ArndBjornEvansfcn}. Our analysis makes this observation precise, relying explicitly on the lack of a resonance at $\lambda = 0$. 
\begin{hyp}[No resonance or unstable point spectrum]\label{hyp: resonances}
	We assume that $\mcl : H^{2m} (\R) \subseteq L^2 (\R) \to L^2(\R)$ has no eigenvalues with $\Re \lambda \geq 0$. We additionally make the stronger assumption that there is no bounded pointwise solution to $\mcl u = 0$. 
\end{hyp}

We introduce algebraic weights to manage further subtleties in the localization of perturbations. For $r_{\pm} \in \R$, we define a smooth positive weight function $\rho_{r_-,r_+}$ satisfying 
\begin{align}
\rho_{r_-, r_+} (x) = \begin{cases}
\langle x \rangle^{r_+}, & x \geq 1, \\
\langle x \rangle^{r_-}, & x \leq -1,
\end{cases}
\end{align}
where $\langle x \rangle = (1+x^2)^{1/2}$. Using these weights, we define algebraically weighted Sobolev spaces $H^k_{r_-,r_+} (\R)$ through the norms 
\begin{align}
\|g\|_{H^k_{r_-,r_+}} = \| \rho_{r_-,r_+} g \|_{H^k}. 
\end{align}
For $k = 0$, we write $H^0_{r_-,r_+} (\R) = L^2_{r_-,r_+} (\R)$. If $r_- = 0$, $r_+ = r$, we write $\rho_r = \rho_{0, r}$ and denote the corresponding function space by $H^k_r (\R)$. 

We are now ready to state our main results. First, we show that the sharp decay rate $t^{-3/2}$ for sufficiently localized perturbations obtained by Gallay \cite{Gallay} and Faye and Holzer \cite{FayeHolzer} for the Fisher-KPP equation is valid in this general setting. Even in the Fisher-KPP setting, our result refines that of \cite{FayeHolzer} in the sense that Faye and Holzer require some exponential localization of perturbations on the left as well as on the right, which we show is not necessary. 

\begin{thm}[Stability with sharp decay rate]\label{t: stability}
	Assume Hypotheses \ref{hyp: spreading speed} through \ref{hyp: resonances} hold, and fix $r > 3/2$. There exist constants $\eps > 0$ and $C > 0$ such that if $\|\omega v_0\|_{H^1_r} < \eps$, then 
	\begin{align}
	\|\omega(\cdot) v(\cdot, t)\|_{H^1_{-r}} \leq \frac{C}{(1+t)^{3/2}} \|\omega v_0\|_{H^1_r}. 
	\end{align}
\end{thm}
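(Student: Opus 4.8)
The plan is to deduce nonlinear stability from sharp linear decay estimates for the semigroup $e^{\mcl t}$, obtained via a careful contour integral representation of the resolvent. First I would study the resolvent $(\mcl - \lambda)^{-1}$ as an operator from $L^2_r$ to $L^2_{-r}$ (the algebraic weights are essential for gaining integrability of the resolvent near the branch point), and express it through the far-field/core decomposition or the Jost-solution/exponential-dichotomy construction familiar from Sturm--Liouville and Gap-Lemma arguments. Hypothesis \ref{hyp: spreading speed}(i) says the dispersion relation has a simple quadratic branch point $\alpha\nu^2 = \lambda$ at the origin, so the relevant spatial eigenvalues behave like $\nu_\pm(\lambda) \sim \pm\sqrt{\lambda/\alpha}$ and the resolvent kernel will be, to leading order, built from $e^{-\sqrt{\lambda/\alpha}\,|\xi - y|}$-type terms; the key point is that Hypothesis \ref{hyp: resonances} (no resonance at $\lambda = 0$) forces the singular $\lambda^{-1/2}$ contribution that one would naively expect to \emph{vanish}, so that $(\mcl-\lambda)^{-1}: L^2_r \to L^2_{-r}$ has an expansion of the form (bounded term) $+\ \sqrt{\lambda}\,(\text{bounded term}) + \dots$ near $\lambda = 0$, with the weights $r > 3/2$ exactly enough to make the $\sqrt\lambda$-order remainder integrable against polynomial-in-$\xi$ growth. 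Hypotheses \ref{hyp: spreading speed}(ii)--(iii), \ref{hyp: left}, and the point-spectrum part of \ref{hyp: resonances} guarantee there is no other spectrum touching or crossing the imaginary axis, so the only contribution to the large-time behavior comes from a neighborhood of $\lambda = 0$.

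The second step is to write $e^{\mcl t} v_0 = \frac{1}{2\pi i}\int_\Gamma e^{\lambda t}(\mcl - \lambda)^{-1} v_0\, d\lambda$ over an appropriate contour, deform $\Gamma$ onto the essential spectrum (a parabola-like curve through the origin opening leftward, matching the branch of $\Sigma^+$ in the weighted space), and split into a local piece near $\lambda = 0$ and a remainder that decays exponentially in $t$ by the spectral gap away from the origin. On the local piece I would substitute the resolvent expansion: the leading bounded term integrates to something that decays like $t^{-1/2}$ in general, but because the resonance is absent that term's contribution to $L^2_r \to L^2_{-r}$ is actually lower order, and the genuine leading term is the $\sqrt\lambda$ one, whose contour integral $\frac{1}{2\pi i}\int e^{\lambda t}\sqrt{\lambda}\, d\lambda$ scales like $t^{-3/2}$. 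Tracking the weighted mapping properties carefully gives
\begin{align}
\|\omega(\cdot)(e^{\mcl t}\omega v_0)(\cdot)\|_{H^1_{-r}} \leq \frac{C}{(1+t)^{3/2}} \|\omega v_0\|_{H^1_r},
\end{align}
uniformly for $t \geq 0$; the $H^1$ (rather than $L^2$) regularity on both sides is recovered from parabolic smoothing of $\mathcal{P}(\partial_x)$ together with the structure of $\mcl$.

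The final step is the nonlinear argument. Writing $w = \omega v$, equation \eqref{e: v eqn} conjugated by $\omega$ becomes $w_t = \mcl w + \omega\, N(\omega^{-1} w)$ where $N(v) = f(q_*+v) - f(q_*) - f'(q_*)v = \mathrm{O}(v^2)$; since $q_*$ and its derivative are bounded and $\omega^{-1} \leq 1$ on the right while $\omega \equiv 1$ on the left, the nonlinearity gains a factor of $w$ and one checks $\|\omega N(\omega^{-1}w)\|_{H^1_r} \lesssim \|w\|_{H^1_{-r}}\|w\|_{H^1_\star}$ for a suitable norm; feeding the linear estimate into the Duhamel formula $w(t) = e^{\mcl t} w(0) + \int_0^t e^{\mcl(t-s)}\omega N(\omega^{-1}w(s))\, ds$ and running a standard fixed-point/bootstrap argument on the weighted norm $\sup_{t}(1+t)^{3/2}\|w(t)\|_{H^1_{-r}}$ closes the estimate provided $\varepsilon$ is small, using that $\int_0^t (1+t-s)^{-3/2}(1+s)^{-3}\,ds \lesssim (1+t)^{-3/2}$. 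The main obstacle I expect is Step 1: establishing the precise resolvent expansion near the branch point with explicit control of the weighted operator norms — in particular verifying that the no-resonance hypothesis really does kill the $\lambda^{-1/2}$ singularity and that $r > 3/2$ is the exact threshold for integrability of the error terms — requires delicate analysis of the spatial eigenfunctions through the branch point, analogous to (but more involved than) the Evans-function/Gap-Lemma computations in the second-order Fisher-KPP case.
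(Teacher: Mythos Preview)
Your proposal is correct and follows essentially the paper's approach: resolvent expansion in $\gamma=\sqrt{\lambda}$ near the branch point via a far-field/core decomposition (Section~\ref{s: full resolvent}), a contour-integral semigroup estimate along a curve tangent to the essential spectrum yielding $t^{-3/2}$ decay (Proposition~\ref{p: linear time decay}), and a direct Duhamel bootstrap on $\Theta(t)=\sup_{s\le t}(1+s)^{3/2}\|p(s)\|_{H^1_{-r}}$ exploiting the extra $\omega^{-1}$ factor in the nonlinearity (Section~\ref{s: nonlinear stability}). One small clarification: in the contour integral the bounded term $R_0$ actually contributes an \emph{exponentially} decaying piece (not $t^{-1/2}$), because $e^{\gamma^2 t}\gamma\,d\gamma = \tfrac{1}{2t}\,d(e^{\gamma^2 t})$ is a total derivative along the contour; the no-resonance hypothesis is used earlier, in Lemma~\ref{l: farfield core invertibility}, to guarantee that $R_0$ exists at all (ruling out a $\gamma^{-1}$ pole), rather than to suppress a $t^{-1/2}$ term at the level of the time integral.
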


Next, for more strongly localized data, we obtain an asymptotic description of the solution profile for large times, recovering Gallay's result \cite{Gallay} for the Fisher-KPP equation based on renormalization group theory. 

\begin{thm}[Stability with asymptotics]\label{t: asymptotics}
	Assume Hypotheses \ref{hyp: spreading speed} through \ref{hyp: resonances} hold, and let $\psi \in H^{2m}_s (\R), s < - \frac{3}{2},$ be the (unique up to a constant multiple) solution to $\mcl \psi = 0$ which is linearly growing at $+\infty$ and exponentially localized on the left. For any fixed $r > \frac{5}{2}$, there exist constants $\eps > 0$ and $C > 0$ such that if $\|\omega v_0\|_{H^1_r} < \eps$, then there is a real number $\alpha_* = \alpha_* (\omega v_0)$, depending smoothly on $\omega v_0$ in $H^1_r (\R)$ such that for $t > 1$, 
	\begin{align*}
	\|\omega(\cdot) v(\cdot, t) - \alpha_* t^{-3/2} \psi(\cdot)\|_{H^1_{-r}} \leq \frac{C}{(1+t)^2} \|\omega v_0\|_{H^1_r}.  
	\end{align*} 
\end{thm}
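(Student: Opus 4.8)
The plan is to run a nonlinear iteration on the perturbation equation after conjugating by the exponential weight, using a sharpened form of the linear estimates that already underlie Theorem~\ref{t: stability}. Set $w = \omega v$; then \eqref{e: v eqn} becomes $w_t = \mcl w + \mathcal{N}(w)$, where $\mathcal{N}(w) = \omega\big(f(q_* + \omega^{-1}w) - f(q_*) - f'(q_*)\omega^{-1}w\big)$, so Duhamel's formula gives
\[
w(t) = e^{\mcl t}w_0 + \int_0^t e^{\mcl(t-s)}\mathcal{N}(w(s))\,ds, \qquad w_0 = \omega v_0.
\]
The two structural features that make everything work are that $\mathcal{N}(w)$ is quadratic in $w$ and that it carries an extra factor $\omega^{-1}$, hence is exponentially localized as $x\to+\infty$ whenever $w$ grows at most polynomially there. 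The heart of the matter is to upgrade the resolvent analysis of Theorem~\ref{t: stability} to an \emph{asymptotic expansion} of the semigroup: for $r > 5/2$ there is a constant $\beta_1 \neq 0$ so that
\[
e^{\mcl t}h = \beta_1\,t^{-3/2}\,\langle h,\phi_{\mathrm{ad}}\rangle\,\psi + \mathcal{R}(t)h, \qquad \|\mathcal{R}(t)h\|_{H^1_{-r}} \le \frac{C}{(1+t)^2}\|h\|_{H^1_r}\ \ (t\ge 1),
\]
where $\psi$ is the linearly growing kernel element in the statement and $\phi_{\mathrm{ad}}$ is the exponentially localized, suitably normalized solution of the adjoint equation $\mcl^*\phi_{\mathrm{ad}} = 0$.

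To prove this expansion, represent $e^{\mcl t} = \frac{1}{2\pi i}\int_\Gamma e^{\lambda t}(\mcl-\lambda)^{-1}\,d\lambda$ and, as in the proof of Theorem~\ref{t: stability}, deform $\Gamma$ to a contour consisting of two rays into $\{\Re\lambda < 0\}$ joined by a small keyhole around the branch point $\lambda = 0$; this is permissible because the resolvent kernel extends analytically through the marginally stable essential spectrum away from the origin, and the rays contribute $\mathrm{O}(e^{-\delta t})$. On the keyhole one uses the far-field/core reduction at the simple pinched double root: since $d^+(\lambda,\nu-\eta_*) = \alpha\nu^2 - \lambda + \mathrm{O}(\nu^3)$ with $\alpha > 0$, the relevant spatial root is $\nu(\lambda) = \mathrm{O}(\sqrt\lambda)$ and the resolvent expands in powers of $\mu = \sqrt\lambda$, $(\mcl-\lambda)^{-1} = A_0 + \mu A_1 + \mathrm{O}(\mu^2)$, as bounded operators $H^1_r \to H^1_{-r}$. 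By Hypothesis~\ref{hyp: resonances} there is no resonance, so $A_0$ is bounded (ruling out a more singular $\mu^{-1}$ term) and, being analytic in $\lambda$, contributes nothing to the keyhole integral, as does every analytic term; the $\mu$-coefficient $A_1$ is genuinely present, and a count of the one-dimensional kernel and cokernel of the reduced problem identifies it as the rank-one operator $A_1 h = c_1\langle h,\phi_{\mathrm{ad}}\rangle\psi$. Since $\frac{1}{2\pi i}\oint e^{\lambda t}\mu\,d\lambda = c_0\,t^{-3/2}$ with $c_0\neq 0$ (so $\beta_1 = c_0 c_1$), this produces the leading term; the next non-analytic contribution — a $\lambda\log\lambda$ term stemming from the weak exponential decay of the front and the $\mathrm{O}(\nu^3)$ correction — contributes $\mathrm{O}(t^{-2})$, and controlling it uniformly in $H^1_r\to H^1_{-r}$ is exactly what forces $r > 5/2$.

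With the expansion in hand, the nonlinear step is a by-now-standard Duhamel iteration. Theorem~\ref{t: stability} gives $\|w(s)\|_{H^1_{-r}} \le C(1+s)^{-3/2}\|w_0\|_{H^1_r}$, so the quadratic, exponentially localized structure of $\mathcal{N}$ yields $\|\mathcal{N}(w(s))\|_{H^1_r} \le C(1+s)^{-3}\|w_0\|_{H^1_r}^2$, an integrable bound. Inserting the semigroup expansion into the Duhamel formula and splitting the integral at $s = t/2$: on $[0,t/2]$ write $(t-s)^{-3/2} = t^{-3/2} + \mathrm{O}(s\,t^{-5/2})$, extend the $s$-integral to $[0,\infty)$, and bound the errors by $\int_0^{t/2}\big(s\,t^{-5/2} + (1+t-s)^{-2}\big)(1+s)^{-3}\,ds = \mathrm{O}(t^{-2})$ together with a tail $t^{-3/2}\int_{t/2}^\infty(1+s)^{-3}\,ds = \mathrm{O}(t^{-7/2})$; on $[t/2,t]$ use $\|e^{\mcl(t-s)}\|_{H^1_r\to H^1_{-r}} \le C(1+t-s)^{-3/2}$ against $(1+s)^{-3}$ to get $\mathrm{O}(t^{-3})$. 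Combined with $e^{\mcl t}w_0 = \beta_1 t^{-3/2}\langle w_0,\phi_{\mathrm{ad}}\rangle\psi + \mathrm{O}(t^{-2})\|w_0\|_{H^1_r}$, this gives the theorem with
\[
\alpha_* = \beta_1\langle w_0,\phi_{\mathrm{ad}}\rangle + \beta_2\int_0^\infty\langle\mathcal{N}(w(s)),\phi_{\mathrm{ad}}\rangle\,ds, \qquad w_0 = \omega v_0,
\]
the integral converging by the $(1+s)^{-3}$ bound. Smooth dependence of $\alpha_*$ on $\omega v_0\in H^1_r$ follows from smooth dependence of the mild solution $s\mapsto w(s)$ on its data together with this integrable majorant, which legitimizes differentiation under the integral.

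The main obstacle is the refined resolvent expansion near $\lambda = 0$: establishing the analytic continuation of the resolvent across the essential spectrum, identifying the $\sqrt\lambda$-coefficient explicitly as the rank-one operator built from $\psi$ and $\phi_{\mathrm{ad}}$ — here the \emph{simple} pinched double root of Hypothesis~\ref{hyp: spreading speed} and the absence of a resonance in Hypothesis~\ref{hyp: resonances} are used decisively, the former creating the $\sqrt\lambda$ structure and the latter removing both the potential $\mu^{-1}$ singularity and the contribution of the analytic part — and controlling the $\lambda\log\lambda$ term one order beyond what Theorem~\ref{t: stability} needs, uniformly in the algebraically weighted spaces. Once this is done the remainder of the argument is routine; the only other point requiring some care is the smooth dependence of $\alpha_*$ on the data, which reduces to standard regularity of mild solutions and dominated convergence.
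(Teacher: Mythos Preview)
Your approach is essentially the paper's: expand the resolvent as $R_0 + \gamma R_1 + \mathrm{O}(\gamma^2)$ in $\gamma=\sqrt\lambda$ (requiring $r>5/2$), identify $R_1$ as rank one with range spanned by $\psi$, convert to the semigroup expansion $e^{\mcl t} = (2\sqrt\pi)^{-1}t^{-3/2}R_1 + \mathrm{O}(t^{-2})$, and close by Duhamel with the same $[0,t/2]$--$[t/2,t]$ splitting and extension of the integral to $[0,\infty)$ that you describe; the paper records this as Lemmas~\ref{l: phi ss estimates} and~\ref{l: phi0 decomp}.

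Two corrections. First, there is no $\lambda\log\lambda$ term in this problem: the resolvent expands in integer powers of $\gamma$ only (Propositions~\ref{p: right resolvent estimate} and~\ref{p: full resolvent order 2}), and it is simply the $\mathrm{O}(\gamma^2)$ remainder that produces the $\mathrm{O}(t^{-2})$ error after integrating along the contour; the need for $r>5/2$ comes from controlling this $\mathrm{O}(\gamma^2)$ remainder in $L^2_r\to H^{2m-1}_{-r}$, not from any logarithmic correction. Second, the paper does not identify the dual function as an adjoint eigenfunction; it extracts a function $g^1(y)$ directly from the expansion of the Green's function $G_\gamma(x,y)=G^0+\gamma\,\psi(x)g^1(y)+\mathrm{O}(\gamma^2)$ via the far-field/core construction (Lemma~\ref{l: R1}), and $\alpha_*$ is then the pairing of $p_0+\int_0^\infty \omega N\,ds$ against $g^1$. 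Your claim that $g^1$ solves $\mcl^*g^1=0$ is plausible but is an extra assertion not needed for, and not made in, the paper's argument.
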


Our methods are based on studying the regularity of the resolvent $(\mcl - \lambda)^{-1}$ in $\gamma = \sqrt{\lambda}$, with a suitable branch cut. In the setting of Theorem \ref{t: stability}, we show that the resolvent is Lipschitz in $\gamma$ near the origin in an appropriate sense. With more localization, we expand the resolvent to higher order, which allows us to identify the leading order asymptotics of the semigroup $e^{\mcl t}$ used to prove Theorem \ref{t: asymptotics}. At lower levels of localization, the resolvent loses Lipschitz continuity but first retains some H\"older continuity. As we allow for even less localized perturbations, the resolvent blows up near the origin, but with a quantifiable rate. In these respective settings, we obtain the following two theorems, giving a precise description of the relationship between spatial localization of their perturbations and their algebraic decay rates, which appears to be new even in the setting of the Fisher-KPP equation. 

\begin{thm}[Stability -- moderate localization]\label{t: stability resolvent Holder}
	Assume Hypotheses \ref{hyp: spreading speed} through \ref{hyp: resonances} hold. Fix $\frac{1}{2} < r < \frac{3}{2}$ and $s < r - 2$. For any $0 < \alpha < r - \frac{3}{2} + \min \left(1, -\frac{1}{2} - s \right)$, there exist positive constants $C$ and $\eps$ such that if $\|\omega v_0\|_{H^1_r} < \eps$, then 
	\begin{align}
	\|\omega(\cdot) v(\cdot, t)\|_{H^1_s} \leq \frac{C}{(1+t)^{1 + \frac{\alpha}{2}}} \|\omega v_0\|_{H^1_r}.  
	\end{align}
\end{thm}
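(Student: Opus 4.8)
The plan is to deduce the nonlinear statement from a sharp linear decay estimate for $e^{\mcl t}$ between the weighted spaces, combined with a standard Duhamel argument. Writing $w=\omega v$, equation \eqref{e: v eqn} becomes $w_t=\mcl w+\mathcal N(w)$ with $\mathcal N(w)=\omega\big(f(q_*+\omega^{-1}w)-f(q_*)-f'(q_*)\omega^{-1}w\big)=\tfrac12 f''(q_*)\omega^{-1}w^2+\mathrm{O}(\omega^{-2}w^3)$. Since $\omega^{-1}\sim e^{-\eta_* x}$ on the right, $\mathcal N(w)$ is exponentially localized there, and together with Hypothesis \ref{hyp: left} on the left this gives $\|\mathcal N(w)\|_{H^1_r}\lesssim\|w\|_{H^1_s}^2$ as long as $\|w\|_{H^1_s}$ stays bounded; thus the weak norm already controls the nonlinearity in the strong norm. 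Granting the linear bounds
\begin{align}
\|e^{\mcl t}\|_{H^1_r\to H^1_s}\lesssim(1+t)^{-(1+\alpha/2)},\qquad \|e^{\mcl t}\|_{H^1_r\to H^1_r}\lesssim 1,\label{e: lingoal}
\end{align}
a contraction argument for $w(t)=e^{\mcl t}w_0+\int_0^t e^{\mcl(t-\tau)}\mathcal N(w(\tau))\,d\tau$ closes for $\|w_0\|_{H^1_r}$ small, because $2(1+\alpha/2)>1$ makes $\int_0^t(1+t-\tau)^{-(1+\alpha/2)}(1+\tau)^{-2(1+\alpha/2)}\,d\tau\lesssim(1+t)^{-(1+\alpha/2)}$. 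So the content is \eqref{e: lingoal}.

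For the linear estimate I would write $e^{\mcl t}=\tfrac{1}{2\pi i}\oint_\Gamma e^{\lambda t}(\mcl-\lambda)^{-1}\,d\lambda$ and deform $\Gamma$ onto the spectrum. By Hypotheses \ref{hyp: spreading speed}(ii)--(iii) and \ref{hyp: resonances}, the only spectrum with $\Re\lambda\ge 0$ is the essential spectrum, which meets the imaginary axis only at the origin and, by Hypothesis \ref{hyp: spreading speed}(i), does so along the negative real axis with a simple square-root branch point, $\lambda=\alpha\nu^2+\mathrm{O}(\nu^3)$. One splits $\Gamma$ into a piece at distance $\gtrsim\delta$ from the origin, which can be pushed into $\{\Re\lambda<0\}$ and contributes $\mathrm{O}(e^{-\delta' t})$, and a piece wrapping the cut $(-\delta,0]$, which carries the algebraic decay and is the crux; on the latter I use the branch coordinate $\gamma=\sqrt\lambda$.

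Near $\gamma=0$ I would construct $(\mcl-\gamma^2)^{-1}$ by variation of parameters from the exponential dichotomies of the asymptotic operators at $\pm\infty$. On the left, Hypothesis \ref{hyp: left} gives a solution $u_-(\cdot,\gamma)$ of $(\mcl-\gamma^2)u=0$ decaying exponentially at $-\infty$ and analytic in $\lambda=\gamma^2$. On the right, the decaying solution factors as $u_+^{\mathrm s}(x,\gamma)=e^{-\nu_+(\gamma)x}h(x,\gamma)$, where $\nu_+(\gamma)=\gamma/\sqrt\alpha+\mathrm{O}(\gamma^2)$ is the spatial eigenvalue branching from the pinched double root — analytic in $\gamma$ after an analytic normal-form change of variable absorbing the $\mathrm{O}(\nu^3)$ terms — and $h$ is analytic in $\gamma$. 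The Evans-type Wronskian $W(\gamma)$ pairing $u_-$ with $u_+^{\mathrm s}$ is analytic in $\gamma$, and Hypothesis \ref{hyp: resonances} is precisely the statement that $u_-(\cdot,0)$ is a nonzero multiple of the linearly growing solution $\psi$ and not of the bounded solution $u_+^{\mathrm s}(\cdot,0)$; hence $W(0)\ne 0$, the $\gamma^{-1}$ pole that a resonance would create is absent, and $(\mcl-\gamma^2)^{-1}$ extends continuously to $\gamma=0$ as a bounded operator between appropriate weighted spaces. The only source of operator-level non-smoothness in $\gamma$ is then the slowly-varying factor $e^{-\nu_+(\gamma)x}$ (and its $y$-counterpart) in the far field, so I split $(\mcl-\gamma^2)^{-1}=R_{\mathrm{an}}(\gamma)+R_{\mathrm{br}}(\gamma)$ into its $\gamma$-even part $R_{\mathrm{an}}$ — analytic in $\lambda$, hence contributing no jump across the cut and nothing to the contour integral — and a remainder $R_{\mathrm{br}}$ with $R_{\mathrm{br}}(0)=0$.

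The final step is to quantify the $\gamma$-regularity of $R_{\mathrm{br}}$ as a map $H^1_r\to H^1_s$. From $|e^{-\nu_+(\gamma)x}-e^{-\nu_+(\gamma')x}|\lesssim|\gamma-\gamma'|^p\langle x\rangle^p$ for any $p\in[0,1]$, together with the at-most-linear growth of the resolvent kernel at $\gamma=0$, the weighted kernel difference of $R_{\mathrm{br}}$ is bounded in the far field by $|\gamma-\gamma'|^p\min(x,y)\max(x,y)^p$; a Hilbert--Schmidt estimate of this against the weights $\langle x\rangle^{2s}\langle y\rangle^{-2r}$ on $\{x,y\gtrsim1\}$ is finite exactly when $p<r-\tfrac32+\min(1,-\tfrac12-s)$, which yields $\|R_{\mathrm{br}}(\gamma)\|_{H^1_r\to H^1_s}\lesssim|\gamma|^p$ for all such $p$ (passing to $H^1$ costs only an extra factor $\nu_+(\gamma)\sim\gamma$ when a derivative falls on the exponential). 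A branch part of order $|\gamma|^p$ fed into the cut integral, via $\int_0^\delta e^{-st}s^{p/2}\,ds\sim t^{-(1+p/2)}$, produces $\|e^{\mcl t}\|_{H^1_r\to H^1_s}\lesssim(1+t)^{-(1+p/2)}$; since $\alpha$ may be any number below $r-\tfrac32+\min(1,-\tfrac12-s)$, choosing $p$ between $\alpha$ and that bound gives \eqref{e: lingoal}, while $\|e^{\mcl t}\|_{H^1_r\to H^1_r}\lesssim1$ follows from the same construction because the algebraic weight $\rho_r$ with $r>0$ neither moves the essential spectrum nor creates critical point spectrum. I expect the main obstacle to be the resolvent construction and the sharp Hölder estimate near $\gamma=0$: keeping uniform control of $u_\pm$, $h$, and $W$ as $\gamma\to 0$ along the deformed contour, and the bookkeeping of exactly how the non-Lipschitz far-field exponential interacts with the algebraic weights to produce the exponent $r-\tfrac32+\min(1,-\tfrac12-s)$.
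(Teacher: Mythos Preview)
Your overall strategy matches the paper's: establish H\"older continuity of the resolvent $(\mcl-\gamma^2)^{-1}$ in $\gamma$ near the origin between the weighted spaces, feed this into a contour integral to get the semigroup decay $t^{-(1+\alpha/2)}$, and close with a Duhamel argument exploiting the exponential localization of the nonlinearity. Your endpoint condition on $\alpha$ is the same one the paper obtains. The main differences are in how the resolvent estimate is actually produced and how the contour integral is organized.

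\textbf{Resolvent construction.} You build $(\mcl-\gamma^2)^{-1}$ directly via exponential dichotomies and an Evans-type Wronskian pairing of $u_-$ and $u_+^{\mathrm s}$. The paper instead first analyzes the constant-coefficient asymptotic operator $\mcl_+$ explicitly, decomposing its Green's function as a heat-like piece plus analytic corrections, and then transfers these estimates to $(\mcl-\gamma^2)^{-1}$ by a far-field/core decomposition (solve separately with $\mcl_\pm$ on the wings, then a Fredholm argument on exponentially weighted spaces handles the remaining compactly-sourced equation). A key device in the paper is the \emph{odd extension}: passing to odd data on the right replaces the singular heat kernel $\gamma^{-1}e^{-\nu_0\gamma|x-y|}$ by $G^{\mathrm{odd}}_\gamma(x,y)=\gamma^{-1}(e^{-\nu_0\gamma|x-y|}-e^{-\nu_0\gamma|x+y|})$, whose limit at $\gamma=0$ is $2\nu_0\min(x,y)$ --- exactly the kernel structure you invoke. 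In your dichotomy picture this same improvement is encoded in $W(0)\neq 0$, so the two arguments are equivalent in content but differently packaged. The paper's route avoids tracking all $2m$ dichotomy solutions explicitly, which is a real advantage for higher-order $\mcl$: your sketch is written as if for a scalar second-order equation (one $u_-$, one $u_+^{\mathrm s}$, scalar Wronskian), and carrying it out for order $2m$ requires the full projection formalism rather than a single pair of solutions.

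\textbf{Contour argument.} The paper integrates in the $\gamma$-plane along a contour tangent to the essential spectrum at the origin, writes $(\mcl-\gamma^2)^{-1}=R_0+\mathrm{O}(|\gamma|^\alpha)$, and observes that the $R_0$ term integrates to a total derivative in the contour parameter, hence contributes only exponentially decaying boundary terms. Your mechanism --- the even-in-$\gamma$ part $R_{\mathrm{an}}$ has no jump across the cut --- is the same observation in different language; either way the algebraic decay comes entirely from the $\mathrm{O}(|\gamma|^\alpha)$ remainder. One point of care your sketch glosses over: the ``cut'' $(-\delta,0]$ is where the essential spectrum actually sits, so the limiting contour must be chosen to stay on the correct side in the $\gamma$-plane, touching the spectrum only at the origin. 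The paper handles this explicitly via the quadratic parametrization $\gamma(a)=ia+c_2 a^2$.

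\textbf{Minor remark.} You list $\|e^{\mcl t}\|_{H^1_r\to H^1_r}\lesssim 1$ uniformly in $t$ among the needed linear bounds. The paper only uses (and only proves) this for $0<t<1$, from standard analytic-semigroup theory; for large $t$ the argument runs entirely through the $H^1_r\to H^1_s$ decay estimate. Since your Duhamel integrand $(1+t-\tau)^{-(1+\alpha/2)}$ is already bounded as $\tau\to t$, you in fact only need the short-time version as well.
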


\begin{thm}[Stability -- minimal localization]\label{t: stability resolvent blowup}
	Assume Hypotheses \ref{hyp: spreading speed} through \ref{hyp: resonances} hold. Fix $-\frac{3}{2} < r < 1/2$ and $s < r -2$. For any $\frac{1}{2} - r < \beta < -s - \frac{3}{2}$, there exist positive constants $C$ and $\eps$ such that if $\|\omega v_0\|_{H^1_r} < \eps$, 
	then  
	\begin{align}
	\|\omega(\cdot) v(\cdot, t)\|_{H^1_s} \leq \frac{C}{(1+t)^{1-\frac{\beta}{2}}} \|\omega v_0\|_{H^1_r}. 
	\end{align}
\end{thm}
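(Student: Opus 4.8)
The plan is to mimic the scheme used for Theorem~\ref{t: stability resolvent Holder}, replacing the H\"older-continuous resolvent there by a quantified blow-up of the resolvent at the branch point $\lambda=0$. Conjugating by the exponential weight, $w=\omega v$ solves $w_t=\mcl w+\mathcal{N}(w)$ with
\begin{equation*}
\mathcal{N}(w)=\omega\big(f(q_*+\omega^{-1}w)-f(q_*)-f'(q_*)\omega^{-1}w\big)=\tfrac{1}{2}f''(q_*)\,\omega^{-1}w^2+\mathrm{O}(\omega^{-2}w^3),
\end{equation*}
so that $w(t)=\omega e^{\mcl t}\omega^{-1}(\omega v_0)+\int_0^t\omega e^{\mcl(t-\tau)}\omega^{-1}\mathcal{N}(w(\tau))\,d\tau$. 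Two ingredients drive the argument: a linear decay estimate $\|\omega e^{\mcl t}\omega^{-1}g\|_{H^1_s}\lesssim(1+t)^{-(1-\beta/2)}\|g\|_{H^1_r}$, and the observation that $\mathcal{N}$ is quadratic in $w$ and \emph{exponentially localized} --- on $x\ge1$ it carries the factor $\omega^{-1}=e^{-\eta_*x}$, while on $x\le-1$ the algebraic weights in the norms are trivial --- so that for \emph{every} $r'\in\R$,
\begin{equation*}
\|\mathcal{N}(w)\|_{H^1_{r'}}\lesssim\|w\|_{H^1_s}^2,
\end{equation*}
using the one-dimensional Sobolev embedding $H^1_s\hookrightarrow L^\infty_s$ to control the at most polynomial growth of $w$ permitted by $s<0$ and absorbing it into the exponential weight.

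The heart of the matter, and where I expect the real work to lie, is the linear estimate. Write $\lambda=\gamma^2$ with the branch cut placed along (and to the left of) the marginally stable essential spectrum supplied by Hypotheses~\ref{hyp: spreading speed}--\ref{hyp: left}, and study $(\mcl-\gamma^2)^{-1}$ as a $\mathcal{B}(L^2_r,H^{2m}_s)$-valued function of $\gamma$ near the origin. Using a far-field/core decomposition of the resolvent (through exponential dichotomies), one exploits the simple pinched double root~\eqref{e: right dispersion curve} --- which makes the relevant far-field spatial eigenvalues split as $-\eta_*\pm\gamma/\sqrt{\alpha}+\mathrm{O}(\gamma^2)$ --- together with the absence of a resonance (Hypothesis~\ref{hyp: resonances}), which removes the would-be $\gamma^{-1}$ singularity; the remaining, now bounded-in-$\gamma$, resolvent kernel, integrated against data in $L^2_r$ with $r<\tfrac{1}{2}$ (which fails to be square-integrable on $x\ge1$), produces the quantified blow-up
\begin{equation*}
\big\|(\mcl-\gamma^2)^{-1}\big\|_{\mathcal{B}(L^2_r,\,H^{2m}_s)}\lesssim|\gamma|^{-\beta},\qquad\gamma\to0,
\end{equation*}
for any $\beta>\tfrac{1}{2}-r$; the restriction $s<r-2$, equivalently $\beta<-s-\tfrac{3}{2}$, is exactly what is needed for the remainder in this expansion --- in particular the subtraction of the linearly growing solution $\psi$ of $\mcl\psi=0$ --- to be controlled in $H^{2m}_s\hookrightarrow H^1_s$ uniformly in $\gamma$. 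Away from the origin, the standard Fredholm-border construction and Hypotheses~\ref{hyp: spreading speed}(ii)--(iii), \ref{hyp: left} and~\ref{hyp: resonances}, together with sectoriality of $\mcl$, give analyticity of $(\mcl-\lambda)^{-1}$ across the essential spectrum and $|\lambda|^{-1}$ decay at infinity. Feeding this into $e^{\mcl t}=\frac{1}{2\pi i}\int_\Gamma e^{\lambda t}(\mcl-\lambda)^{-1}\,d\lambda$ and collapsing $\Gamma$ onto the essential spectrum: the part away from $0$ decays exponentially, the short-time regime $t\lesssim1$ is absorbed by parabolic smoothing ($\|e^{\mcl t}\|_{L^2\to H^1}\lesssim t^{-1/(2m)}$), and the near-origin piece --- obtained by wrapping $\Gamma$ around the branch cut --- is bounded by $\int_0^\delta e^{-rt}r^{-\beta/2}\,dr\lesssim(1+t)^{-(1-\beta/2)}$, the integral converging precisely because $\beta<2$. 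This gives the linear estimate.

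With both ingredients in hand, the theorem follows from a contraction mapping for the Duhamel formula in $X=\{w:\|w\|_X:=\sup_{t\ge0}(1+t)^{1-\beta/2}\|w(t)\|_{H^1_s}<\infty\}$ on a ball of radius $\sim\eps$. The linear term is bounded by $C(1+t)^{-(1-\beta/2)}\|\omega v_0\|_{H^1_r}$ directly from the linear estimate. For the integral term, since $-s>\tfrac{3}{2}$, exponential localization of $\mathcal{N}$ gives $\|\mathcal{N}(w(\tau))\|_{H^1_{-s}}\lesssim\|w(\tau)\|_{H^1_s}^2\lesssim\|w\|_X^2(1+\tau)^{-(2-\beta)}$, and using the \emph{sharp} $t^{-3/2}$ linear decay $\|\omega e^{\mcl\sigma}\omega^{-1}\|_{\mathcal{B}(H^1_{-s},H^1_s)}\lesssim(1+\sigma)^{-3/2}$ underlying Theorem~\ref{t: stability} (applicable because $-s>\tfrac{3}{2}$), together with short-time smoothing near $\sigma=0$,
\begin{equation*}
\Big\|\int_0^t\omega e^{\mcl(t-\tau)}\omega^{-1}\mathcal{N}(w(\tau))\,d\tau\Big\|_{H^1_s}\lesssim\|w\|_X^2\int_0^t(1+t-\tau)^{-3/2}(1+\tau)^{-(2-\beta)}\,d\tau\lesssim\|w\|_X^2\,(1+t)^{-(1-\beta/2)},
\end{equation*}
where the last step uses $2-\beta>0$ and $\min\!\big(\tfrac{3}{2},\,2-\beta\big)\ge1-\tfrac{\beta}{2}$, both valid for $\beta<2$. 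An identical estimate on differences --- the nonlinearity being smooth and vanishing to second order in $w$ --- makes the Duhamel map a contraction on a small ball in $X$ for $\eps$ small; its unique fixed point is the solution (local existence and uniqueness following from analytic-semigroup theory for the sectorial operator $\mcl$), and $\|w\|_X\le C\eps$ is precisely the claimed bound. As a consistency check, letting $r\uparrow\tfrac{1}{2}$ permits $\beta\downarrow0$, and the rate $(1+t)^{-(1-\beta/2)}$ approaches $(1+t)^{-1}$, matching the boundary case of Theorem~\ref{t: stability resolvent Holder}.
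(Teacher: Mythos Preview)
Your overall architecture matches the paper's: a quantified blow-up estimate for $(\mcl-\gamma^2)^{-1}$ near the branch point in $\mathcal{B}(L^2_r,H^{2m-1}_s)$, translated into algebraic semigroup decay, followed by a direct nonlinear closing that exploits the exponential localization of the nonlinearity to invoke the sharp $t^{-3/2}$ estimate on the Duhamel term. The nonlinear step you sketch (contraction in a time-weighted space) is equivalent to the paper's continuation argument with $\Theta(t)=\sup_{s\le t}(1+s)^{1-\beta/2}\|p(s)\|_{H^1_s}$; your convolution inequality and its justification via $\min(3/2,\,2-\beta)\ge 1-\beta/2$ are correct.

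The genuine difference is the contour. The paper does \emph{not} collapse onto the essential spectrum here. It explicitly notes that since the resolvent now blows up at $\gamma=0$, one cannot push the contour to the spectrum as in the H\"older case; instead it runs a shrinking circular arc $\Gamma^0_t=\{\lambda=c_0 t^{-1}e^{i\phi}\}$ (the classical analytic-semigroup ``keyhole'') and rescales $\xi=\lambda t$, so that the blow-up $\|(\mcl-\xi/t)^{-1}\|\lesssim (t/|\xi|)^{\beta/2}$ combines with the Jacobian $t^{-1}$ to give $t^{-(1-\beta/2)}$. Crucially, this only requires the blow-up estimate in a cone $\Re\gamma\ge\tfrac12|\Im\gamma|$, which is exactly what Proposition~\ref{p: resolvent blowup} delivers. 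Your branch-cut integral $\int_0^\delta e^{-rt}r^{-\beta/2}\,dr$ yields the same rate, but to justify it you would need the $|\gamma|^{-\beta}$ bound uniformly as $\gamma$ approaches the essential spectrum (i.e.\ the imaginary $\gamma$-axis), which the paper neither states nor proves; the pointwise kernel bounds in Lemma~\ref{l: blowup resolvent pointwise estimates} are asserted only in the cone. So your contour step, as written, has a gap relative to what has been established. It is plausibly fillable by sharpening the kernel estimates, but the paper's keyhole contour sidesteps the issue entirely and is the cleaner route. A minor point: the appearance of $\psi$ in your description of the blow-up expansion is out of place---$\psi$ enters only at the next order (Lemma~\ref{l: R1}, used for Theorem~\ref{t: asymptotics}), not in the bound $\|(\mcl-\gamma^2)^{-1}\|\lesssim|\gamma|^{-\beta}$.
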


Note, choosing $r \gtrsim - \frac{3}{2}$ and $s \lesssim - \frac{7}{2}$, the optimal choice for $\beta$ is $\beta \lesssim 2$, thereby giving arbitrarily slow algebraic decay. For the remainder of the paper, we assume Hypothesis \ref{hyp: spreading speed} through \ref{hyp: resonances} hold. 

\subsection{Preliminaries, notation, and remarks}\label{ss: remarks} \hfill

\noindent \textbf{General exponential weights.} In our analysis of the resolvent, we will use exponential weights on the right to move the essential spectrum of $\mcl$ in order to regain Fredholm properties at the origin. Given $\eta \in \R$, we let $\omega_\eta$ be a smooth positive weight function satisfying 
\begin{align}
\omega_{\eta} (x) = \begin{cases}
e^{\eta x}, & x \geq 1, \\
1, & x \leq -1. 
\end{cases}
\end{align}
Given a non-negative integer $k$, we define the exponentially weighted Sobolev space $H^k_{\mathrm{exp}, \eta} (\R)$ through the norm 
\begin{align}
||g||_{H^k_{\mathrm{exp}, \eta}} = ||\omega_{\eta} g||_{H^k}. 
\end{align}
If $k = 0$, we write $H^0_{\mathrm{exp}, \eta} (\R) = L^2_{\mathrm{exp}, \eta} (\R)$.

\begin{figure}
	\centering
	\includegraphics[width = .9\textwidth]{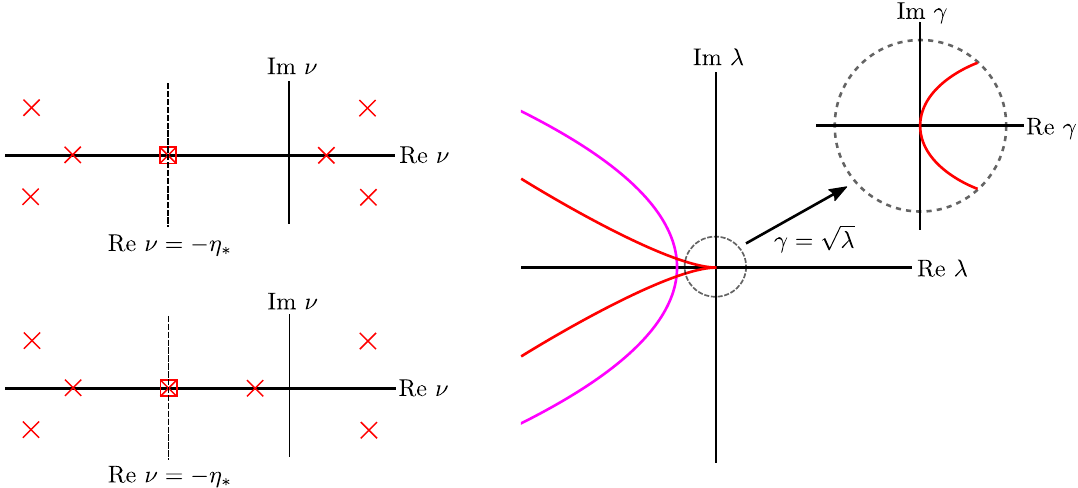}
	\caption{Left: the two possibilities for the location of the spatial eigenvalues $\nu$ of the asymptotic system at $+\infty$ for $\lambda = 0$, according to Hypothesis \ref{hyp: spreading speed}. The red square around the spatial eigenvalue at $\nu = -\eta_*$ indicates the presence of a Jordan block there. Right: Fredholm borders of $\mcl$ associated to $+\infty$ (red) and $-\infty$ (magenta); the inset shows the image of a neighborhood of the origin under the map $\gamma = \sqrt{\lambda}$.  }
	\label{fig: spatial evals and spec}
\end{figure} 

\noindent \textbf{Spectrum of the linearization.} We say $\lambda \in \C$ is in the essential spectrum of an operator $A$ if $A - \lambda$ is not an index zero Fredholm operator. The assumptions that $f'(0) > 0$ and $f'(1) < 0$ imply that the critical front $q_*$ converges to its limits exponentially quickly, so the coefficients of $\mathcal{A}$ attain limits exponentially quickly as $x \to \pm \infty$. By Palmer's theorem, the essential spectrum of $\mathcal{A}$ is determined by the asymptotic dispersion relations. The dispersion curves $\Sigma^\pm$, given in \eqref{e: right dispersion curve} and \eqref{e: left dispersion curve}, are the \textit{Fredholm borders} of $\mathcal{A}$: $\mathcal{A} - \lambda$ is Fredholm if and only if $\lambda \notin \Sigma^+ \cup \Sigma^-$. Due to well-posedness of the underlying PDE, this implies that $A - \lambda$ is Fredholm index zero if $\lambda$ is to the right of $\Sigma^+ \cup \Sigma^-$, and hence the dispersion curves give a sharp upper estimate of the location of the essential spectrum. 

Locating the essential spectrum in an exponentially weighted space with weight $\omega_\eta$ is equivalent to studying the spectrum of the conjugate operator $\omega_\eta \mathcal{A} \omega^{-1}_\eta$ in $L^2$, since multiplication by $\omega_\eta$ is an isomorphism from $L^2_{\mathrm{exp}, \eta} (\R)$ to $L^2 (\R)$. Operators of this form still have exponentially asymptotic coefficients, but conjugation by the weight changes the limits at $\pm \infty$ and hence moves the essential spectrum. Using the exponential weight $\omega = \omega_{\eta_*}$ defined in \eqref{e: omega def}, the limiting operators at $\pm \infty$ are
\begin{align}
\mcl_+ &= \mathcal{P}(\partial_x - \eta_*) + c_* (\partial_x - \eta_*) + f'(0), \label{e: lplus def} \\
\mcl_- &= \mathcal{P}(\partial_x) + c_* \partial_x + f'(1). 
\end{align}
One finds that the right dispersion curve for $\mcl = \omega \mathcal{A} \omega^{-1}$ is 
\begin{align}
\Sigma^+_{\eta_*} = \{ \lambda \in \C: d^\pm_{c_*} (\lambda, \nu) = 0 \text{ for some } \nu \in \C \text{ with } \Re \nu = - \eta_*\}. 
\end{align}
Hypothesis \ref{hyp: spreading speed} then guarantees that this choice of $\eta_*$ pushes the essential spectrum as far left as possible (due to the presence of absolute spectrum \cite{AbsoluteSpecArndBjorn} at the origin), and that with this choice of weight, the spectrum of $\mcl$ touches the imaginary axis at the origin and nowhere else. See the right panel of Figure \ref{fig: spatial evals and spec} for a depiction of the Fredholm borders of $\mcl$, and see \cite{FiedlerScheel, KapitulaPromislow} for further details on the essential spectrum of operators of this type. 

\noindent \textbf{Spatial eigenvalues and asymptotics of the front.} When one writes the traveling wave equation \eqref{e: TW ODE} as a first order system with coordinates $Q = (q, q', ..., q^{(2m-1)})$ and linearizes about the equilibrium $Q = 0$, obtaining an equation $Q' = AQ$, Hypothesis \ref{hyp: spreading speed} implies that the matrix $A$ has a Jordan block of length two at $\nu = - \eta_*$ \cite{HolzerScheelPointwiseGrowth}. If there are no slower-decaying stable eigenvalues, that is, if
\begin{align}
- \eta_* = \max \{ \Re \nu : \nu \in \sigma (A) \text{ with } \Re \nu < 0 \}, \label{e: spatial eval gap}
\end{align}
then, counting the dimensions of stable and unstable manifolds, one expects that the critical front $q_*$, solving \eqref{e: TW ODE} with $c = c_*$, is locally unique up to translation invariance, and that it inherits the decay rate from the Jordan block, that is
\begin{align}
q_* (x) \sim x e^{-\eta_* x}, \quad x \to \infty. \label{e: front asymptotics infty}
\end{align}
This is the situation pictured in the top left panel of Figure \ref{fig: spatial evals and spec}. Since we are assuming $\mcl$ has no resonances, \eqref{e: front asymptotics infty} must hold in this case, since otherwise we would have $|q_*'(x)| \leq C e^{- \eta_* x}$ for $x$ large, which would imply that $\mcl$ has a resonance at $\lambda = 0$. 

On the other hand, if $A$ has another eigenvalue $\nu$ with $- \eta_* < \Re \nu < 0$, as pictured in the bottom left panel of Figure \ref{fig: spatial evals and spec}, then one expects that fronts with speed $c_*$ come in a two-parameter family, with one parameter arising from translation invariance. Typically these fronts decay exponentially as $x \to \infty$ but with a rate slower than $-\eta_*$. In this case, our results apply to any of these fronts in this two-parameter family. 

\noindent \textbf{Exponential expansions and uniqueness of the front.} Solutions to the equation $\mcl u =0$ have \textit{exponential expansions}, in the sense that solutions which are at most linearly growing at infinity have the form
\begin{align*}
u(x) = \chi_+ (x) (\mu_0 + \mu_1 x) + w(x),
\end{align*}
where $\chi_+$ is a smooth positive cutoff function satisfying 
\begin{align}
\chi_+ (x) = \begin{cases}
0, &x \leq 2 \\
1, &x \geq 3,
\end{cases} \label{e: chiplus}
\end{align}
and $w$ is exponentially localized. This decomposition follows from the presence of a Jordan block at the origin when writing $\mcl_+ u = 0$ as a first-order system, with the rest of the eigenvalues away from the imaginary axis. From this characterization, we conclude that there is a unique solution to $\mcl u = 0$ which is linearly growing at $+\infty$, up to a constant multiple: otherwise, a linear combination of two distinct solutions would give rise to a resonance at $\lambda = 0$. This justifies the claim of uniqueness of $\psi$ in the statement of Theorem \ref{t: asymptotics}. 

Furthermore, if \eqref{e: spatial eval gap} holds, then $\omega q_*'$ is linearly growing at $\infty$, by \eqref{e: front asymptotics infty}. Since $\mcl (\omega q_*') = 0$ by translation invariance of \eqref{e: eqn}, we conclude that in this case we have $\psi = \omega_{\eta_*} q_*'$ (fixing the constant multiple appropriately). 

\noindent \textbf{Threshold for asymptotic stability.} We note that Theorem \ref{t: stability resolvent blowup} is sharp in the sense that asymptotic stability is no longer true for initial data in $H^1_r(\R)$ with $r < -\frac{3}{2}$, and accordingly the algebraic decay rate in Theorem \ref{t: stability resolvent blowup} goes to zero as $r \to -\frac{3}{2}^+$. On the linear level, this can be seen from the fact that $\psi \in H^1_r (\R)$ for $r < -\frac{3}{2}$, and $e^{\mcl t} \psi = \psi$ since $\mcl \psi = 0$. On the nonlinear level, if \eqref{e: spatial eval gap} holds, then using the asymptotics \eqref{e: front asymptotics infty}, one sees that using a small shift of the critical front as an initial condition is a perturbation which is small in $H^1_r (\R)$ for $r < -\frac{3}{2}$. The shifted front is still an equilibrium solution, so asymptotic stability does not hold for the nonlinear equation. 

\noindent \textbf{More general equations.} Since we already control all derivatives up to order $2m-1$ in our linear decay estimates in Proposition \ref{p: linear time decay}, our results readily extend to the general semilinear case, where $f = f(u, u_x, ..., \partial_x^{2m-1} u)$. With mostly editorial modifications, our methods should also apply to systems of semilinear parabolic equations. We focus on the scalar case with $f = f(u)$ here for clarity of presentation. 

\noindent \textbf{Additional notation.} For two Banach spaces $X$ and $Y$, we let $\mathcal{B}(X,Y)$ denote the space of bounded linear operators from $X$ to $Y$, with the operator norm topology. For $\delta> 0$, we let $B(0, \delta)$ denote the ball centered at the origin in $\C$ with radius $\delta$. 

\noindent \textbf{Outline.} The remainder of the paper is organized as follows. We first focus on the necessary ingredients for the proofs of Theorems \ref{t: stability} and \ref{t: asymptotics}, to clearly demonstrate our approach for analyzing the resolvent. We start by analyzing the resolvent of the limiting operator $(\mcl_+ - \gamma^2)^{-1}$ in Section \ref{sec: asymptotic resolvents}, by obtaining pointwise estimates on the integral kernel for this resolvent. In Section \ref{s: full resolvent}, we then transfer our estimates to the full resolvent $(\mcl - \gamma^2)^{-1}$, by decomposing our data and solution into left, right, and center pieces, solving the left and right pieces with the asymptotic operators, and using a far-field/core decomposition as developed in \cite{PoganScheel} to solve the center piece. 

In Section \ref{s: semigroup estimates}, we construct the semigroup $e^{\mcl t}$ via a contour integral, and use our resolvent estimates to obtain sharp decay rates and an asymptotic expansion for large time for this semigroup through a careful choice of the integration contour. With these linear decay estimates in hand, we establish nonlinear stability in Section \ref{s: nonlinear stability} via a direct argument, proving Theorem \ref{t: stability} -- the principle challenge in this problem is in obtaining optimal linear estimates, rather than handling the nonlinearity. In Section \ref{s: nonlinear asymptotics}, we again use a direct argument to transfer large time asymptotics for the semigroup $e^{\mcl t}$ to asymptotics for the solution for the nonlinear equation, proving Theorem \ref{t: asymptotics}. In Section \ref{s: lower localization}, we describe the modifications necessary to handle less localized initial conditions, proving Theorems \ref{t: stability resolvent Holder} and \ref{t: stability resolvent blowup}. We conclude in Section \ref{s: discussion} by giving examples of systems to which our results apply and discussing some subtleties surrounding our assumptions. 

\noindent \textbf{Acknowledgements.} This material is based upon work supported by the National Science Foundation through the Graduate Research Fellowship Program under Grant No. 00074041, as well as through NSF-DMS-1907391. Any opinions,
findings, and conclusions or recommendations expressed in this material are those of the
authors and do not necessarily reflect the views of the National Science Foundation.

\section{Resolvents for asymptotic operators}\label{sec: asymptotic resolvents}
In this section, we establish regularity properties in $\lambda$ for the resolvents $(\mathcal{L}_\pm - \lambda)^{-1}$ of the limiting operators. Since the dispersion relation has a degree 2 branch point at the origin, roots of the dispersion relation are therefore analytic functions of $\gamma = \sqrt{\lambda}$ near $\gamma = 0$, and so we study regularity in $\gamma$ near this branch point. We choose the branch cut along the negative real axis, so that $\Re \gamma > 0$. We let $R^+ (\gamma) = (\mcl_+ - \gamma^2)^{-1}$. The key result of this section is the following proposition, which gives expansions for $R^+(\gamma)$ to finite order in $\gamma$, depending on the amount of algebraic localization required, when restricting to odd functions.  

\begin{prop}\label{p: right resolvent estimate}
	Let $r > 3/2$. There is a limiting operator $R^+_0$, which is a bounded operator from $L^2_{s,s} (\R)$ to $H^{2m-1}_{-r, -r} (\R)$ for any $s > \frac{1}{2}$, and a constant $C > 0$ such that for any odd function $g \in L^2_{r, r} (\R)$, we have 
	\begin{align}
	\|(R^+(\gamma)-R^+_0) g\|_{H^{2m-1}_{-r, -r}} \leq C |\gamma| \|g\|_{L^2_{r,r}} 
	\end{align}
	for all $\gamma$ sufficiently small with $\gamma^2$ to the right of $\Sigma^+_{\eta_*}$. 
	
	If $r > 5/2$, then in addition there is an operator $R^+_1 : L^2_{r, r} (\R) \to H^{2m-1}_{-r, -r} (\R)$ and a constant $C > 0$ such that for any odd function $g \in L^2_{r,r} (\R)$, we have
	\begin{align}
	\|(R^+(\gamma) - R^+_0 - \gamma R^+_1) g\|_{H^{2m-1}_{-r, -r}} \leq C |\gamma|^2 \|g\|_{L^2_{r,r}} \label{e: right second order expansion}
	\end{align}
	for all $\gamma$ sufficiently small with $\gamma^2$ to the right of $\Sigma^+_{\eta_*}$. 
\end{prop}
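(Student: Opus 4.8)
The plan is to compute $R^+(\gamma)$ explicitly via the Fourier transform (or equivalently by solving the constant-coefficient ODE $(\mcl_+ - \gamma^2)u = g$), and then extract the singular part near $\gamma = 0$ by tracking the spatial eigenvalue $\nu(\gamma)$ which bifurcates from the double root at $\nu = -\eta_*$. Since the weight $\omega = \omega_{\eta_*}$ has been applied, $\mcl_+ = \mathcal{P}(\partial_x - \eta_*) + c_*(\partial_x - \eta_*) + f'(0)$, and the characteristic equation $\mcl_+(\nu) = \gamma^2$ is exactly $d^+(\gamma^2, \nu - \eta_*) = 0$. By Hypothesis \ref{hyp: spreading speed}(i), near the origin this reads $\alpha\nu^2 - \gamma^2 + \mathrm{O}(\nu^3) = 0$, so by the Weierstrass preparation theorem / implicit function theorem there are exactly two small roots $\nu_\pm(\gamma) = \pm \gamma/\sqrt{\alpha} + \mathrm{O}(\gamma^2)$, analytic in $\gamma$, with $\Re\nu_-(\gamma) < 0 < \Re\nu_+(\gamma)$ once $\gamma^2$ lies strictly to the right of $\Sigma^+_{\eta_*}$ (this is where the branch cut along $\R_{\le 0}$ and $\Re\gamma > 0$ gets used). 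All other spatial eigenvalues of $\mcl_+ - \gamma^2$ stay uniformly bounded away from the imaginary axis, by Hypothesis \ref{hyp: spreading speed}(ii)--(iii), so the corresponding part of the Green's function is analytic in $\gamma$ and contributes harmlessly to every order of the expansion.

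Next I would write the Green's function $G_\gamma(x,y)$ for $R^+(\gamma)$ as a sum over spatial eigenvalues: the "regular" modes give a piece that is jointly smooth in $\gamma$ and exponentially decaying in $|x-y|$, hence bounded $L^2_{s,s} \to H^{2m-1}_{-r,-r}$ uniformly, while the two small roots $\nu_\pm(\gamma)$ contribute a piece of the schematic form $c(\gamma)\, e^{\nu_+(\gamma)(x-y)}$ for $x<y$ (the mode that decays to the right after re-weighting, i.e. the one that is integrable against the cutoff at $+\infty$) and $c(\gamma)\, e^{\nu_-(\gamma)(x-y)}$ for $x>y$, where the prefactor $c(\gamma)$ behaves like $1/(\nu_+(\gamma) - \nu_-(\gamma)) \sim \sqrt{\alpha}/(2\gamma)$ — this is the resonant blow-up. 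The crucial cancellation is the restriction to \emph{odd} $g$: when $g$ is odd, the symmetric/constant ($\gamma$-independent, $\mathrm{O}(1/\gamma)$) term in the expansion of $e^{\nu_\pm(\gamma)(x-y)}/(\nu_+-\nu_-)$ integrates to zero against $g$, leaving only the regular remainder. Concretely, expanding $c(\gamma) e^{\nu_\pm(\gamma)(x-y)} = \frac{a_{-1}}{\gamma}\big(1 + \gamma b_1(x-y) + \gamma^2 b_2(x-y)^2 + \cdots\big)$, the $\frac{a_{-1}}{\gamma}$ term is killed by oddness of $g$ (its kernel is independent of $y$ on the relevant half-line, so it pairs to $\int g = 0$ up to the cutoff — one must be slightly careful with the cutoff $\chi_+$ and handle the tail, which is where the $r > 3/2$ resp. $r > 5/2$ integrability thresholds enter: $\langle y\rangle^{-r}$ must be integrable, and for the second-order expansion $\langle y \rangle^{1-r}$ must be, roughly). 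What survives at leading order defines $R^+_0$ (the $\gamma^0$ coefficient), the next term defines $\gamma R^+_1$, and the tail is $\mathrm{O}(|\gamma|)$ resp. $\mathrm{O}(|\gamma|^2)$ in operator norm.

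Then I would assemble the estimates: (1) establish the analytic expansion $\nu_\pm(\gamma) = \pm\gamma/\sqrt\alpha + \mathrm{O}(\gamma^2)$ and the analyticity of the prefactors and of the regular modes, uniformly for $\gamma$ small with $\gamma^2$ to the right of $\Sigma^+_{\eta_*}$; (2) write $G_\gamma = G_\gamma^{\mathrm{reg}} + G_\gamma^{\mathrm{sing}}$ and Taylor-expand $G_\gamma^{\mathrm{sing}}$ in $\gamma$ with remainder; (3) check the mapping properties $L^2_{s,s} \to H^{2m-1}_{-r,-r}$ of each coefficient operator by direct kernel estimates — the key point being that the kernel of $R^+_0$ grows at most linearly in $x$ (coming from the $b_1(x-y)$ term, whence the target weight $-r$ with $r>3/2$ makes it $L^2$, i.e. $\langle x\rangle^{1-r} \in L^2$ needs $r > 3/2$), and for $R^+_1$ the kernel grows at most quadratically (needing $r > 5/2$); the $H^{2m-1}$ regularity is free since differentiating the kernel in $x$ up to order $2m-1$ only brings down bounded factors of $\nu$'s; and (4) use oddness of $g$ to cancel the singular constant term and conclude. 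The main obstacle I anticipate is step (2)–(4) bookkeeping for the $x>y$ versus $x<y$ cases together with the cutoff $\chi_+$: the Green's function is only "$\omega q_*'$-like" on the half-line $x \gg 1$, and one has to verify that the oddness cancellation and the growth-rate accounting survive the transition region and the left half-line (where the relevant spatial eigenvalues are the full set for $\mcl_+$, not just the small pair). A secondary technical point is making "to the right of $\Sigma^+_{\eta_*}$" quantitative enough that $\Re(\nu_-(\gamma) - \nu_+(\gamma)) < 0$ with a gap, so that the exponentials $e^{\nu_\pm(\gamma)(x-y)}$ genuinely decay on the correct half-lines and the kernel bounds are uniform in $\gamma$.
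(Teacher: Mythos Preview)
Your overall strategy is right and matches the paper's: construct the Green's function of the constant-coefficient operator $\mcl_+$, split off the contribution of the two small spatial eigenvalues $\nu^\pm(\gamma) = \pm\gamma/\sqrt{\alpha} + \mathrm{O}(\gamma^2)$ (which carry the $1/\gamma$ singularity from the collapsing gap $\nu^+-\nu^-$), treat the strongly hyperbolic modes as analytic in $\gamma$ and exponentially localized, and use oddness of $g$ to cancel the singularity. The paper does exactly this, decomposing $G^+_\gamma = G^{\mathrm{heat}}_\gamma + (G^c_\gamma - G^{\mathrm{heat}}_\gamma) + \tilde G^c_\gamma + G^h_\gamma$.

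There is, however, a genuine gap in how you exploit oddness. You use it only once, to kill the constant $a_{-1}/\gamma$ term via $\int g = 0$, and then Taylor-expand the rest. The remainder from that naive expansion has a kernel bounded by $C|\gamma|\,|x-y|^2$, and a Cauchy--Schwarz estimate with that growth forces $r>5/2$, not $r>3/2$, for the first-order bound. The paper obtains the sharp threshold by using oddness \emph{structurally}: it folds the convolution to the half-line and works with the Dirichlet-type kernel
\[
G^{\mathrm{odd}}_\gamma(x,y) \;=\; \frac{1}{\gamma}\Bigl(e^{-\nu_0\gamma|x-y|} - e^{-\nu_0\gamma|x+y|}\Bigr),
\]
for which one proves directly (Lemma~\ref{l: dirichlet resolvent}) that $|G^{\mathrm{odd}}_\gamma(x,y) - 2\nu_0\min(x,y)| \leq C|\gamma|\langle x\rangle\langle y\rangle$. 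The point is that this remainder has \emph{product} growth $\langle x\rangle\langle y\rangle$ rather than $|x-y|^2 \sim \langle x\rangle^2 + \langle y\rangle^2$, and it is precisely this product structure that makes the double integral $\int\!\!\int \langle x\rangle^2\langle y\rangle^2(\langle x\rangle\langle y\rangle)^{-2r}\,dx\,dy$ finite already for $r>3/2$. Your $\int g = 0$ trick is the zeroth-order shadow of this fold; to close the argument at the stated localization you must use the full subtraction $e^{-\nu_0\gamma|x-y|}-e^{-\nu_0\gamma|x+y|}$, not just cancel the leading constant.

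A minor point: you repeatedly invoke the cutoff $\chi_+$ and worry about a ``transition region'' and ``left half-line''. None of that belongs to this proposition --- $\mcl_+$ is constant-coefficient on all of $\R$, and the cutoff enters only later (Section~\ref{s: full resolvent}) when the asymptotic resolvent is glued into the full one.
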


To prove this, we construct the Green's function for the resolvent equation via a reformulation as a first order system. Hypothesis \ref{hyp: spreading speed} will guarantee that the dynamics in this system are to leading order the same as for the system corresponding to the heat equation on the real line. Restricting to odd initial data then improves the regularity of the resolvent by introducing effective absorption into the system. Since the equation $(\mcl_+ - \gamma^2) u = g$ has constant coefficients, the solution operator is given by convolution with a Green's function $G^+_\gamma$, which solves 
\begin{align}
(\mcl_+ - \gamma^2) G^+_\gamma = - \delta_0,
\end{align}
where $\delta_0$ is the Dirac delta distribution supported at the origin. We now write $\mcl_+$ as 
\begin{align}
\mcl_+ = \sum_{k=2}^{2m} c_k \partial_x^{k}. 
\end{align}
As in \cite{HolzerScheelPointwiseGrowth}, we recast $(\mcl_+ - \gamma^2) u = g$ as a first-order system in $U = (u, \partial_x u, ..., \partial_x^{2m-1} u)$, and find
\begin{align}
\partial_x U = M(\gamma) U + F,
\end{align}
where $F = (0, 0, ..., 0, g)^T$, and $M(\gamma)$ is a $2m$-by-$2m$ matrix
\begin{equation}
M(\gamma) = \begin{bmatrix}
0 & 1 & 0 & \dots & 0 \\
0 & 0 & 1 & \dots & 0 \\
\vdots & \vdots & 0 & \hspace{-0.7cm} \ddots & \vdots \\
0 & \dots & 0 & 1 & 0 \\
0 & \dots & \dots & 0 & 1 \\
\gamma^2/c_{2m} & 0 & -c_2/c_{2m} & \dots & - c_{2m-1}/c_{2m} 
\end{bmatrix}. \label{e: matrix formula}
\end{equation}

By Palmer's theorem \cite{FiedlerScheel, KapitulaPromislow}, if $\gamma^2$ is to the right of the essential spectrum $\Sigma^+_{\eta_*}$, then $M(\gamma)$ is a hyperbolic matrix, with stable and unstable subspaces $E^{\mathrm{s/u}} (\gamma)$ satisfying $\dim E^s (\gamma) = \dim E^u (\gamma)$. We let $P^\mathrm{s} (\gamma)$ and $P^\mathrm{u} (\gamma) = I-P^\mathrm{s}(\gamma)$ denote the corresponding spectral projections onto these subspaces. The matrix Green's function $T_\gamma$ for this system solves 
\begin{align}
(\partial_x - M(\gamma)) T_\gamma = - \delta_0 I,
\end{align}
where $I$ is the identity matrix of size $2m$-by-$2m$. The matrix Green's function is given by 
\begin{align}
T_\gamma (x) = \begin{cases}
- e^{M(\gamma) x} P^\mathrm{s} (\gamma), & x > 0 \\
e^{M(\gamma) x} P^\mathrm{u} (\gamma), & x < 0. 
\end{cases}
\end{align}
The scalar Green's function $G_\gamma$ is recovered from $T_\gamma$ through 
\begin{align}
G_\gamma^+ = P_1 T_\gamma Q_1 c_{2m}^{-1}, \label{e: scalar Green fcn}
\end{align}
where $P_1$ is the projection onto the first component and $Q_1$ is the embedding into the last component, i.e. $P_1 (u_1, ..., u_{2m}) = u_1$ and $Q_1 g = (0, ..., 0, g)^T$. From these formulas, since $M(\gamma)$ is analytic in $\gamma^2$, we see that the only obstructions to regularity in $\gamma$ of $G_\gamma^+$ are singularities in the projections $P^{\mathrm{s/u}}(\gamma)$. Such a singularity does occur: the structure of $M(\gamma)$, arising from writing a scalar equation as a first-order system, implies that 
\begin{align}
\det (M(\gamma) - \nu) = d^+(\gamma^2, \nu-\eta_*). 
\end{align}
Hence the spatial eigenvalues $\nu$ of $M(\gamma)$ are roots of the dispersion relation, satisfying
\begin{align}
0 = d^+(\gamma^2, \nu - \eta_*) = \alpha \nu^2 - \gamma^2 + \mathrm{O}(\nu^3), 
\end{align}
with $\alpha > 0$. Solving near the origin with the Newton polygon, one finds two solutions bifurcating from the origin, given by 
\begin{align}
\nu^{\pm} (\gamma) = \pm\frac{1}{\sqrt{\alpha}} \gamma + \mathrm{O}(\gamma^2). \label{e: spatial evals}
\end{align}
As $\gamma$ approaches zero from the right of the essential spectrum, $\nu^{\pm}$ merge to form a $2$-by-$2$ Jordan block to the eigenvalue zero, necessarily giving rise to a singularity in $P^{\mathrm{s/u}} (\gamma)$ \cite{Kato}. With the Newton polygon, one readily finds that these are the only eigenvalues of $M(\gamma)$ near the origin for $\gamma$ small. 

We therefore isolate the singularity by splitting the projections as 
\begin{align}
P^\mathrm{s/u}(\gamma) = P^{\mathrm{cs/cu}} (\gamma) + P^{\mathrm{ss/uu}} (\gamma),
\end{align}
for $\gamma^2$ to the right of the essential spectrum, where $P^{\mathrm{cs/cu}} (\gamma)$ are the spectral projections onto the one-dimensional eigenspaces associated to $\nu^\pm(\gamma)$, respectively, and $P^{\mathrm{ss/uu}} (\gamma)$ are the spectral projections onto the rest of the stable/unstable eigenvalues, respectively. Standard spectral perturbation theory \cite{Kato} implies that $P^{\mathrm{ss/uu}} (\gamma)$ are analytic in $\gamma^2$ for $\gamma$ small. We characterize the singularities of $P^{\mathrm{cs/cu}} (\gamma)$ in the following lemma.

\begin{lemma}\label{l: projection pole}
	The projections $P^{\mathrm{cs/cu}} (\gamma)$ have poles of order 1 at $\gamma = 0$, with expansions
	\begin{align}
	P^{\mathrm{cs/cu}} (\gamma) = \pm \frac{1}{\gamma} P_{-1} + \mathrm{O}(1)
	\end{align}
	near $\gamma = 0$. In particular, the poles in these expansions differ only by a sign. Furthermore, the top right entry of $P_{-1}$ is nonzero. We denote the remainder term by
	\begin{align}
	\tilde{P}^{\mathrm{cs/cu}}(\gamma) = P^\mathrm{cs/cu}(\gamma) \mp \frac{1}{\gamma} P_{-1}. 
	\end{align}
\end{lemma}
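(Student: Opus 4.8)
The plan is to compute the spectral projections $P^{\mathrm{cs/cu}}(\gamma)$ explicitly from the eigenvectors of $M(\gamma)$ associated to the two bifurcating eigenvalues $\nu^{\pm}(\gamma)$, and to track the order of vanishing of the relevant denominator as $\gamma \to 0$. Because $M(\gamma)$ is a companion matrix, its eigenvector for an eigenvalue $\nu$ is the Vandermonde-type vector $V(\nu) = (1, \nu, \nu^2, \dots, \nu^{2m-1})^T$, and the left eigenvector $W(\nu)^T$ can likewise be written down in closed form (its entries are, up to normalization, symmetric-function combinations of the other spatial eigenvalues, with last entry a nonzero constant). The rank-one spectral projection onto the $\nu^{\pm}(\gamma)$-eigenspace is then
\begin{align}
P^{\mathrm{cs/cu}}(\gamma) = \frac{V(\nu^{\pm}(\gamma)) \, W(\nu^{\pm}(\gamma))^T}{W(\nu^{\pm}(\gamma))^T V(\nu^{\pm}(\gamma))}. \nonumber
\end{align}
The numerator is analytic in $\gamma$ (since $\nu^{\pm}(\gamma) = \pm \gamma/\sqrt{\alpha} + \mathrm{O}(\gamma^2)$ are analytic in $\gamma$), so the entire singularity comes from the scalar denominator $N(\gamma) := W(\nu^{\pm}(\gamma))^T V(\nu^{\pm}(\gamma))$.

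The key computation is that $N(\gamma)$ vanishes to exactly first order at $\gamma = 0$. Up to a nonzero constant, $N(\gamma)$ equals $\partial_\nu d^+(\gamma^2, \nu - \eta_*)$ evaluated at $\nu = \nu^{\pm}(\gamma)$ — this is the standard fact that for a companion matrix the normalization factor in the spectral projection is the derivative of the characteristic polynomial at the eigenvalue. From the expansion \eqref{e: right dispersion curve}, $\partial_\nu d^+(\gamma^2, \nu - \eta_*) = 2\alpha \nu + \mathrm{O}(\nu^2)$, so evaluating at $\nu^{\pm}(\gamma)$ gives $N(\gamma) = \pm 2\sqrt{\alpha}\,\gamma + \mathrm{O}(\gamma^2)$. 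Hence $P^{\mathrm{cs/cu}}(\gamma)$ has a simple pole with residue
\begin{align}
P_{-1} = \frac{1}{2\sqrt{\alpha}} V(0) W(0)^T, \nonumber
\end{align}
where $V(0) = (1, 0, \dots, 0)^T$ and $W(0)^T = (w_1, \dots, w_{2m})$ with $w_{2m} \neq 0$. The signs work out so that the $+$ branch and $-$ branch residues differ only by a sign, because the leading term of $N(\gamma)$ is odd in $\gamma$ while the numerator $V(\nu^{\pm})W(\nu^{\pm})^T$ agrees at leading order ($=V(0)W(0)^T$) for both branches. The top-right entry of $P_{-1}$ is $(2\sqrt{\alpha})^{-1} w_{2m} \neq 0$, giving the final claim; the remainder $\tilde{P}^{\mathrm{cs/cu}}(\gamma)$ is then $\mathrm{O}(1)$ by construction, and in fact one can read off that it is analytic in $\gamma$ since $N(\gamma)/\gamma$ is analytic and nonvanishing near $0$.

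The main obstacle is being careful with the branch structure and the bookkeeping of which Vandermonde/left-eigenvector entries survive in the limit: one must verify that $W(0)$ is genuinely nonzero (in particular that the last entry does not vanish), that the two bifurcating eigenvalues are the only eigenvalues of $M(\gamma)$ colliding at $0$ so that the rank-one formula for $P^{\mathrm{cs/cu}}$ is valid for small $\gamma \neq 0$ (already established above via the Newton polygon), and that the pole is exactly order one rather than higher — this last point is exactly the statement that $\alpha \neq 0$ in Hypothesis \ref{hyp: spreading speed}(i), i.e. the double root is a genuine square-root branch point and not more degenerate. An alternative, more invariant route avoiding explicit eigenvectors is to use the Riesz projection $P^{\mathrm{cs/cu}}(\gamma) = \frac{1}{2\pi i}\oint (\zeta - M(\gamma))^{-1}\, d\zeta$ around a small loop enclosing $\nu^{\pm}(\gamma)$, expand $(\zeta - M(\gamma))^{-1}$ via Cramer's rule with denominator $\det(\zeta - M(\gamma)) = d^+(\gamma^2, \zeta - \eta_*)$, and extract the residue; this makes the appearance of $\partial_\nu d^+$ in the normalization transparent and confirms the order-one pole directly from \eqref{e: right dispersion curve}.
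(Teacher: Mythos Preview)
Your proof is correct and uses essentially the same underlying mechanism as the paper --- the normalization factor in the spectral projection onto a simple eigenspace is the derivative of the characteristic polynomial at that eigenvalue --- but packaged differently. The paper writes the projection via the Frobenius covariant (Lagrange interpolation) formula
\[
P^{\mathrm{cs}}(\gamma) = \prod_{k \neq 2} \frac{M(\gamma) - \nu_k(\gamma) I}{\nu^-(\gamma) - \nu_k(\gamma)},
\]
so that the lone singular factor is $(\nu^-(\gamma) - \nu^+(\gamma))^{-1} \sim -\tfrac{\sqrt{\alpha}}{2\gamma}$, and then verifies the top-right entry of the residue is nonzero by observing that the residue is a degree-$(2m-1)$ polynomial in $M(0)$ and only $M(0)^{2m-1}$ contributes to that entry. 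You instead write the rank-one projection as $V(\nu^{\pm})W(\nu^{\pm})^T / N(\gamma)$ using explicit companion-matrix eigenvectors and identify $N(\gamma)$ with $\partial_\nu d^+$; this makes the top-right entry immediately visible as $V(0)_1 \cdot W(0)_{2m}$, nonzero because the last entry of any left eigenvector of a companion matrix is a nonzero normalization constant. Your route is arguably cleaner on that point and sidesteps the bookkeeping of possible Jordan blocks among the hyperbolic eigenvalues $\nu_k$, $k\geq 3$, which the paper handles by repeating eigenvalues according to multiplicity. One small labeling slip: in the paper's conventions $P^{\mathrm{cs}}$ projects onto the $\nu^-$ (stable) eigenspace and $P^{\mathrm{cu}}$ onto $\nu^+$, so your $\pm$ pairing is reversed; this is harmless since the conclusion is symmetric up to the sign of the residue.
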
 
\begin{proof}
	Since for $\gamma$ nonzero $\nu^{\pm} (\gamma)$ are each algebraically simple eigenvalues of $M(\gamma)$, we can construct the projections onto their eigenspaces via Lagrange interpolation. This approach gives a formula sometimes known as the Frobenius covariant. We order the eigenvalues of $M(\gamma)$ as $(\nu_1(\gamma), \nu_2(\gamma), ..., \nu_{2m}(\gamma))$, repeating eigenvalues according to algebraic multiplicity if there are non-trivial Jordan blocks in the strong stable/unstable subspaces, with $\nu_1(\gamma) = \nu^+(\gamma)$ and $\nu_2(\gamma) = \nu^-(\gamma)$. The center stable projection is then given by \begin{align}
	P^\mathrm{cs}(\gamma) = \prod_{k = 1, k\neq 2}^{2m} \frac{1}{\nu^-(\gamma) - \nu_k(\gamma)} (M(\gamma) - \nu_k (\gamma) I).
	\end{align}
	Repeating the eigenvalues according to algebraic multiplicity guarantees that the right hand side annihilates all the other eigenspaces, and one can check that the normalization guarantees it gives the spectral projection. Since all the other eigenvalues are bounded away from zero for $\gamma$ small, the only singularity arises from the factor $(\nu^-(\gamma) - \nu^+(\gamma))^{-1}$. Using the fact that $\nu^-(\gamma) - \nu^+(\gamma) = - \frac{2}{\sqrt{\alpha}} \gamma + \mathrm{O}(\gamma^2)$, we write
	\begin{align*}
	\gamma P^\mathrm{cs}(\gamma) \big|_{\gamma = 0} = -\frac{\sqrt{\alpha}}{2} (M(0)-\nu^+(0) I) \prod_{k=3}^{2m} \frac{1}{-\nu_k(0)} (M(0)-\nu_k(0) I). 
	\end{align*}
	Note that this is a polynomial of degree $2m-1$ in $M(0)$. From the form of $M(\gamma)$ in \eqref{e: matrix formula}, one sees that the top right entry of $M(0)^{2m-1}$ is equal to 1, and the top right entry of $M(0)^{k}$ is zero for all $k < 2m-1$. Hence the top right entry of $\gamma P^\mathrm{cs}(\gamma) |_{\gamma = 0}$ is
	\begin{align}
	\beta := -\frac{\sqrt{\alpha}}{2} \prod_{k=3}^{2m} \left(-\frac{1}{\nu_k(0)}\right),  
	\end{align} 
	which is nonzero. Repeating the argument for 
	\begin{align*}
	P^\mathrm{cu} (\gamma) = \prod_{k = 2}^{2m} \frac{1}{\nu^+(\gamma) - \nu_k (\gamma)} (M(\gamma) - \nu_k (\gamma) I),
	\end{align*}
	one readily finds $\gamma P^\mathrm{cu}(\gamma) |_{\gamma = 0} = - \gamma P^{\mathrm{cs}} (\gamma)|_{\gamma = 0}$, completing the proof of the lemma. 
\end{proof}

We now use this result to expand the formula \eqref{e: scalar Green fcn} for $G_\gamma^+$. For $x \geq 0$, we have 
\begin{align*}
G_\gamma^+ (x) &= -P_1 e^{M(\gamma) x} (P^\mathrm{cs}(\gamma) + P^\mathrm{ss}(\gamma)) Q_1 c_{2m}^{-1} \\
  &= -c_{2m}^{-1} \frac{\beta}{\gamma} e^{\nu^-(\gamma) x} -c_{2m}^{-1} e^{\nu^- (\gamma) x} P_1 \tilde{P}^\mathrm{cs}(\gamma) Q_1  - c_{2m}^{-1} P_1 e^{M(\gamma) x} P^\mathrm{ss}(\gamma) Q_1,
\end{align*}
and for $x < 0$, we have 
\begin{align*}
G_\gamma^+ (x) = - c_{2m}^{-1} \frac{\beta}{\gamma} e^{\nu^+ (\gamma) x} + c_{2m}^{-1} e^{\nu^+ (\gamma) x} P_1 \tilde{P}^{\mathrm{cu}} (\gamma) Q_1 + c_{2m}^{-1} P_1 e^{M(\gamma) x} P^\mathrm{uu}(\gamma) Q_1. 
\end{align*}
The leading term is the only term which is singular in $\gamma$. Lemma \ref{l: projection pole} guarantees that this term has the same coefficient for $x \geq 0$ and $x < 0$. We now show that this term can be replaced by (essentially) the resolvent kernel for the heat equation, and that the remaining error terms can be controlled as well, so that the behavior is the same as for the resolvent in the heat equation. Let 
\begin{align}
G_\gamma^c (x) = \begin{cases}
-c_{2m}^{-1} \frac{\beta}{\gamma} e^{\nu^- (\gamma) x}, & x \geq 0 \\
-c_{2m}^{-1} \frac{\beta}{\gamma} e^{\nu^+ (\gamma) x}, & x < 0,
\end{cases}
\end{align}
and let 
\begin{align}
G_\gamma^{\mathrm{heat}} (x) = \begin{cases}
-c_{2m}^{-1} \frac{\beta}{\gamma} e^{-\nu_0 \gamma x}, & x \geq 0 \\
-c_{2m}^{-1} \frac{\beta}{\gamma} e^{\nu_0 \gamma x}, & x < 0,
\end{cases}
\end{align}
where $\nu_0 = \frac{1}{\sqrt{\alpha}}$. 
We separate the resolvent kernel into four pieces
\begin{align}
G_\gamma^+ = G_\gamma^\mathrm{heat} + (G^c_\gamma - G^\mathrm{heat}_\gamma)  + \tilde{G}_\gamma^c  + G^h_\gamma, \label{e: right greens fcn decomposition}
\end{align}
where $\tilde{G}^c_\gamma$ consists of the remainder term associated to the central spatial eigenvalues
\begin{align}
\tilde{G}_\gamma^c (x) = \begin{cases}
- c_{2m}^{-1} e^{\nu^- (\gamma) x} P_1 \tilde{P}^\mathrm{cs}(\gamma) Q_1, & x \geq 0 \\
c_{2m}^{-1} e^{\nu^+(\gamma) x} P_1 \tilde{P}^\mathrm{cu}(\gamma) Q_1, & x < 0, 
\end{cases}
\end{align}
and $G^h_\gamma$ is the piece associated to the hyperbolic projections, 
\begin{align}
G_\gamma^h (x) = \begin{cases}
- c_{2m}^{-1} P_1 e^{M(\gamma) x} P^\mathrm{ss} (\gamma) Q_1, & x \geq 0 \\
c_{2m}^{-1} P_1 e^{M(\gamma) x} P^\mathrm{uu} (\gamma) Q_1, & x < 0. 
\end{cases}
\end{align} 

This decomposition is natural in terms of $\gamma$ dependence, since it isolates the pieces of $G_\gamma^+$ which have a singularity at $\gamma = 0$. However, this decomposition is not natural from the point of view of spatial regularity: for $\gamma^2$ to the right of the essential spectrum, the total Green's function $G_\gamma^+$ belongs to $H^{2m-1} (\R)$, but for instance $G^\mathrm{heat}_\gamma$ is only in $H^1 (\R)$. In order to prove Proposition \ref{p: right resolvent estimate}, we will need estimates on derivatives of $G_\gamma^+$ up to order $2m-1$. Taking higher derivatives of the individual terms in the decomposition \eqref{e: right greens fcn decomposition} introduces terms involving the Dirac delta and its derivatives, since these terms have only one classical derivative at $x = 0$. However, because $G_\gamma^+ \in H^{2m-1} (\R)$, these distribution-valued terms arising from derivatives of $G^\mathrm{heat}_\gamma, G^c_\gamma - G^\mathrm{heat}_\gamma$, and $\tilde{G}^c_\gamma + G^h_\gamma$ up to order $2m-1$ must disappear when added together. Therefore, when estimating these derivatives, it suffices for our purposes to disregard the singular parts, as they give no contribution to the end result in Proposition \ref{p: right resolvent estimate}. 

In light of this, for any function $g \in H^{2m-1} (\R)$ which is smooth away from $x = 0$, for any integer $1 \leq k \leq 2m-1$, we define an operator $\tilde{\partial}_x^k$ returning only the regular part of the derivative, which is of course given by the piecewise derivative
\begin{align*}
\tilde{\partial}_x^k g(x) = \begin{cases}
\partial_x^k g(x), & x > 0, \\
\partial_x^k g(x), & x < 0.
\end{cases}
\end{align*}

In order to show that $G_\gamma^+$ behaves like the heat resolvent, we first estimate the difference $G^c_\gamma - G^\mathrm{heat}_\gamma$, showing that the difference is $\mathrm{O}(\gamma)$ and therefore can be absorbed into our error term.
\begin{lemma}\label{l: center to heat resolvent}
	Let $\delta > 0$ be small. There exists a constant $C > 0$ such that if $\gamma^2$ is to the right of the essential spectrum of $\mcl$ and $|\gamma| \leq \delta$, then for any integer $0 \leq k \leq 2m-1$,
	\begin{align}
	|\tilde{\partial}_x^k G^c_\gamma(x) - \tilde{\partial}_x^k G^\mathrm{heat}_\gamma (x)| \leq C |\gamma| \langle x \rangle.
	\end{align}
\end{lemma}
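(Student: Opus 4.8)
The plan is to substitute the explicit formulas, reduce to $x \ge 0$ by a reflection, and estimate the resulting difference of exponentials by a case split on the size of $|\gamma|^2 x$.

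For $x \ge 0$ and $0 \le k \le 2m-1$ one has
\[
\tilde\partial_x^k G^c_\gamma(x) - \tilde\partial_x^k G^{\mathrm{heat}}_\gamma(x) = -c_{2m}^{-1}\frac{\beta}{\gamma}\Big(\nu^-(\gamma)^k e^{\nu^-(\gamma)x} - (-\nu_0\gamma)^k e^{-\nu_0\gamma x}\Big),
\]
and I would split the bracket as $\nu^-(\gamma)^k\big(e^{\nu^-(\gamma)x} - e^{-\nu_0\gamma x}\big) + \big(\nu^-(\gamma)^k - (-\nu_0\gamma)^k\big)e^{-\nu_0\gamma x}$. The structural inputs are the expansion $\nu^-(\gamma) = -\nu_0\gamma + \mathrm{O}(\gamma^2)$ from \eqref{e: spatial evals}, which gives $|\nu^-(\gamma)| \le C|\gamma|$, $|\nu^-(\gamma) + \nu_0\gamma| \le C|\gamma|^2$, and (factoring a difference of $k$-th powers) $|\nu^-(\gamma)^k - (-\nu_0\gamma)^k| \le C|\gamma|^{k+1}$; together with the sign facts $\Re\nu^-(\gamma) < 0$ (since $M(\gamma)$ is hyperbolic when $\gamma^2$ lies to the right of $\Sigma^+_{\eta_*}$) and $\Re\gamma > 0$ (the branch cut), so that $|e^{\nu^-(\gamma)x}|, |e^{-\nu_0\gamma x}| \le 1$ for $x \ge 0$. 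The non-oscillatory summand is then immediately $\mathrm{O}(|\gamma|^{k+1})$, hence after dividing by $|\gamma|$ it is $\mathrm{O}(|\gamma|^k) \le C|\gamma|\langle x\rangle$ for $k \ge 1$ (for $k = 0$ this summand is absent).

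For the oscillatory summand $\nu^-(\gamma)^k\big(e^{\nu^-(\gamma)x} - e^{-\nu_0\gamma x}\big)$ I would distinguish two regimes. If $|\gamma|^2 x \le 1$, I write $e^{\nu^-(\gamma)x} - e^{-\nu_0\gamma x} = (\nu^-(\gamma)+\nu_0\gamma)x\int_0^1 e^{-\nu_0\gamma x + t(\nu^-(\gamma)+\nu_0\gamma)x}\,dt$; the integrand has modulus at most $e^{-\nu_0(\Re\gamma)x + C|\gamma|^2 x} \le e^{C}$, so the difference is $\mathrm{O}(|\gamma|^2 x)$, and multiplying by $|\nu^-(\gamma)^k| \le C|\gamma|^k$ and dividing by $|\gamma|$ gives a bound $C|\gamma|^{k+1}x \le C|\gamma| x \le C|\gamma|\langle x\rangle$. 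If $|\gamma|^2 x > 1$, I would use only the coarse bound $|e^{\nu^-(\gamma)x} - e^{-\nu_0\gamma x}| \le 2$ together with $|\nu^-(\gamma)^k| \le C|\gamma|^k$, obtaining a bound $C|\gamma|^{k-1} \le C$; but in this regime $|\gamma|\langle x\rangle \ge |\gamma| x > |\gamma|^{-1} \ge \delta^{-1}$, so $C \le C\delta\,|\gamma|\langle x\rangle$, which is absorbed after enlarging the constant. The $x < 0$ branch is the $x \ge 0$ computation with $x$ replaced by $|x|$ and $\nu^-(\gamma)$ replaced by $-\nu^+(\gamma)$, which by \eqref{e: spatial evals} again equals $-\nu_0\gamma + \mathrm{O}(\gamma^2)$ and has negative real part, so the same estimates apply verbatim.

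The one point I would be careful about is the uniformity in $x$: since the essential spectrum is tangent to the imaginary axis at the origin, no lower bound of the form $\Re\gamma \gtrsim |\gamma|^2$ is available near the imaginary axis, so no single exponential estimate works for all $x \ge 0$. The case split sidesteps this — in the far regime $|\gamma|^2 x \gtrsim 1$ the crude bound $|e^{\nu^-(\gamma)x}| + |e^{-\nu_0\gamma x}| \le 2$ is enough precisely because the target $C|\gamma|\langle x\rangle$ is automatically bounded below there. Everything else is bookkeeping: the extra derivatives only produce the prefactors $\nu^-(\gamma)^k$ and $(-\nu_0\gamma)^k$, which are $\mathrm{O}(|\gamma|^k)$ with $k \ge 1$ and hence help rather than hurt.
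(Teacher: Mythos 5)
Your argument is correct and follows essentially the same route as the paper's proof: write out the explicit exponential formulas, split into the regimes $|\gamma|^2|x|$ small and large, use $\Re \nu^\pm(\gamma)\lessgtr 0$ and $\Re\gamma\ge 0$ to bound the exponentials, and absorb the coarse far-regime bound using $|\gamma|\langle x\rangle \ge |\gamma|^{-1}$ there (you additionally carry out the derivative bookkeeping explicitly, which the paper dismisses as ``analogous''). One cosmetic slip: in the far regime the intermediate inequality $C|\gamma|^{k-1}\le C$ fails for $k=0$, but your own next clause $|\gamma|\langle x\rangle>|\gamma|^{-1}\ge|\gamma|^{k-1}$ already closes that case, so nothing is lost.
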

\begin{proof}
	Let $x \geq 0$, and first suppose $|\gamma^2 x| < 2$. We write
	\begin{align*}
	|G_\gamma^c(x) - G_\gamma^\mathrm{heat}(x)| = \left| \frac{c_{2m}^{-1} \beta}{\gamma} \right| | e^{\nu^- (\gamma) x} - e^{- \nu_0 \gamma x}| = \frac{C}{|\gamma|} |e^{-\nu_0 \gamma x}| | e^{ (\nu^-(\gamma) + \nu_0 \gamma) x} - 1|. 
	\end{align*}
	Since 
	\begin{align*}
	\nu^- (\gamma) = -\nu_0 \gamma + \mathrm{O}(\gamma^2)
	\end{align*}
	we know that 
	\begin{align*}
	|\nu^- (\gamma) x + \nu_0 \gamma x| \leq C |\gamma^2 x| \leq 2C
	\end{align*}
	for some constant $C > 0$. It follows from differentiability of the exponential function that 
	\begin{align*}
	|e^{ (\nu^-(\gamma) + \nu_0 \gamma) x} - 1| \leq C |(\nu^- (\gamma) + \nu_0 \gamma) x| \leq C |\gamma^2 x|. 
	\end{align*}
	Also,  $\Re \gamma \geq 0$ implies $e^{- \nu_0 \gamma x}$ is bounded. Hence we have 
	\begin{align*}
	|G^c_\gamma (x) - G^\mathrm{heat}_\gamma (x)| \leq C |\gamma| \langle x \rangle,
	\end{align*}
	for $x > 0$ and $|\gamma^2 x| < 2$. Next, we assume $|\gamma^2 x| \geq 2$. Then, since $|e^z| \leq 1$ for $\Re z \leq 0$, and $\Re \nu^- (\gamma) \leq 0$ for $\gamma^2$ to the right of the essential spectrum, we have 
	\begin{align*}
	|G^c_\gamma(x) - G^\mathrm{heat}_\gamma (x)| \leq \frac{C}{|\gamma|} |e^{\nu^-(\gamma) x} - e^{- \nu_0 \gamma x}| \leq 2 \frac{C}{|\gamma|} \leq \frac{C}{|\gamma|} |\gamma^2 x| \leq C |\gamma| \langle x \rangle. 
	\end{align*}
	Hence we have the desired estimate in all cases, for $x \geq 0$. The argument for $x < 0$ is completely analogous, as are the estimates on the regular parts of the derivatives. 
\end{proof}

To prove the second part of Proposition \ref{p: right resolvent estimate}, we will also need to control the difference between $G^c_\gamma - G^\mathrm{heat}_\gamma$ and the leading order term in $\gamma$ in this expression. Fixing $x$ and expanding formally, one finds 
\begin{align}
G^c_\gamma(x) - G^\mathrm{heat}_\gamma(x) = -c_{2m}^{-1} \beta \gamma h(x) + \mathrm{O}(\gamma^2),
\end{align}
where 
\begin{align*}
h(x) = \begin{cases}
\nu_2^- x, &x \geq 0, \\
\nu_2^+ x, &x < 0,
\end{cases}
\end{align*}
and where $\nu^\pm (\gamma) = \pm \frac{1}{\sqrt{\alpha}} \gamma + \nu_2^\pm \gamma^2 + \mathrm{O}(\gamma^3)$. We now show precisely that the $\mathrm{O}(\gamma^2)$ term in this expression is appropriately controlled in space, and so contributes to the error term in \eqref{e: right second order expansion}. 

\begin{lemma}\label{l: center to heat resolvent expansion}
	Let $\delta > 0$ be small. There exists a constant $C > 0$ such that if $\gamma^2$ is to the right of $\Sigma^+_{\eta_*}$ and $|\gamma| \leq \delta$, then for any integer $0 \leq k \leq 2m-1$,
	\begin{align}
	|\tilde{\partial}_x^k (G^c_\gamma (x) - G^\mathrm{heat}_\gamma (x) + c_{2m}^{-1} \beta \gamma h(x))| \leq C |\gamma|^2 \langle x \rangle^2. \label{e: gc minus gheat expansion}
	\end{align}
\end{lemma}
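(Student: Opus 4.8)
The plan is to follow the same case distinction as in the proof of Lemma \ref{l: center to heat resolvent}, but now tracking one more order in $\gamma$. Fix $x \geq 0$; the case $x < 0$ is symmetric. Write $a(\gamma) = \nu^-(\gamma) + \nu_0 \gamma = \nu_2^- \gamma^2 + \mathrm{O}(\gamma^3)$, so that $G^c_\gamma(x) - G^\mathrm{heat}_\gamma(x) = -c_{2m}^{-1}\beta \gamma^{-1} e^{-\nu_0 \gamma x}\left(e^{a(\gamma)x} - 1\right)$. The target quantity (before taking regular derivatives) is then
\begin{align*}
-c_{2m}^{-1}\beta\left[ \gamma^{-1} e^{-\nu_0\gamma x}\left(e^{a(\gamma)x}-1\right) - \gamma \nu_2^- x\right].
\end{align*}
In the regime $|\gamma^2 x| \leq 2$, I would Taylor-expand $e^{a(\gamma)x} - 1 = a(\gamma)x + \mathrm{O}((a(\gamma)x)^2)$ and $e^{-\nu_0\gamma x} = 1 + \mathrm{O}(\nu_0\gamma x)$, using that both exponents are bounded in this regime and that $\Re\gamma \geq 0$ keeps $e^{-\nu_0\gamma x}$ bounded. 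Multiplying out, $\gamma^{-1}e^{-\nu_0\gamma x}(e^{a(\gamma)x}-1) = \gamma^{-1}a(\gamma)x + \gamma^{-1}\cdot\mathrm{O}(|a(\gamma)x|\cdot|\gamma x| + |a(\gamma)x|^2)$. The leading term $\gamma^{-1}a(\gamma)x = \nu_2^-\gamma x + \mathrm{O}(\gamma^2 x)$ cancels the subtracted $\gamma\nu_2^- x$ up to $\mathrm{O}(\gamma^2 x) = \mathrm{O}(\gamma^2\langle x\rangle^2)$, and each remaining error term is bounded by $C|\gamma|^2\langle x\rangle^2$ after using $|a(\gamma)| \leq C|\gamma|^2$ and $|\gamma^2 x| \leq 2$ to absorb excess powers of $x$. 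This is the only genuinely computational step, and the only possible subtlety is bookkeeping: one must check that no term produces $\langle x\rangle^3$ or higher, which works out because the worst term, $\gamma^{-1}|a(\gamma)x|^2 \le C\gamma^{-1}\gamma^4 x^2 = C\gamma^3 x^2 \leq C\gamma^2\langle x\rangle^2$ using $|\gamma^2 x|\le 2$ once more, i.e. $\gamma|x| \le 2/|\gamma|$ is not what we want — rather we simply bound $|\gamma|^3 x^2 = |\gamma|^2 \cdot |\gamma| x \cdot |x| \le |\gamma|^2 \cdot (2/|\gamma|) \cdot |x|$, hmm; cleaner is $|\gamma|^3 x^2 \le |\gamma|^2 \langle x\rangle^2$ directly when $|\gamma|\le\delta\le 1$. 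So the bound holds.

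In the complementary regime $|\gamma^2 x| \geq 2$, I would not expand but instead bound each piece separately: $|\gamma^{-1}e^{-\nu_0\gamma x}(e^{a(\gamma)x}-1)| \leq 2|\gamma|^{-1}$ since $\Re(-\nu_0\gamma x) \le 0$ and $\Re(\nu^-(\gamma)x) \le 0$ for $\gamma^2$ to the right of $\Sigma^+_{\eta_*}$, and then $|\gamma|^{-1} \leq |\gamma|\cdot|\gamma^{-2}| \leq |\gamma|\cdot|x|/2 \le C|\gamma|^2\langle x\rangle^2$ using $|\gamma^2 x|\ge 2 \Rightarrow |\gamma|^{-2}\le |x|/2$ and then one more factor $|\gamma|\le\delta$ — more carefully, $|\gamma|^{-1} = |\gamma|\cdot|\gamma|^{-2} \le |\gamma|\cdot(|x|/2)$, and since also $|\gamma|\le\delta$ we get $|\gamma|^{-1}\le \frac{\delta}{2}|\gamma|\,|x| \le C|\gamma|^2\langle x\rangle^2$? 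No — that last step needs $\le C|\gamma|^2$, which fails. The right move: $|\gamma|^{-1} = |\gamma| \cdot |\gamma|^{-2} \le |\gamma| \cdot \tfrac{|x|}{2}$, and since $|\gamma| \le \delta$ we may further write $|\gamma|\,|x| \le \delta^{-1}|\gamma|^2 |x| \le C|\gamma|^2 \langle x\rangle^2$; combining gives $|\gamma|^{-1} \le C|\gamma|^2\langle x\rangle^2$ with $C$ depending on $\delta$. Similarly $|\gamma\nu_2^- x| = C|\gamma|\,|x| \le C|\gamma|^2\langle x\rangle^2$ by the same estimate. Adding the two contributions gives \eqref{e: gc minus gheat expansion} for $k = 0$.

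Finally, for the regular derivatives $\tilde\partial_x^k$ with $1 \leq k \leq 2m-1$: on each half-line $G^c_\gamma$ and $G^\mathrm{heat}_\gamma$ are pure exponentials and $h$ is linear, so $\tilde\partial_x^k$ acts by bringing down factors of $\nu^-(\gamma) = \mathrm{O}(\gamma)$, $-\nu_0\gamma = \mathrm{O}(\gamma)$, or annihilating $h$ for $k\geq 2$ (and for $k=1$ replacing $\nu_2^- x$ by the constant $\nu_2^-$). One then repeats the two-regime argument verbatim: the extra factors of $\gamma$ only help, and the structure of the cancellation between $\tilde\partial_x^k(G^c_\gamma - G^\mathrm{heat}_\gamma)$ and $c_{2m}^{-1}\beta\gamma\,\tilde\partial_x^k h$ is identical to the $k=0$ case after matching the lowest-order-in-$\gamma$ terms. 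The main obstacle is purely the careful bookkeeping of which powers of $\langle x\rangle$ appear and ensuring they never exceed $2$; once the identity $|\gamma^2 x|\le 2$ (resp.\ $\ge 2$) is used to trade powers of $|x|$ for powers of $|\gamma|^{-1}$ in the correct direction, the estimate follows as in Lemma \ref{l: center to heat resolvent}.
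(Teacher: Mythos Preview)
Your overall strategy---push the argument of Lemma~\ref{l: center to heat resolvent} to one further order in $\gamma$, splitting into the regimes $|\gamma^2 x|\le 2$ and $|\gamma^2 x|\ge 2$---is essentially what the paper does. The paper organizes the computation slightly differently, substituting $z=\gamma x$ and $w=(\nu^-(\gamma)+\nu_0\gamma)x$ and bounding the expression divided by $|\gamma|^2\langle x\rangle^2$ uniformly in $(z,w)$, but the content is the same.

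There is, however, one genuine error and one imprecision worth fixing.

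\textbf{The error (regime $|\gamma^2 x|\ge 2$).} Your step ``since $|\gamma|\le\delta$ we may further write $|\gamma|\,|x|\le\delta^{-1}|\gamma|^2|x|$'' is backwards: $|\gamma|\le\delta$ gives $|\gamma|^2\le\delta|\gamma|$, i.e.\ $\delta^{-1}|\gamma|^2|x|\le|\gamma||x|$, the reverse of what you wrote. The bound you want is nevertheless true, but you must use the regime hypothesis a second time rather than $|\gamma|\le\delta$. From $|\gamma^2 x|\ge 2$ one has $|x|\ge 2|\gamma|^{-2}$, hence (for $\delta$ small, so $|x|\ge 1$) $\langle x\rangle^2\ge |x|^2\ge 4|\gamma|^{-4}$, which gives
\[
|\gamma|^2\langle x\rangle^2 \;\ge\; 4|\gamma|^{-2} \;\ge\; 4|\gamma|^{-1},
\]
the last step using $|\gamma|\le 1$. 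This handles the $|\gamma|^{-1}$ term. For the $|\gamma\nu_2^- x|$ term, the same regime condition gives $|\gamma|\langle x\rangle\ge |\gamma||x|\ge 2|\gamma|^{-1}\ge 2\delta^{-1}$, so $1\le(\delta/2)|\gamma|\langle x\rangle$ and therefore $|\gamma||x|\le|\gamma|\langle x\rangle\le(\delta/2)|\gamma|^2\langle x\rangle^2$.

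\textbf{The imprecision (regime $|\gamma^2 x|\le 2$).} You assert that ``both exponents are bounded in this regime.'' This is true for $a(\gamma)x$ (since $|a(\gamma)x|\le C|\gamma^2 x|\le 2C$), but \emph{not} for $-\nu_0\gamma x$: indeed $|\gamma x|=|\gamma^2 x|/|\gamma|\le 2/|\gamma|$, which is unbounded as $\gamma\to 0$. Fortunately your computation does not actually need boundedness of that exponent: what you use is $|e^{-\nu_0\gamma x}-1|\le|\nu_0\gamma x|$, which holds for all $z=-\nu_0\gamma x$ with $\Re z\le 0$ via $e^z-1=\int_0^1 z e^{tz}\,dt$. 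Once you replace the incorrect justification by this observation, your error decomposition $e^{-\nu_0\gamma x}(e^{a(\gamma)x}-1)-a(\gamma)x = (e^{-\nu_0\gamma x}-1)(e^{a(\gamma)x}-1)+(e^{a(\gamma)x}-1-a(\gamma)x)$ goes through as written.

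With these two corrections the proof is complete and matches the paper's.
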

\begin{proof}
	We focus on proving \eqref{e: gc minus gheat expansion} for $k = 0$, since the estimates on the regular parts of higher derivatives are similar. We only show the case where $x > 0$, since $x < 0$ is similar. For $x > 0$, we have 
	\begin{align*}
	|G^c_\gamma (x) - G^\mathrm{heat}_\gamma (x) + c_{2m}^{-1} \beta \gamma h(x)| = C \left| \frac{1}{\gamma} e^{\nu^-(\gamma) x} - \frac{1}{\gamma} e^{-\nu_0 \gamma x} - \nu_2^- \gamma x \right|. 
	\end{align*}
	Since 
	\begin{align*}
	\left| \nu^-_2 \gamma x - \frac{(\nu^-(\gamma) +\nu_0 \gamma)}{\gamma} x \right| \leq C |\gamma|^2 |x|, 
	\end{align*}
	we may replace $\nu_2^- \gamma$ in this expression with $(\nu^-(\gamma)+\nu_0 \gamma)/\gamma$ and absorb the difference into the error term. We let $z = \gamma x$, and $w = (\nu^-(\gamma) + \nu_0 \gamma) x$. Note that for $\gamma$ small, $|w| \leq C |\gamma| |z| \leq C |z|$. Hence 
	\begin{align*}
	\frac{1}{|\gamma|^2 \langle x \rangle^2} \left| \frac{1}{\gamma} e^{\nu^-(\gamma) x} - \frac{1}{\gamma} e^{-\nu_0 \gamma x} - \frac{(\nu^-(\gamma) +\nu_0 \gamma)}{\gamma} x \right| &\leq \frac{|w|}{|\gamma|} \frac{1}{|z|^2} \left| e^{-\nu_0 z} \frac{(e^w-1)}{w} - 1 \right| \\
	&\leq \frac{C}{|z|} \left| e^{-\nu_0 z} (1+ \mathrm{O}(w)) - 1 \right| \\
	&\leq \frac{C}{|z|} \left( |e^{-\nu_0 z} - 1| + C |w| |e^{- \nu_0 z}| \right) \\
	&\leq C 
	\end{align*}
	for $z, w$ small. The expression is also bounded for $z, w$ large: the only term which appears potentially problematic is $|e^{-\nu_0 z} e^w| = |e^{\nu^-(\gamma) x}|$, which is bounded since $\gamma^2$ is to the right of the essential spectrum, so $\Re \nu^- (\gamma) \leq 0$. Hence we obtain \eqref{e: gc minus gheat expansion}. 
\end{proof}

We now estimate the remaining error terms in the decomposition of the Green's function. 
\begin{lemma}\label{l: greens fcn remainder estimates}
	Let $r > 3/2$. There is a constant $C > 0$ such that the remainder terms in the Green's function satisfy the estimate
	\begin{align}
	\|[\tilde{\partial}_x^k (\tilde{G}^c_\gamma + G^h_\gamma - \tilde{G}^c_0 - G^h_0)] \ast g\|_{L^2_{-r, -r}} \leq C |\gamma| \|g\|_{L^2_{r,r}}  \label{e: right greens fcn error terms}
	\end{align}
	for any integer $1 \leq k \leq 2m-1$, any $g \in L^2_r (\R)$, and any $\gamma$ sufficiently small with $\gamma^2$ to the right of $\Sigma^+_{\eta_*}$. 

	Furthermore, if $r > 5/2$, then we can expand to second order in the sense that there is a function $\tilde{G}^1$ such that 
	\begin{align}
	\|[\tilde{\partial}_x^k (\tilde{G}^c_\gamma + G^h_\gamma - \tilde{G}^c_0 - G^h_0 - \gamma \tilde{G}^1)] \ast g\|_{L^2_{-r, -r}} \leq C |\gamma|^2 \|g\|_{L^2_{r, r}} \label{e: right greens fcn error terms 2nd order}
	\end{align}
	for any integer $1 \leq k \leq 2m-1$, any $g \in L^2_{r,r} (\R)$, and any $\gamma$ sufficiently small with $\gamma^2$ to the right of $\Sigma^+_{\eta_*}$. 
\end{lemma}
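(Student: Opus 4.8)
The plan is to estimate the center‑eigenvalue remainder $\tilde G^c_\gamma$ and the hyperbolic piece $G^h_\gamma$ separately, exploiting a different mechanism of smallness in each. For $\tilde G^c_\gamma$ the gain comes from the central spatial eigenvalues: by \eqref{e: spatial evals}, $\nu^\pm(\gamma)=\pm\tfrac1{\sqrt\alpha}\gamma+\mathrm O(\gamma^2)$, so every regular derivative of order $k\ge 1$ of $e^{\nu^\pm(\gamma)x}$ pulls down a factor $(\nu^\pm(\gamma))^k=\mathrm O(|\gamma|)$, while $e^{\nu^\pm(\gamma)x}$ is bounded by $1$ on the relevant half‑line (since for $\gamma^2$ to the right of $\Sigma^+_{\eta_*}$ we have $\Re\nu^-(\gamma)\le 0$ and $\Re\nu^+(\gamma)\ge 0$), and $\tilde P^{\mathrm{cs/cu}}(\gamma)$ is bounded — indeed analytic — near $\gamma=0$ by Lemma \ref{l: projection pole}, its remainder being a meromorphic function with its principal part removed. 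For $G^h_\gamma$ the gain comes from analyticity: by \eqref{e: matrix formula}, $M(\gamma)=M(0)+\gamma^2 E$ is affine in $\gamma^2$, the projections $P^{\mathrm{ss/uu}}(\gamma)$ are analytic in $\gamma^2$, and $e^{M(\gamma)x}P^{\mathrm{ss}}(\gamma)$ decays like $e^{-\delta|x|}$ on $x\ge 0$ uniformly in $\gamma$ small, the strong stable eigenvalues staying bounded away from the imaginary axis. The pointwise kernel bounds obtained this way are then fed into the elementary weighted convolution estimate: if $|K(x)|\le M\langle x\rangle^N$ then, using $\langle x-y\rangle^N\le\langle x\rangle^N\langle y\rangle^N$ and Cauchy--Schwarz with $\langle\cdot\rangle^{N-r}\in L^2(\R)$ (valid for $r>N+\tfrac12$), convolution with $K$ is bounded $L^2_{r,r}(\R)\to L^2_{-r,-r}(\R)$ with norm $\le C_{N,r}M$; if moreover $|K(x)|\le Me^{-a|x|}$ with $a>0$, the same holds for every $r\ge 0$, directly from $\|K\ast g\|_{L^2}\le\|K\|_{L^1}\|g\|_{L^2}$.

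For \eqref{e: right greens fcn error terms}: since $\nu^\pm(0)=0$, $\tilde G^c_0$ is piecewise constant in $x$, so $\tilde\partial_x^k\tilde G^c_0\equiv 0$ for $k\ge 1$ and the center contribution is just $\tilde\partial_x^k\tilde G^c_\gamma$. For $x\ge 0$ this equals $-c_{2m}^{-1}(\nu^-(\gamma))^k e^{\nu^-(\gamma)x}P_1\tilde P^{\mathrm{cs}}(\gamma)Q_1$, which by the observations above is bounded by $C|\gamma|$ uniformly in $x\ge 0$; the case $x<0$ is identical with $\nu^+$ and $\tilde P^{\mathrm{cu}}$. For the hyperbolic contribution, writing out $\tilde\partial_x^k(G^h_\gamma-G^h_0)$ and using $M(\gamma)-M(0)=\gamma^2 E$, analyticity in $\gamma^2$ of $P^{\mathrm{ss/uu}}(\gamma)$, and the uniform exponential decay of the matrix exponentials on the strong stable/unstable subspaces, one obtains $|\tilde\partial_x^k(G^h_\gamma-G^h_0)(x)|\le C|\gamma|e^{-\delta|x|}$. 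Applying the weighted convolution estimate ($N=0$ for the center piece, exponential localization for the hyperbolic piece, both admissible since $r>\tfrac32$) gives \eqref{e: right greens fcn error terms}.

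For \eqref{e: right greens fcn error terms 2nd order} I would peel off the $\gamma$‑linear part. Only the center piece at $k=1$ contributes a genuine $\gamma^1$ term: for $k\ge 2$ one has $|\tilde\partial_x^k\tilde G^c_\gamma(x)|\le C|\gamma|^k\le C\gamma^2$ uniformly in $x$, and $\tilde\partial_x^k(G^h_\gamma-G^h_0)=\mathrm O(\gamma^2)$ already, since $M$ and $P^{\mathrm{ss/uu}}$ are functions of $\gamma^2$ so their $\gamma$‑derivatives vanish at $\gamma=0$ (the diagonalization argument below upgrades this to $|\tilde\partial_x^k(G^h_\gamma-G^h_0)(x)|\le C\gamma^2\langle x\rangle e^{-\delta|x|}$). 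For $k=1$ and $x\ge 0$, $\tilde\partial_x\tilde G^c_\gamma(x)=-c_{2m}^{-1}\nu^-(\gamma)e^{\nu^-(\gamma)x}P_1\tilde P^{\mathrm{cs}}(\gamma)Q_1$ vanishes at $\gamma=0$ and has $\gamma$‑derivative at $\gamma=0$ equal to $c_{2m}^{-1}\tfrac1{\sqrt\alpha}P_1\tilde P^{\mathrm{cs}}(0)Q_1$, a constant in $x$, because all terms differentiating $e^{\nu^-(\gamma)x}$ or $\tilde P^{\mathrm{cs}}(\gamma)$ are killed by the prefactor $\nu^-(0)=0$. I would therefore take $\tilde G^1$ to be any kernel whose regular derivative $\tilde\partial_x\tilde G^1$ equals this constant for $x>0$ and the analogous constant $c_{2m}^{-1}\tfrac1{\sqrt\alpha}P_1\tilde P^{\mathrm{cu}}(0)Q_1$ for $x<0$ — for instance the corresponding piecewise‑linear function, the integration constant being irrelevant since only $k\ge 1$ enters, so that also $\tilde\partial_x^k\tilde G^1\equiv 0$ for $k\ge 2$, matching the above. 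Using $\nu^-(\gamma)+\tfrac1{\sqrt\alpha}\gamma=\mathrm O(\gamma^2)$, $\|\tilde P^{\mathrm{cs}}(\gamma)-\tilde P^{\mathrm{cs}}(0)\|\le C|\gamma|$, and $|e^{\nu^-(\gamma)x}-1|\le|\nu^-(\gamma)|\,x\le C|\gamma|\langle x\rangle$ for $x\ge 0$, a routine splitting gives $|\tilde\partial_x\tilde G^c_\gamma(x)-\gamma\tilde\partial_x\tilde G^1(x)|\le C\gamma^2\langle x\rangle$; the $k\ge 2$ center terms and the hyperbolic difference obey the same (or better) bound. The weighted convolution estimate with $N=1$, valid since $r>\tfrac32$ and in particular for $r>\tfrac52$, then gives \eqref{e: right greens fcn error terms 2nd order}.

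The one point needing real care is the exponentially localized $\mathrm O(\gamma^2)$ bound on $\tilde\partial_x^k(G^h_\gamma-G^h_0)$: one must upgrade ``$M$ and $P^{\mathrm{ss/uu}}$ analytic in $\gamma^2$'' to ``$x\mapsto\tilde\partial_x^k\big[e^{M(\gamma)x}P^{\mathrm{ss}}(\gamma)\big]$ is analytic in $\gamma^2$ with values in an exponentially weighted space'', which is standard but not entirely automatic. Concretely, one passes to a frame adapted to the strong stable/unstable splitting — analytic in $\gamma^2$ since the projections are — which block‑diagonalizes $M(\gamma)$, and one is reduced to the elementary estimate $\|\partial_\mu^j e^{A(\mu)x}\|\le C\langle x\rangle^j e^{-\delta' x}$ for $A(\mu)$ analytic in $\mu$ with spectrum strictly in the open left half plane, which follows from the Duhamel formula; Taylor expansion in $\mu=\gamma^2$ then yields the claim. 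Everything else is bookkeeping; in particular the Dirac‑mass terms produced by differentiating the kernels across $x=0$ may be discarded throughout, as noted before the lemma, since they cancel in the sum reconstituting $G^+_\gamma$.
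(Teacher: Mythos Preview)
Your proof is correct and follows the same overall architecture as the paper's: separate the center remainder $\tilde G^c_\gamma$ from the hyperbolic piece $G^h_\gamma$, obtain pointwise kernel bounds, and feed them into a weighted convolution estimate via Cauchy--Schwarz. The treatment of $G^h_\gamma$ is identical in spirit (analyticity in $\gamma^2$ plus uniform exponential localization, hence $L^1$-analyticity and Young's inequality); your diagonalization/Duhamel remark makes explicit what the paper leaves implicit.

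The one genuine tactical difference is in how you extract the factor of $|\gamma|$ from the center piece. You exploit that the lemma is stated for $k\ge 1$: since $\tilde G^c_0$ is piecewise constant, $\tilde\partial_x^k\tilde G^c_0\equiv 0$, and each regular derivative of $e^{\nu^\pm(\gamma)x}$ pulls down a factor $\nu^\pm(\gamma)=\mathrm O(|\gamma|)$, yielding a bound $C|\gamma|$ \emph{uniform} in $x$ (so $N=0$ in the convolution estimate). The paper instead works out $k=0$ explicitly, using $|e^{\nu^\pm(\gamma)x}-1|\le C|\gamma||x|$ together with analyticity of $\tilde P^{\mathrm{cs/cu}}$, which gives the weaker pointwise bound $C|\gamma|\langle x\rangle$ (so $N=1$, requiring the full $r>3/2$), and then declares higher $k$ ``analogous.'' Your route is cleaner for the range $k\ge 1$ that the lemma actually states; note however that the paper's proof of Proposition~\ref{p: right resolvent estimate} tacitly also needs the $k=0$ case, where your differentiation trick is unavailable and one must fall back on the $|e^z-1|$ estimate the paper uses. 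Your second-order argument is likewise correct and slightly sharper than what the paper's ``Taylor expand to higher order'' would give (you land on $\langle x\rangle$ growth rather than $\langle x\rangle^2$), at the cost of a more explicit identification of $\tilde G^1$.
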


\begin{proof}
	We focus on the estimate \eqref{e: right greens fcn error terms} for $k = 0$, since the estimates on the regular parts of the derivatives are analogous. Note that for $\gamma$ small, $G_\gamma^h$ is analytic in $\gamma$ and is exponentially localized in space, with decay rate independent of $\gamma$. It follows that $\gamma \mapsto G_\gamma^h$ is analytic from a neighborhood of the origin into $L^1 (\R)$. Young's convolution inequality then implies that convolution with $G^h_\gamma$ is analytic in $\gamma$ as a family of bounded operators on $L^2(\R)$, and so in particular
	\begin{equation*}
	\|(G^h_\gamma - G^h_0) \ast g\|_{L^2_{-r, -r}} \leq\|(G^h_\gamma - G^h_0) \ast g\|_{L^2} \leq C |\gamma| \|g\|_{L^2} \leq C |\gamma| \|g\|_{L^2_{r, r}}
	\end{equation*}
	For the other term, we use the fact that for $\gamma$ small with $\gamma^2$ to the right of the essential spectrum, we have $\Re \nu^- (\gamma) \leq 0$, and so for $x > 0$
	\begin{align*}
	|e^{\nu^-(\gamma) x} - 1| \leq C |\nu^-(\gamma)| |x| \leq  C |\gamma| |x|, 
	\end{align*}
	and similarly for $x < 0$
	\begin{align*}
	|e^{\nu^+(\gamma) x} - 1| \leq C |\nu^+(\gamma)| |x| \leq C |\gamma| |x|, 
	\end{align*}
	using the estimate $|e^z - 1| \leq C |z|$ for $\Re z \leq 0$. This estimate together with the fact that the maps $\gamma \mapsto \tilde{P}^\mathrm{cs/cu}(\gamma)$ are analytic in $\gamma$ in a neighborhood of the origin imply that 
	\begin{align*}
	|\tilde{G}^c_\gamma (x) - \tilde{G}^c_0 (x)| \leq C |\gamma| |x|. 
	\end{align*}
	The function space estimate in \eqref{e: right greens fcn error terms} then follows from the Cauchy-Schwarz inequality --- see the proof of Proposition \ref{p: right resolvent estimate} below. The proof of \eqref{e: right greens fcn error terms 2nd order} is similar, simply requiring Taylor expanding the exponential to higher order. 
\end{proof}

The behavior of the heat resolvent improves when acting on odd functions $g$, compared to a generic function with the same localization. Restricting to odd functions in the resolvent equation $(\partial_{xx} - \gamma^2) u = g$ is equivalent to posing the problem on a half-line with a homogeneous Dirichlet boundary condition. The improved properties of the resolvent in this context have been  exploited in \cite{JensenHalfline} to establish expansions for resolvents of Schr\"odinger operators on the half-line. As in \cite{JensenHalfline}, we write for a sufficiently localized odd function $g$,
\begin{align*}
G^\mathrm{heat}_\gamma \ast g (x) = - c_{2m}^{-1} \beta \int_0^\infty G_\gamma^\mathrm{odd}(x,y) g(y) \, dy,
\end{align*}
where 
\begin{align}
G^\mathrm{odd}_\gamma (x,y) = \frac{1}{\gamma} \left( e^{- \nu_0 \gamma |x-y|} - e^{- \nu_0 \gamma |x+y|} \right). \label{e: G odd}
\end{align}
We collect the properties of $G^\mathrm{odd}_\gamma$ in the following lemma, whose proof follows from careful but elementary computation, similar to the proof of Lemma \ref{l: center to heat resolvent expansion}. 

\begin{lemma}\label{l: dirichlet resolvent}
	There exists a constant $C > 0$ such that for all $\gamma$ with $\Re \gamma \geq 0$, we have 
	\begin{align*}
	|G_\gamma^\mathrm{odd} (x,y) - 2 \nu_0 \min (x,y)| &\leq C |\gamma| \langle x \rangle \langle y \rangle, \\
	|\partial_x G_\gamma^\mathrm{odd} (x,y) - 2 \nu_0 \partial_x \min (x,y)| &\leq C |\gamma| \langle x \rangle \langle y \rangle,
	\end{align*}
	and 
	\begin{align*}
	|G^\mathrm{odd}_\gamma (x,y) - 2 \nu_0 \min (x,y) + 2 \gamma \nu_0^2 x y | &\leq C |\gamma|^2 \langle x \rangle^2 \langle y \rangle^2, \\
	|\partial_x (G^\mathrm{odd}_\gamma (x,y) - 2 \nu_0 \min (x,y) + 2 \gamma \nu_0^2 x y)| & \leq C |\gamma|^2 \langle x \rangle^2 \langle y \rangle^2. 
	\end{align*}
\end{lemma}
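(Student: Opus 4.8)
The plan is to reduce all four inequalities to a short list of elementary estimates for $e^{-z}$ with $\Re z\geq 0$, after a change of variables built around the identity $|x+y|-|x-y|=2\min(x,y)$ (recall $x,y>0$, so $|x+y|=x+y$). By the symmetry $G_\gamma^{\mathrm{odd}}(x,y)=G_\gamma^{\mathrm{odd}}(y,x)$ one may assume $x\leq y$ when estimating $G_\gamma^{\mathrm{odd}}$ itself; for the $\partial_x$ estimates the regions $x<y$ and $x>y$ are treated separately. I would set $u=\nu_0\gamma(y-x)$ and $v=2\nu_0\gamma\min(x,y)$, so that $\Re u,\Re v\geq 0$, $u+v=\nu_0\gamma(x+y)$, and, for $x\leq y$,
\[ \gamma\,G_\gamma^{\mathrm{odd}}(x,y)=e^{-u}(1-e^{-v}),\qquad 2\nu_0\gamma\min(x,y)=v,\qquad 2\nu_0^2\gamma^2 xy=uv+\tfrac12 v^2. \]
The workhorses are the Taylor remainder bounds $|e^{-z}-1|\leq|z|$, $|e^{-z}-1+z|\leq\tfrac12|z|^2$, and $|e^{-z}-1+z-\tfrac12 z^2|\leq\tfrac16|z|^3$, all valid for every $z$ with $\Re z\geq 0$ (write the remainder in integral form and use $|e^{-tz}|\leq 1$ for $t\in[0,1]$); in particular no smallness on $\gamma$ is needed anywhere.

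For the first stated inequality I would write $\gamma\bigl(G_\gamma^{\mathrm{odd}}(x,y)-2\nu_0\min(x,y)\bigr)=(e^{-u}-1)(1-e^{-v})-(e^{-v}-1+v)$ and bound this by $|u||v|+\tfrac12|v|^2=2\nu_0^2|\gamma|^2\bigl((y-x)x+x^2\bigr)=2\nu_0^2|\gamma|^2 xy$; dividing by $|\gamma|$ yields the estimate with the balanced weight $\langle x\rangle\langle y\rangle$. The algebraic cancellation $(y-x)x+x^2=xy$, together with the fact that $v$ carries the \emph{smaller} of $x,y$, is what forces the balanced weight. For the $\partial_x$-version I would start from $\partial_x G_\gamma^{\mathrm{odd}}(x,y)=\nu_0\bigl(e^{-\nu_0\gamma(x+y)}-\sign(x-y)\,e^{-\nu_0\gamma|x-y|}\bigr)$: for $x<y$, $2\nu_0\partial_x\min(x,y)=2\nu_0$ and $\partial_x G_\gamma^{\mathrm{odd}}-2\nu_0=\nu_0\bigl((e^{-(u+v)}-1)+(e^{-u}-1)\bigr)$, which is $\leq\nu_0(|u+v|+|u|)=2\nu_0^2|\gamma|y$; for $x>y$, $\partial_x\min\equiv 0$ and $\partial_x G_\gamma^{\mathrm{odd}}=\nu_0 e^{-\nu_0\gamma(x-y)}(e^{-2\nu_0\gamma y}-1)$, which is $\leq\nu_0|2\nu_0\gamma y|=2\nu_0^2|\gamma|y$.

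For the second-order statements I would expand one order further, keeping the same groupings. For $G_\gamma^{\mathrm{odd}}$ itself,
\[ \gamma\bigl(G_\gamma^{\mathrm{odd}}-2\nu_0\min\bigr)+2\nu_0^2\gamma^2 xy=e^{-u}(1-e^{-v})-v+uv+\tfrac12 v^2=u(e^{-v}-1+v)+(e^{-u}-1+u)(1-e^{-v})-(e^{-v}-1+v-\tfrac12 v^2), \]
which is bounded by $\tfrac12|u||v|^2+\tfrac12|u|^2|v|+\tfrac16|v|^3$; using $y-x\leq y$, $x\leq y$ and $x^2\leq\langle x\rangle^2$, each term is $\leq C|\gamma|^3\langle x\rangle^2\langle y\rangle^2$, so dividing by $|\gamma|$ gives the claim. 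For $\partial_x$, when $x<y$ one has $\partial_x(2\gamma\nu_0^2 xy)=2\nu_0^2\gamma y=2\nu_0 u+\nu_0 v$, and a short computation gives $\partial_x G_\gamma^{\mathrm{odd}}-2\nu_0+2\nu_0^2\gamma y=\nu_0\bigl[(e^{-(u+v)}-1+(u+v))+(e^{-u}-1+u)\bigr]$, bounded via the quadratic remainder by $\tfrac12\nu_0^3|\gamma|^2\bigl((x+y)^2+(y-x)^2\bigr)=\nu_0^3|\gamma|^2(x^2+y^2)$; when $x>y$, with $\tilde u=\nu_0\gamma(x-y)$ and $\tilde v=2\nu_0\gamma y$ one gets $\partial_x G_\gamma^{\mathrm{odd}}+2\nu_0^2\gamma y=\nu_0\bigl[e^{-\tilde u}(e^{-\tilde v}-1+\tilde v)+(1-e^{-\tilde u})\tilde v\bigr]$, bounded by $\nu_0\bigl(\tfrac12|\tilde v|^2+|\tilde u||\tilde v|\bigr)=2\nu_0^3|\gamma|^2 xy$, again using $y^2+(x-y)y=xy$. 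The one genuinely non-mechanical point is this bookkeeping: after invoking $x\leq y$ and $y-x\leq y$, each error term must land on a product controlled by $\langle x\rangle^2\langle y\rangle^2$ rather than an unbalanced power of $\langle y\rangle$, and the identities $2\nu_0^2\gamma^2 xy=uv+\tfrac12 v^2$, $(y-x)x+x^2=xy$, and $y^2+(x-y)y=xy$ are exactly what make this happen; everything else is routine Taylor estimation, parallel to the proof of Lemma~\ref{l: center to heat resolvent expansion}.
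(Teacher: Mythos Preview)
Your proof is correct and is precisely the kind of argument the paper has in mind: the paper does not actually prove this lemma, stating only that it ``follows from careful but elementary computation, similar to the proof of Lemma~\ref{l: center to heat resolvent expansion}.'' Your change of variables $u=\nu_0\gamma(y-x)$, $v=2\nu_0\gamma\min(x,y)$ together with the integral-form Taylor remainder bounds for $e^{-z}$ with $\Re z\geq 0$ carries out exactly this program, and your algebraic identities $(y-x)x+x^2=xy$ and $y^2+(x-y)y=xy$ are the right bookkeeping devices to land on the balanced weights $\langle x\rangle\langle y\rangle$ and $\langle x\rangle^2\langle y\rangle^2$.
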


\begin{proof}[Proof of Proposition 2.1]
	Since $G_\gamma^+ \in H^{2m-1}_\mathrm{loc} (\R)$ for $\gamma^2$ to the right of the essential spectrum, for any integer $1 \leq k \leq 2m-1$, we may write 
	\begin{align*}
	\partial_x^k \int_\R G_\gamma (x-y) g(y) \, dy = \int_\R \partial_x G_\gamma (x-y) g(y) \, dy = \int_\R \tilde{\partial}_x^k G_\gamma (x-y) g(y) \, dy.
	\end{align*}
	Now that we have used regularity of $G_\gamma^+$ to replace the derivatives with only the regularized parts, we split $G_\gamma^+$ into its components as in \eqref{e: right greens fcn decomposition}, 
	\begin{align*}
	\int_\R \tilde{\partial}_x^k G_\gamma (x-y) g(y) \, dy = [\tilde{\partial}_x^k( G^\mathrm{heat}_\gamma + G^c_\gamma - G^\mathrm{heat}_\gamma + \tilde{G}^c_\gamma + G^h_\gamma)] \ast g (x). 
	\end{align*}
	By Lemma \ref{l: center to heat resolvent} we have 
	\begin{align*}
	|[\tilde{\partial}_x^k (G^c_\gamma - G^\mathrm{heat}_\gamma)] \ast g (x)| \leq C |\gamma| \int_\R |x-y| |g(y)| \, dy 
	\leq C |\gamma| \int_\R \max(\langle x \rangle, \langle y \rangle) |g(y)| \, dy. 
	\end{align*}
	For $g \in L^2_r (\R)$, we use the Cauchy-Schwarz inequality to obtain 
	\begin{align*}
	\|[\tilde{\partial}_x^k (G^c_\gamma - G^\mathrm{heat}_\gamma)] \ast g\|_{L^2_{-r,-r}} \leq C |\gamma| \|g\|_{L^2_{r,r}} \left( \int_\R \max(\langle x \rangle, \langle y \rangle)^2 (\langle x \rangle \langle y \rangle)^{-2r} dx dy \right)^{1/2}. 
	\end{align*}
	Splitting this integral into integrals over regions $|y| \leq |x|$ and $|x| \leq |y|$, one finds that the integral is finite for $r > 3/2$, and one thereby obtains 
	\begin{align*}
	\|[\tilde{\partial}_x^k (G^c_\gamma - G^\mathrm{heat}_\gamma)] \ast g\|_{L^2_{-r, -r}} \leq C |\gamma| \|g\|_{L^2_{r,r}}. 
	\end{align*}
	Hence this term is $\mathrm{O}(\gamma)$, and can be absorbed into the error term. In proving \eqref{e: right second order expansion}, one instead uses the estimate in Lemma \ref{l: center to heat resolvent expansion}, which gives an expansion of this term to second order in $\gamma$. 
	
	Expansions for $\tilde{\partial}_x^k (\tilde{G}^c_\gamma + G^h_\gamma)$ are already given in Lemma \ref{l: greens fcn remainder estimates}, so it only remains to obtain expansions for $\tilde{\partial}_x^k G^\mathrm{heat}_\gamma$ acting on odd functions $g$. For $k = 0$ or 1 these expansions follows immediately from the estimates in Lemma \ref{l: dirichlet resolvent}. For $k \geq 2$, the estimates are actually simpler, and can be seen directly from $G^\mathrm{heat}_\gamma$ rather than using the odd extension, since taking derivatives in $x$ introduces extra factors of $\gamma$. This completes the proof of Proposition \ref{p: right resolvent estimate}. 
\end{proof}

We conclude this section by observing that our spectral assumptions imply that $(\mcl_- - \gamma^2)^{-1}$ is analytic in $\gamma^2$. 

\begin{lemma}\label{l: left resolvent estimates}
	For $\eta \geq 0$ sufficiently small, the operator $(\mcl_- - \gamma^2)^{-1}: L^2_{\exp, \eta} (\R) \to H^{2m-1}_{\mathrm{exp}, \eta} (\R)$ is analytic in $\gamma^2$ in a neighborhood of the origin. 
\end{lemma}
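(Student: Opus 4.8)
The plan is to exploit Hypothesis \ref{hyp: left}, which says $\Re(\Sigma^-) < 0$, together with the fact that the essential spectrum $\Sigma^-$ moves only continuously — and by a controlled amount — under conjugation by the mild exponential weight $\omega_\eta$. First I would recall that multiplication by $\omega_\eta$ is an isomorphism $L^2_{\exp,\eta}(\R) \to L^2(\R)$ and $H^{2m-1}_{\exp,\eta}(\R) \to H^{2m-1}(\R)$, so that analyticity of $(\mcl_- - \gamma^2)^{-1}$ as a family of operators between the weighted spaces is equivalent to analyticity of $\omega_\eta (\mcl_- - \gamma^2)^{-1} \omega_\eta^{-1} = (\omega_\eta \mcl_- \omega_\eta^{-1} - \gamma^2)^{-1}$ between the unweighted spaces. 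Since $\mcl_-$ is the constant-coefficient operator $\mathcal{P}(\partial_x) + c_* \partial_x + f'(1)$, conjugation is just the substitution $\partial_x \mapsto \partial_x - \eta$ in the symbol, so the conjugated operator is again constant-coefficient with $L^2$-spectrum given by the shifted dispersion curve $\Sigma^-_\eta = \{\lambda : d^-(\lambda, ik - \eta) = 0 \text{ for some } k \in \R\}$.

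The key step is then to show that for $\eta \geq 0$ sufficiently small, $\Sigma^-_\eta$ still lies strictly in the open left half-plane, and in particular stays uniformly bounded away from a fixed neighborhood of the origin. This follows because $\Sigma^-_0 = \Sigma^-$ satisfies $\Re(\Sigma^-) < 0$ by Hypothesis \ref{hyp: left}, and because the map $(\eta, k) \mapsto \{\lambda : d^-(\lambda, ik-\eta)=0\}$ depends continuously on $\eta$ uniformly on compact $k$-sets, while for $|k|$ large the real parts of the roots tend to $-\infty$ (controlled by the leading term $(-1)^m p_{2m}(ik-\eta)^{2m}$ with $(-1)^m p_{2m} < 0$), uniformly for $\eta$ in a bounded set. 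Hence there is $\delta > 0$ and $\eta_0 > 0$ such that $B(0,\delta) \cap \Sigma^-_\eta = \emptyset$ for all $0 \leq \eta \leq \eta_0$. Choosing $\gamma$ in a neighborhood of the origin small enough that $\gamma^2 \in B(0,\delta)$, the resolvent $(\omega_\eta \mcl_- \omega_\eta^{-1} - \gamma^2)^{-1}$ exists as a bounded operator $L^2 \to H^{2m-1}$ (elliptic regularity gives the gain of $2m-1$ derivatives, with $2m$ available but $2m-1$ sufficing for our needs), with norm uniformly bounded for such $\gamma$.

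Finally, analyticity in $\gamma^2$ follows from the standard resolvent identity / Neumann series argument: fixing a base point $\gamma_0^2 \in B(0,\delta)$, one writes $(\omega_\eta \mcl_- \omega_\eta^{-1} - \gamma^2)^{-1} = (\omega_\eta \mcl_- \omega_\eta^{-1} - \gamma_0^2)^{-1}\big(I - (\gamma^2 - \gamma_0^2)(\omega_\eta \mcl_- \omega_\eta^{-1} - \gamma_0^2)^{-1}\big)^{-1}$, and expands the second factor as a convergent Neumann series in powers of $(\gamma^2 - \gamma_0^2)$ for $\gamma^2$ close to $\gamma_0^2$; this exhibits the resolvent as a locally convergent power series in $\gamma^2$, hence analytic. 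The main obstacle — really the only non-bookkeeping point — is verifying that the shifted essential spectrum $\Sigma^-_\eta$ stays away from the origin uniformly in small $\eta$; this is where Hypothesis \ref{hyp: left} and the ellipticity condition $(-1)^m p_{2m} < 0$ are used, and it is the precise sense in which the left rest state being "stable in a strong sense" prevents it from interfering with the branch-point analysis on the right.
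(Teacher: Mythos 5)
Your proposal is correct and follows essentially the same route as the paper: the paper's proof is a one-line observation that $0$ lies in the resolvent set of $\mcl_-$ by Hypothesis \ref{hyp: left} together with continuity of the Fredholm borders in $\eta$, after which analyticity in $\gamma^2$ is standard spectral theory. You have simply written out the details (conjugation by $\omega_\eta$, the large-$|k|$ control via $(-1)^m p_{2m}<0$, and the Neumann series) that the paper leaves implicit.
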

\begin{proof}
	By standard spectral theory, this amounts to saying that $0$ is in the resolvent set of the operator $\mathcal{L}_-$, which follows directly from Hypothesis \ref{hyp: left}, and the fact that the Fredholm borders in the exponentially weighted space depend continuously on the parameter $\eta$. 
\end{proof}

\section{Full resolvent estimates}\label{s: full resolvent}

\subsection{Far-field/core decomposition and leading order estimates} \label{ss: farfield core leading order}
We now extend the resolvent estimates of Proposition \ref{p: right resolvent estimate} to the full resolvent operator $(\mcl - \gamma^2)^{-1}$, in the following sense. Note that we only require additional algebraic localization on the right. 

\begin{prop}\label{p: full resolvent order 1}
	Let $r > 3/2$. There are constants $C > 0$ and $\delta > 0$ such that for any $g \in L^2_r (\R)$, the solution to $(\mcl - \gamma^2) u = g$ satisfies 
	\begin{align}
	\|u(\gamma) - u(0)\|_{H^{2m-1}_{-r}} \leq C |\gamma| \|g\|_{L^2_r}
	\end{align}
	for all $\gamma \in B(0,\delta)$ with $\gamma^2$ to the right of $\Sigma^+_{\eta_*}$. 
\end{prop}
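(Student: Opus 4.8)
The plan is to prove Proposition~\ref{p: full resolvent order 1} by a far-field/core decomposition, following \cite{PoganScheel}, which reduces the full variable-coefficient problem to the two asymptotic constant-coefficient problems already analyzed (Proposition~\ref{p: right resolvent estimate} on the right, Lemma~\ref{l: left resolvent estimates} on the left) plus a finite-dimensional gluing problem that is regular in $\gamma^2$. Concretely, write the prospective solution as $u = \chi_- w_- + \chi_+ w_+ + v$, where $\chi_\pm$ are smooth cutoffs supported near $\pm\infty$, $w_\pm$ are to be solved using the asymptotic resolvents $(\mcl_\pm - \gamma^2)^{-1}$ applied to (localized pieces of) $g$, and $v$ is a ``core'' correction living in an exponentially weighted space $H^{2m}_{\mathrm{exp},\eta}(\R)$ with $\eta$ small. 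Substituting into $(\mcl - \gamma^2)u = g$ produces, on each overlap region where $\chi_\pm$ transitions, commutator terms $[\mathcal{P}(\partial_x), \chi_\pm]w_\pm$ which are compactly supported and hence exponentially localized; these get absorbed into the core equation. The resulting system for $(w_-, w_+, v)$, or rather for the gluing parameters, is Fredholm of index zero, and Hypothesis~\ref{hyp: resonances} (no resonance, no unstable point spectrum) guarantees invertibility at $\gamma = 0$, hence for $\gamma$ small.

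The key steps, in order, are as follows. First, set up the decomposition and isolate the gluing parameters: because $\mcl_+ u = 0$ solutions have the exponential expansion $u = \chi_+(x)(\mu_0 + \mu_1 x) + w$ described in the preliminaries, the right asymptotic piece $w_+$ carries a linearly-growing mode, and the finite-dimensional matching is over the span of $\{1, x\}$ at $+\infty$ plus the exponentially-decaying/growing modes of $\mcl_\pm$. Second, invoke Proposition~\ref{p: right resolvent estimate}: since we only need the odd-function improvement to get the $\mathrm{O}(|\gamma|)$ Lipschitz bound, one must be slightly careful that the piece of $g$ fed into $R^+(\gamma)$ can be taken odd, or else argue that the needed combination of the even/odd parts still produces the claimed bound --- in fact the far-field/core reduction naturally decouples into a part handled by the Dirichlet (odd) resolvent and a part where the singular $1/\gamma$ terms cancel against the core/gluing, so the full $u$ has no $1/\gamma$ singularity. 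Third, use Lemma~\ref{l: left resolvent estimates} to handle $w_-$, which is analytic in $\gamma^2$ and hence trivially Lipschitz. Fourth, solve the core/gluing system: the operator is $\mcl - \gamma^2$ restricted to exponentially weighted spaces (where $0$ is in the resolvent set at $\gamma = 0$, by Hypotheses~\ref{hyp: spreading speed}--\ref{hyp: resonances}), augmented by the finite-dimensional matching conditions; by analytic perturbation theory in $\gamma^2$ and the inhomogeneity's Lipschitz dependence on $\gamma$ (inherited from the asymptotic resolvents), the solution $(w_-, w_+, v)$ depends Lipschitz-continuously on $\gamma$, i.e. $\|(\cdot)(\gamma) - (\cdot)(0)\| \le C|\gamma|\|g\|_{L^2_r}$. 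Fifth, reassemble $u$ and collect the weighted norms: the right piece contributes an $H^{2m-1}_{-r}$ bound via Proposition~\ref{p: right resolvent estimate}, the left and core pieces contribute bounds in exponentially weighted spaces which embed into $H^{2m-1}_{-r}$, and summing gives the claim.

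The main obstacle I expect is the bookkeeping in the gluing step: one must verify that the finite-dimensional matching operator is invertible at $\gamma=0$ precisely under Hypothesis~\ref{hyp: resonances}, and --- more subtly --- that the $1/\gamma$ pole of $R^+(\gamma)$ (from the Jordan block, Lemma~\ref{l: projection pole}) does not obstruct Lipschitz dependence of the \emph{assembled} solution $u$. The resolution is that the singular $c_{2m}^{-1}(\beta/\gamma)$ mode of $G^+_\gamma$ is (up to $\mathrm{O}(\gamma)$) a constant multiple of a bounded solution of $\mcl_+ u = 0$ --- essentially the linearly-growing-at-$+\infty$ mode restricted through the cutoff --- so it is a genuine ``far-field'' mode absorbed into the gluing ansatz rather than a contribution to $u$ itself; when $g$ is odd this is exactly the cancellation $e^{-\nu_0\gamma|x-y|} - e^{-\nu_0\gamma|x+y|}$ exploited via $G^{\mathrm{odd}}_\gamma$ in Proposition~\ref{p: right resolvent estimate}, and in the general (not-necessarily-odd) far-field/core setting it is the matching against the Jordan block that removes the pole. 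Once this cancellation is correctly tracked, the remaining estimates are routine applications of Young's inequality, Cauchy--Schwarz, and analytic perturbation theory. A secondary, more technical point is the choice of the exponential weight $\eta > 0$ for the core space: it must be small enough that $0$ remains in the resolvent set of $\mcl$ on $H^{2m}_{\mathrm{exp},\eta}$ (using that the Fredholm borders move continuously in $\eta$, as in Lemma~\ref{l: left resolvent estimates}) yet compatible with the algebraic weight $\rho_{-r}$ on the right, which is automatic since exponential decay beats any algebraic weight.
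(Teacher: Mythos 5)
Your proposal follows essentially the same route as the paper: a left/center/right splitting of the data, the asymptotic resolvents of Proposition~\ref{p: right resolvent estimate} and Lemma~\ref{l: left resolvent estimates} for the far-field pieces, and a far-field/core ansatz $u^c = w + a\,\chi_+ e^{\nu^-(\gamma)\,\cdot}$ whose invertibility at $\gamma=0$ follows from the no-resonance assumption of Hypothesis~\ref{hyp: resonances} via the Fredholm bordering lemma. The one point you leave vague --- how to ensure the data fed into $R^+(\gamma)$ is odd --- is resolved in the paper simply by replacing $g_+$ (supported on $x\ge 2$) with its odd extension $g_+^{\mathrm{odd}}(x)=g_+(x)-g_+(-x)$ and multiplying the resulting solution $u^+$ by $\chi_+$, so the delicate $1/\gamma$-pole cancellation you anticipate for general data never has to be tracked.
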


If this proposition holds, we write $(\mcl - \gamma^2)^{-1} = R_0 + \mathrm{O}(\gamma)$ in $\mathcal{B}(L^2_r (\R), H^{2m-1}_{-r}(\R))$. The aim of our approach is to first solve on the left and on the right with the asymptotic operators by decomposing the data and the solution appropriately, leaving an equation on the center $(\mcl-\gamma^2) u^c = \tilde{g}$ with exponentially localized data. We then solve this equation with a far-field/core decomposition as in \cite{PoganScheel} to obtain our estimates.

Specifically, we let $(\chi_-, \chi_c, \chi_+)$ be a partition of unity on $\R$, with $\chi_+$ satisfying \eqref{e: chiplus} and $\chi_- (x) = \chi_+ (-x)$, so that $\chi_c$ is compactly supported. We use this partition of unity to decompose our data $g$ into a ``left piece'', a ``center piece'', and a ``right piece'' by writing 
\begin{align*}
g = \chi_- g + \chi_c g + \chi_+ g =: g_- + g_c + g_+. 
\end{align*}
We would like to decompose our solution accordingly into $u = u^- + u^c + u^+$, with $u^-$ and $u^+$ solving $(\mcl_\pm-\gamma^2) u^\pm = g_\pm$, and with the remaining piece $(\mcl - \gamma)^2 u^c = g_c$ having strongly localized data. However, we need to refine this decomposition slightly in order to obtain sharp estimates. As we saw in Section \ref{sec: asymptotic resolvents}, the behavior of $(\mcl_+ - \gamma^2)^{-1}$ is much improved when acting on odd functions. Therefore, we let $g^\mathrm{odd}_+(x) = g_+ (x) - g_+(-x)$ be the odd part of $g_+$, and let $u_+$ be the solution to 
\begin{align}
(\mcl_+ - \gamma^2) u^+ = g_+^\mathrm{odd}.  \label{e: ff core right}
\end{align}
We let $u^-$ be the solution to
\begin{align}
(\mcl_- - \gamma^2) u^- = g_-.  \label{e: ff core left}
\end{align}
We decompose the solution $u$ to $(\mcl - \gamma^2) u = g$ as $u = u^- + u^c + \chi_+ u^+$. The additional cutoff function on $u^+$ is so that we do not have to require algebraic localization on the left when using Proposition \ref{p: right resolvent estimate}. After a short computation, one finds that $u^c$ must solve
\begin{align}
(\mcl - \gamma^2)u^c = \tilde{g}(\gamma), \label{e: u c eqn}
\end{align}
where 
\begin{align}
\tilde{g}(\gamma) := g_c + (\chi_+ - \chi_+^2) g - [\mcl_+, \chi_+] u^+ + (\mcl_+ - \mcl) (\chi_+ u^+) + (\mcl_- - \mcl)u^-, \label{e: tilde f def}
\end{align}
and $[\mcl_+, \chi_+]$ is the commutator 
\begin{align*}
[\mcl_+, \chi_+] u^+ = \mcl_+ (\chi_+ u^+) - \chi_+ (\mcl_+ u^+). 
\end{align*}
Note that $\tilde{g}(\gamma)$ is exponentially localized on the right, so that we may solve this equation using a far-field/core decomposition, taking advantage of the fact that $\mcl$ is a Fredholm operator on exponentially weighted spaces with small weights. The right hand side $\tilde{g}$ depends on $\gamma$ through $u^+$ and $u^-$, and we use the estimates in Section \ref{sec: asymptotic resolvents} to characterize this dependence in the following lemma. 
\begin{lemma}\label{l: farfield core rhs control}
	Let $r > 3/2$, and let $\eta > 0$ be small. For $\gamma$ small with $\gamma^2$ to the right of $\Sigma^+_{\eta_*}$, we have $\tilde{g} (\gamma) \in L^2_{\exp, \eta} (\R)$, and 
	\begin{align}
	\|\tilde{g}(\gamma) - \tilde{g}(0)\|_{L^2_{\exp, \eta}} \leq C |\gamma| \| g\|_{L^2_r}. 
	\end{align}
\end{lemma}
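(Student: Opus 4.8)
The plan is to handle the five summands of $\tilde g(\gamma)$ in \eqref{e: tilde f def} one at a time. The first two, $g_c=\chi_c g$ and $(\chi_+-\chi_+^2)g$, are independent of $\gamma$ and, since $\chi_c$ and $\chi_+(1-\chi_+)$ are compactly supported (the latter in $[2,3]$ by \eqref{e: chiplus}), they lie in $L^2_{\exp,\eta}(\R)$ with norm at most $C\|g\|_{L^2}\le C\|g\|_{L^2_r}$ and contribute nothing to $\tilde g(\gamma)-\tilde g(0)$. The remaining three terms carry the $\gamma$-dependence through $u^+=u^+(\gamma)$ solving \eqref{e: ff core right} and $u^-=u^-(\gamma)$ solving \eqref{e: ff core left}, and I would control them using Proposition \ref{p: right resolvent estimate} on the right and Lemma \ref{l: left resolvent estimates} on the left. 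Throughout, $\eta>0$ is fixed small enough that Lemma \ref{l: left resolvent estimates} applies and that $e^{\eta x}$ is dominated by the exponential rate at which $q_*(x)\to 0$ as $x\to\infty$ and $q_*(x)\to 1$ as $x\to-\infty$.

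For the commutator term $[\mcl_+,\chi_+]u^+$: since commuting a differential operator with a multiplication operator lowers the order by one, $[\mcl_+,\chi_+]$ has order $2m-1$ and coefficients supported in $[2,3]$, where $\omega_\eta$ and $\rho_{-r,-r}$ are bounded above and below; hence I would bound its $L^2_{\exp,\eta}$-norm by $C\|u^+(\gamma)-u^+(0)\|_{H^{2m-1}_{-r,-r}}$ and invoke Proposition \ref{p: right resolvent estimate} (with $u^+=R^+(\gamma)g_+^{\mathrm{odd}}$, $u^+(0)=R^+_0 g_+^{\mathrm{odd}}$), together with the elementary bound $\|g_+^{\mathrm{odd}}\|_{L^2_{r,r}}\le C\|g\|_{L^2_r}$ that follows from $g_+=\chi_+g$ being supported in $[2,\infty)$ and a change of variables in the left half-line. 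For the right matching term $(\mcl_+-\mcl)(\chi_+u^+)$: on the support $[2,\infty)$ of $\chi_+u^+$ the weight $\omega$ equals $e^{\eta_*x}$, so by \eqref{e: lplus def} and $c=c_*$ the operator $\mcl_+-\mcl$ reduces to multiplication by $f'(0)-f'(q_*(x))$, which decays exponentially; thus only $L^2_{-r,-r}$-control of $u^+$ is needed, which Proposition \ref{p: right resolvent estimate} supplies, giving an $\mathrm{O}(|\gamma|)$ bound. In both cases the $\gamma=0$ version gives membership of the corresponding piece of $\tilde g(0)$ in $L^2_{\exp,\eta}$.

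For the left matching term $(\mcl_--\mcl)u^-$: since $g_-=\chi_-g$ is compactly supported it lies in $L^2_{\exp,\eta}(\R)$ with $\|g_-\|_{L^2_{\exp,\eta}}\le C\|g\|_{L^2_r}$, and Lemma \ref{l: left resolvent estimates} gives analyticity of $u^-(\gamma)=(\mcl_--\gamma^2)^{-1}g_-$ in $\gamma^2$ into $H^{2m-1}_{\exp,\eta}(\R)$, hence $\|u^-(\gamma)-u^-(0)\|_{H^{2m-1}_{\exp,\eta}}\le C|\gamma|^2\|g\|_{L^2_r}$. The operator $\mcl_--\mcl$ is everywhere of order $\le 2m-1$, since conjugation by $\omega$ preserves the principal part $p_{2m}\partial_x^{2m}$ and only shifts $\partial_x\mapsto\partial_x-\eta_*$ on $[1,\infty)$, while on $(-\infty,-1]$ it equals multiplication by $f'(1)-f'(q_*(x))$; its coefficients are bounded and decay exponentially at $-\infty$. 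A routine commutator-with-$\omega_\eta$ estimate then yields $\|(\mcl_--\mcl)v\|_{L^2_{\exp,\eta}}\le C\|v\|_{H^{2m-1}_{\exp,\eta}}$, where the exponential decay of $v\in H^{2m-1}_{\exp,\eta}$ at $+\infty$ absorbs the growth of $\omega_\eta$. Applying this with $v=u^-(\gamma)-u^-(0)$ and with $v=u^-(0)$ completes this term; collecting all pieces and using $|\gamma|^2\le C|\gamma|$ for small $\gamma$ finishes the proof.

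The main obstacle is not a deep estimate but the order-and-weight bookkeeping: one must confirm that $[\mcl_+,\chi_+]$ and $\mcl_\pm-\mcl$ genuinely drop the top ($2m$-th) order, so that the $H^{2m-1}$-bounds furnished by Proposition \ref{p: right resolvent estimate} and Lemma \ref{l: left resolvent estimates} are exactly what is available and exactly what is needed, and one must verify compatibility of the exponential weight — namely that the exponential convergence of $q_*$ to its limits beats $e^{\eta x}$ in the matching terms and that the exponential localization of $u^-$ at $+\infty$ beats $e^{\eta x}$ in the left term. Choosing $\eta>0$ small enough to meet these constraints simultaneously with the hypothesis of Lemma \ref{l: left resolvent estimates} is the single delicate point.
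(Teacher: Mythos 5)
Your argument is correct and follows essentially the same route as the paper: a term-by-term treatment of \eqref{e: tilde f def}, with the first two terms $\gamma$-independent and compactly supported, Proposition \ref{p: right resolvent estimate} controlling the commutator and right matching terms (after checking that these operators have order $2m-1$ and exponentially localized coefficients), and Lemma \ref{l: left resolvent estimates} controlling the left matching term. One cosmetic slip: $g_-=\chi_- g$ is supported on the half-line $(-\infty,-2]$, not compactly, but the bound $\|g_-\|_{L^2_{\exp,\eta}}\le C\|g\|_{L^2_r}$ still holds since $\omega_\eta\equiv 1$ and $\rho_r\equiv 1$ there.
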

\begin{proof}
	The terms $g_c$ and $(\chi_+ - \chi_+^2) g$ in \eqref{e: tilde f def} are independent of $\gamma$ and are compactly supported by construction. The commutator $[\mcl_+, \chi_+]$ is a differential operator of order $2m-1$ with smooth compactly supported coefficients, since $\chi_+$ is constant outside a compact set, so $[\mcl_+, \chi_+] u^+$ is also compactly supported. Similarly, $(\mcl_+ - \mcl) (\chi_+ \cdot)$ is a differential operator of order $2m-1$ whose coefficients converge to zero exponentially quickly as $x \to \infty$, and are identically zero for $x$ negative. Hence, if $\eta$ is sufficiently small, 
	\begin{align*}
	\|\omega_\eta \left( - [\mcl_+, \chi_+] + (\mcl_+ - \mcl) \chi_+ \right) (u^+(\gamma) - u^+(0))\|_{L^2} &\leq C \| (u^+(\gamma) - u^+(0))\|_{H^{2m-1}_{-r, -r}} \\
	&\leq C |\gamma| \|g_+^\mathrm{odd}\|_{L^2_{r, r}} \\
	&\leq C |\gamma| \|g\|_{L^2_r}, 
	\end{align*}
	by Proposition \ref{p: right resolvent estimate}. Similarly, 
	\begin{align*}
	\|\omega_\eta (\mcl_- - \mcl) (u^-(\gamma) - u^-(0)) \|_{L^2} \leq C |\gamma| \|g_-\|_{L^2_{\mathrm{exp}, \eta}} \leq C |\gamma| \|g\|_{L^2_r}, 
	\end{align*}
	by Lemma \ref{l: left resolvent estimates}, using the fact that $g_-$ is supported only on the left, so the exponential weight on the right can be replaced by an algebraic weight.
\end{proof}

We now solve $(\mcl - \gamma^2) u^c = \tilde{g}$ by making the far-field/core ansatz
\begin{align}
u^c (x) = w(x) + a \chi_+ (x) e^{\nu^- (\gamma) x}, 
\end{align}
where $w \in H^{2m}_{\mathrm{exp, \eta}} (\R)$ is exponentially localized, $a \in \C$ is a complex parameter, and $\nu^- (\gamma)$ is the spatial eigenvalue given in \eqref{e: spatial evals}.  With this ansatz, the equation $(\mcl - \gamma^2) u^c = \tilde{g}$ becomes 
\begin{align}
F(w, a; \gamma) := \mcl w + a \mcl \left( \chi_+ e^{\nu^-(\gamma) \cdot} \right) - \gamma^2 (w + a \chi_+ e^{\nu^- (\gamma) \cdot}) = \tilde{g} \label{e: farfield core F def},
\end{align}
with the goal of solving for $w$ and $a$ with $\tilde{g}$ and $\gamma$ as variables. By Hypothesis \ref{hyp: spreading speed} and Palmer's theorem, $\mcl: H^{2m}_{\mathrm{exp}, \eta} (\R) \subseteq L^2_{\mathrm{exp}, \eta} (\R) \to L^2_{\mathrm{exp, \eta}} (\R)$ is a Fredholm operator with index -1. The addition of the extra parameter $a$ makes $(w, a) \mapsto F(w, a; \gamma)$ a Fredholm operator with index 0 for $\gamma$ small, by the Fredholm bordering lemma \cite[Lemma 4.4]{ArndBjornRelativeMorse}. The parameter $a$ is introduced in a manner which precisely captures the far-field behavior of $\mcl$ at $x = \infty$, which ultimately allows us to recover invertibility of $\mcl$ in this sense in a neighborhood of $\gamma = 0$. 

\begin{lemma}\label{l: farfield core invertibility}
	There exists $\delta > 0$ such that the map $F: H^{2m}_{\mathrm{exp}, \eta} (\R) \times \C \times B(0,\delta) \to L^2_{\mathrm{exp}, \eta} (\R)$ is well-defined and $(w, a) \mapsto F(w,a; \gamma)$ is invertible. We denote the solutions $(w,a)$ to \eqref{e: farfield core F def} by $w(\cdot; \gamma) = T(\gamma) \tilde{g}$ and $a(\gamma) = A(\gamma) \tilde{g}$. The maps 
	\begin{align*}
	\gamma \mapsto T(\gamma) : B(0, \delta) \to \mathcal{B} \left( L^2_{\mathrm{exp}, \eta} (\R), H^{2m}_{\mathrm{exp}, \eta} (\R) \right) 
	\end{align*}
	and 
	\begin{align*}
	\gamma \mapsto A(\gamma) : B(0, \delta) \to \mathcal{B} \left( L^2_{\mathrm{exp}, \eta} (\R), \C \right) 
	\end{align*}
	are analytic in $\gamma$. 
\end{lemma}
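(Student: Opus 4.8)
\emph{Proof plan.} The plan is to write $F$ in the form $F(w,a;\gamma) = (\mcl - \gamma^2)w + a\,\Phi(\gamma)$, where $\Phi(\gamma) := (\mcl-\gamma^2)(\chi_+ e^{\nu^-(\gamma)\cdot})$, and then to argue that $\gamma \mapsto F(\cdot,\cdot;\gamma)$ is an analytic family of Fredholm operators of index $0$ from $H^{2m}_{\mathrm{exp},\eta}(\R)\times\C$ to $L^2_{\mathrm{exp},\eta}(\R)$ which is invertible at $\gamma = 0$. Invertibility for small $\gamma$ and analyticity of $T,A$ then follow from openness of the set of invertible operators and analyticity of operator inversion on that set, since $T(\gamma)$ and $A(\gamma)$ are precisely the two coordinates of $F(\cdot,\cdot;\gamma)^{-1}$ composed with the bounded coordinate projections of $H^{2m}_{\mathrm{exp},\eta}(\R)\times\C$.

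First I would establish well-definedness, i.e.\ that $\Phi(\gamma)\in L^2_{\mathrm{exp},\eta}(\R)$ and depends analytically on $\gamma$. Since $\chi_+ e^{\nu^-(\gamma)\cdot}$ is supported in $\{x\ge 2\}$, where $\mcl$ coincides with the shifted operator $\mathcal P(\partial_x-\eta_*) + c_*(\partial_x-\eta_*) + f'(q_*)$, a direct computation using the defining relation $d^+(\gamma^2,\nu^-(\gamma)-\eta_*) = 0$ gives $\Phi(\gamma)(x) = \bigl(f'(q_*(x)) - f'(0)\bigr)e^{\nu^-(\gamma)x}$ for $x > 3$; this decays exponentially because $q_*(x)\to 0$ exponentially and $\Re\nu^-(\gamma)\le 0$ for $\gamma^2$ to the right of $\Sigma^+_{\eta_*}$, while $\Phi(\gamma)$ vanishes for $x\le 2$ and is smooth and uniformly bounded on $\{2\le x\le 3\}$ (the derivatives of $\chi_+$ contribute only compactly supported terms). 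Taking $\eta$ below the decay rate of $f'(q_*)-f'(0)$ then makes $\omega_\eta\Phi(\gamma)$ square integrable, and differentiating in $\gamma$ only produces polynomial-in-$x$ prefactors that are absorbed by the exponential decay, so $\gamma\mapsto\Phi(\gamma)$ is analytic into $L^2_{\mathrm{exp},\eta}(\R)$. Combined with the polynomial dependence of $(\mcl-\gamma^2)w$ on $\gamma$, this yields analyticity of $\gamma\mapsto F(\cdot,\cdot;\gamma)$ into $\mathcal B(H^{2m}_{\mathrm{exp},\eta}(\R)\times\C,\, L^2_{\mathrm{exp},\eta}(\R))$.

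Next I would pin down the Fredholm index and invertibility at $\gamma=0$. By Hypothesis~\ref{hyp: spreading speed} and Palmer's theorem, $\mcl: H^{2m}_{\mathrm{exp},\eta}(\R)\to L^2_{\mathrm{exp},\eta}(\R)$ is Fredholm of index $-1$ for $\eta$ small, so by the Fredholm bordering lemma \cite[Lemma 4.4]{ArndBjornRelativeMorse} the bordered map $(w,a)\mapsto F(w,a;\gamma)$ is Fredholm of index $0$ for $\gamma$ small. For injectivity at $\gamma=0$: since $\nu^-(0)=0$ we have $F(w,a;0) = \mcl(w + a\chi_+)$, so if this vanishes then $u := w + a\chi_+$ is a bounded pointwise solution of $\mcl u = 0$ — bounded because $w\in H^{2m}_{\mathrm{exp},\eta}(\R)\hookrightarrow L^\infty(\R)$ and $\chi_+$ is bounded — hence $u\equiv 0$ by Hypothesis~\ref{hyp: resonances}; then $w = -a\chi_+$ forces $a = 0$ (as $\chi_+\notin L^2_{\mathrm{exp},\eta}(\R)$) and so $w=0$. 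A Fredholm operator of index $0$ with trivial kernel is invertible, so $F(\cdot,\cdot;0)$ is invertible, and continuity of $\gamma\mapsto F(\cdot,\cdot;\gamma)$ then gives $\delta>0$ with $F(\cdot,\cdot;\gamma)$ invertible for $\gamma\in B(0,\delta)$ and $\gamma\mapsto F(\cdot,\cdot;\gamma)^{-1}$ analytic there, whence $T$ and $A$ are analytic.

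The main obstacle is really the bookkeeping in the well-definedness step: one must notice that the cancellation $d^+(\gamma^2,\nu^-(\gamma)-\eta_*) = 0$ kills the non-localized part of $\mcl(\chi_+ e^{\nu^-(\gamma)\cdot})$, so that $\Phi(\gamma)$ is exponentially localized uniformly for $\gamma$ small and analytic in $\gamma$ in $L^2_{\mathrm{exp},\eta}(\R)$ — this is exactly why the parameter $a$ has been appended with the mode $e^{\nu^-(\gamma)\cdot}$. Once that is in place, the index count is immediate from Palmer's theorem and the bordering lemma, and the invertibility reduces to injectivity, which is a clean application of the no-resonance Hypothesis~\ref{hyp: resonances}.
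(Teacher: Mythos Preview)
Your proposal is correct and follows essentially the same approach as the paper: establish that $\Phi(\gamma)=(\mcl-\gamma^2)(\chi_+ e^{\nu^-(\gamma)\cdot})$ lands in $L^2_{\mathrm{exp},\eta}$ via the cancellation $(\mcl_+-\gamma^2)e^{\nu^-(\gamma)\cdot}=0$, note analyticity in $\gamma$, use Palmer's theorem plus the bordering lemma for index $0$, and rule out a kernel at $\gamma=0$ via the no-resonance hypothesis. One small remark: your justification for decay of $\Phi(\gamma)$ invokes $\Re\nu^-(\gamma)\le 0$, which only holds for $\gamma^2$ to the right of $\Sigma^+_{\eta_*}$, whereas the lemma covers all $\gamma\in B(0,\delta)$; the fix is simply that $|\nu^-(\gamma)|=\mathrm{O}(|\gamma|)$ is small, so any growth of $e^{\nu^-(\gamma)x}$ is dominated by the fixed exponential decay of $f'(q_*)-f'(0)$, exactly as the paper's phrase ``exponentially localized uniformly in $\gamma$'' encodes.
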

\begin{proof}
	The fact that $F$ is well-defined and maps into $L^2_{\mathrm{exp}, \eta} (\R)$ follows from writing 
	\begin{align*}
	(\mcl - \gamma^2) (\chi_+ e^{\nu- (\gamma) \cdot} ) = \chi_+ (\mcl - \mcl_+) e^{\nu^- (\gamma) \cdot} + [\mcl, \chi_+] e^{\nu^- (\gamma) \cdot},
	\end{align*}
	using $(\mcl_+-\gamma^2) e^{\nu^-(\gamma) x} = 0$. The commutator $[\mcl, \chi_+]$ has compactly supported coefficients, and the coefficients of $\mcl-\mcl_+$ decay exponentially as $x \to \infty$, so both of these terms are exponentially localized uniformly in $\gamma$, and so $F$ maps into $L^2_{\mathrm{exp}, \eta} (\R)$. 
	
	Note next that $\gamma \mapsto F(\cdot, \cdot; \gamma)$ is analytic in $\gamma$ as a family of bounded operators. This is formally clear from the fact that $\nu^- (\gamma)$ is analytic in $\gamma$; for a rigorous justification, see the proof of Proposition 5.11 in \cite{PoganScheel}. Since we have already observed that $(w, a) \mapsto F(w,a; \gamma)$ is Fredholm with index 0 for $\gamma \in B(0,\delta)$ for some $\delta$ small, to prove the lemma it suffices by the analytic Fredholm theorem to check that $(w, a) \mapsto F(w,a; 0)$ is invertible. Since $(w,a) \mapsto F(w,a;0)$ is Fredholm index 0, we only need to check that $F(w,a; 0)$ has no kernel. Suppose that there is a kernel. Then, from \eqref{e: farfield core F def}, we have $\mcl (w + a \chi_+) = 0$
	for some $w \in H^{2m}_{\mathrm{exp}, \eta} (\R)$, $a \in \C$. The function $w + a \chi_+$ is bounded, so this implies $\mcl$ has a resonance at $0$, contradicting Hypothesis \ref{hyp: resonances}. Hence $(w,a) \mapsto F(w,a; 0)$ is invertible, and the lemma follows from the analytic Fredholm theorem. 
\end{proof}

\begin{proof}[Proof of Proposition \ref{p: full resolvent order 1}]
	By the above, the solution to $(\mcl - \gamma^2) u =g$ can be decomposed as $u = u^- + u^c + \chi_+ u^+$, where $u^-$, $u^+$, and $u^c$ solve \eqref{e: ff core left}, \eqref{e: ff core right} and \eqref{e: u c eqn} respectively. Lemma \ref{l: left resolvent estimates} and Proposition \ref{p: right resolvent estimate} imply the desired estimates for $u^-$ and $u^+$, so we only need to estimate the $\gamma$ dependence of $u^c$. By Lemma \ref{l: farfield core invertibility}, for $\gamma \in B(0,\delta)$, $u^c$ is given by 
	\begin{align*}
	u^c (\gamma) = T(\gamma) \tilde{g}(\gamma) + A(\gamma) \tilde{g}(\gamma) \chi_+ e^{\nu^-(\gamma) \cdot},
	\end{align*}
	and so 
	\begin{multline}
	\|u^c(\gamma) - u^c (0)\|_{H^{2m-1}_{-r}} \leq \|T(\gamma) \tilde{g}(\gamma) - T(0) \tilde{g}(0)\|_{H^{2m-1}_{-r}} \\ +  \|A(\gamma) \tilde{g}(\gamma) \chi_+ e^{\nu^- (\gamma) \cdot} - A(0) \tilde{g}(0) \chi_+\|_{H^{2m-1}_{-r}} \label{e: u center estimate}
	\end{multline}
	For the first term, we write
	\begin{align*}
	T(\gamma) \tilde{g}(\gamma) - T(0) \tilde{g}(0) = (T(\gamma)-T(0))\tilde{g}(\gamma) + T(0)(\tilde{g}(\gamma) - \tilde{g}(0)),
	\end{align*}
	and then estimate, using Lemma \ref{l: farfield core invertibility} to expand $T(\gamma)$ and Lemma \ref{l: farfield core rhs control} to control $\tilde{g}(\gamma)$,
	\begin{align*}
	\|(T(\gamma) - T(0)) \tilde{g}(\gamma)\|_{H^{2m-1}_{-r}} \leq C \|(T(\gamma) - T(0)) \tilde{g}(\gamma)\|_{H^{2m}_{\mathrm{exp}, \eta}} &\leq C |\gamma| \| \tilde{g}(\gamma) \|_{L^2_{\mathrm{exp}, \eta}} 
	\leq C|\gamma| \| g \|_{L^2_r}. 
	\end{align*}
	Similarly, we obtain 
	\begin{align*}
	 \|T(0)(\tilde{g}(\gamma)-\tilde{g}(0))\|_{H^{2m-1}_{-r}} \leq C \|T(0) (\tilde{g}(\gamma) - \tilde{g}(0))\|_{H^{2m}_{\mathrm{exp},\eta}} &\leq C \|\tilde{g}(\gamma) - \tilde{g}(0)\|_{L^2_{\mathrm{exp}, \eta}} 
	 \leq C |\gamma| \| g \|_{L^2_r},
	\end{align*}
	and so $\|T(\gamma) \tilde{g}(\gamma) - T(0) \tilde{g}(0)\|_{H^{2m-1}_{-r}} \leq C | \gamma | \| g \|_{L^2_r}$. 
	For the second term in \eqref{e: u center estimate}, we have 
	\begin{multline*}
	\hspace{-.2cm} \|A(\gamma) \tilde{g}(\gamma) \chi_+ e^{\nu^- (\gamma) \cdot} - A(0) \tilde{g} (0) \chi_+\|_{H^{2m-1}_{-r}} \leq \|e^{\nu^-(\gamma) \cdot} \chi_+ (A(\gamma) \tilde{g}(\gamma) - A(0) \tilde{g} (0))\|_{H^{2m-1}_{-r}} \\ + \|A(0) \tilde{g} (0) \chi_+ (1 - e^{\nu^- (\gamma) \cdot})\|_{H^{2m-1}_{-r}}.
	\end{multline*}
	Using Lemmas \ref{l: farfield core rhs control} and \ref{l: farfield core invertibility}, we obtain an estimate 
	\begin{align}
	|A(\gamma) \tilde{g}(\gamma) - A(0) \tilde{g}(0)| \leq C |\gamma| \|g\|_{L^2_r}. \label{e: A gamma minus A 0}
	\end{align}
	Since $e^{\nu^-(\gamma) x}$ is a bounded function for $\gamma^2$ to the right of the essential spectrum, and constants are controlled in $L^2_{-r}$ for $r > 1/2$, by \eqref{e: A gamma minus A 0} we conclude that
	\begin{align*}
	\|e^{\nu^-(\gamma) \cdot} \chi_+ (A(\gamma) \tilde{g}(\gamma) - A(0) \tilde{g} (0))\|_{H^{2m-1}_{-r}} \leq C |\gamma| \|g\|_{L^2_r}. 
	\end{align*}
	For the second term, we use the fact that $|1 - e^{\nu^-(\gamma) x}| \leq C |\nu^-(\gamma)| |x| \leq C |\gamma| |x|$ for $\gamma^2$ to the right of the essential spectrum. This term is controlled in $L^2_{-r}$ for $r>3/2$, so we have 
	\begin{align*}
	\|A(0)\tilde{g}(0) \chi_+ (1-e^{\nu^-(\gamma) \cdot})\|_{L^2_{-r}} \leq C |\gamma| \|g\|_{L^2_r}.
	\end{align*}
	The estimates on the derivatives in this term are easier, since taking derivatives gains factors of $\gamma$, and we can control $e^{\nu^- (\gamma) x}$ in $L^2_{-r}$ for $r > 1/2$. This completes the proof of the proposition. 
\end{proof}

\subsection{Higher order expansions and asymptotics of the Green's function}
The regularity of the resolvent obtained in Proposition \ref{p: full resolvent order 1} is sufficient to prove Theorem \ref{t: stability}, but in order to obtain the asymptotic description of the solution in Theorem \ref{t: asymptotics}, we need to expand the resolvent to higher order, in spaces of higher algebraic localization. Integrating along the contour that we will choose in Section \ref{s: semigroup estimates} will reveal that the part of the semigroup associated to the term $R_0$ in the expansion $(\mcl-\gamma^2)^{-1} = R_0 + \gamma R_1 + \mathrm{O}(\gamma^2)$ decays exponentially in time, and so the $t^{-3/2}$ decay stems from the term $\gamma R_1$. Hence, to identify the asymptotics of the solution, we both need to expand to higher order and identify the operator $R_1$. The first task proceeds as in Section \ref{ss: farfield core leading order}, simply keeping track of higher order $\gamma$ dependence using the relevant results from Section \ref{sec: asymptotic resolvents}, so we state these results without proof. To characterize $R_1$, we adapt our far-field/core approach to solve $(\mcl-\gamma^2) G_\gamma = - \delta_y$, constructing the resolvent kernel $G_\gamma$, and expanding it in $\gamma$ to determine $R_1$. 

\begin{lemma}\label{l: farfield core rhs 2nd order}
	Let $r > 5/2$, and let $\eta > 0$ be small. For $\gamma$ small with $\gamma^2$ to the right of $\Sigma^+_{\eta_*}$, we have $\tilde{g}(\gamma) \in L^2_{\mathrm{exp}, \eta} (\R)$, and 
	\begin{align*}
	\|\tilde{g} (\gamma) - \gamma \tilde{g}_1 - \tilde{g}(0)\|_{L^2_{\mathrm{exp}, \eta}} \leq C |\gamma|^2 \|g\|_{L^2_r}
	\end{align*}
	for some $\tilde{g}_1 \in L^2_{\exp, \eta} (\R)$. 
\end{lemma}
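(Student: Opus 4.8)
The plan is to track the $\gamma$-dependence through each term in the formula \eqref{e: tilde f def} for $\tilde{g}(\gamma)$, just as in the proof of Lemma \ref{l: farfield core rhs control}, but now retaining the first-order term in $\gamma$ rather than absorbing everything into an $\mathrm{O}(\gamma)$ remainder. Recall that the only $\gamma$-dependence in $\tilde{g}(\gamma)$ enters through $u^+(\gamma)$ and $u^-(\gamma)$, appearing in the three terms $-[\mcl_+, \chi_+] u^+$, $(\mcl_+ - \mcl)(\chi_+ u^+)$, and $(\mcl_- - \mcl) u^-$; the terms $g_c$ and $(\chi_+ - \chi_+^2) g$ are independent of $\gamma$.

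First I would handle the left piece $u^-(\gamma)$. By Lemma \ref{l: left resolvent estimates}, the map $\gamma \mapsto (\mcl_- - \gamma^2)^{-1}$ is analytic in $\gamma^2$ on $L^2_{\mathrm{exp}, \eta}(\R) \to H^{2m-1}_{\mathrm{exp}, \eta}(\R)$, hence analytic in $\gamma$, so $u^-(\gamma) = u^-(0) + \mathrm{O}(\gamma^2)$ in $H^{2m-1}_{\mathrm{exp}, \eta}(\R)$ — in fact the first-order term in $\gamma$ vanishes. Since $g_-$ is supported only on the left, the exponential weight on the right is harmless, and applying the differential operator $(\mcl_- - \mcl)(\cdot)$, whose coefficients are exponentially localized, yields $\|(\mcl_- - \mcl)(u^-(\gamma) - u^-(0))\|_{L^2_{\mathrm{exp},\eta}} \le C|\gamma|^2 \|g\|_{L^2_r}$. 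So this term contributes nothing to $\tilde{g}_1$.

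Next I would handle the right piece $u^+(\gamma)$, which is where the genuine first-order term arises. Since $r > 5/2$, the second part of Proposition \ref{p: right resolvent estimate} gives the expansion $u^+(\gamma) = R^+_0 g_+^{\mathrm{odd}} + \gamma R^+_1 g_+^{\mathrm{odd}} + \mathrm{O}(\gamma^2)$ in $H^{2m-1}_{-r,-r}(\R)$. Applying the fixed differential operators $-[\mcl_+, \chi_+]$ and $(\mcl_+ - \mcl)(\chi_+ \cdot)$ — which are of order $2m-1$ and have coefficients that are compactly supported, resp. exponentially decaying on the right and vanishing on the left — to this expansion and multiplying by $\omega_\eta$, I get
\begin{align*}
\|\omega_\eta\left(-[\mcl_+, \chi_+] + (\mcl_+ - \mcl)\chi_+\right)(u^+(\gamma) - u^+(0) - \gamma R^+_1 g_+^{\mathrm{odd}})\|_{L^2} \le C|\gamma|^2 \|g\|_{L^2_r},
\end{align*}
using that $\eta$ is small enough that the exponential gain in the coefficients beats the algebraic loss $\langle x\rangle^{-r}$, exactly as in Lemma \ref{l: farfield core rhs control}. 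Setting
\begin{align*}
\tilde{g}_1 := \left(-[\mcl_+, \chi_+] + (\mcl_+ - \mcl)\chi_+\right) R^+_1 g_+^{\mathrm{odd}},
\end{align*}
which lies in $L^2_{\mathrm{exp}, \eta}(\R)$ by the same localization argument, and combining with the $\mathrm{O}(\gamma^2)$ estimate for the left piece and the exact ($\gamma$-independent) terms, gives the claimed bound. The main obstacle is purely bookkeeping: one must check that each differential operator applied, when weighted by $\omega_\eta$, maps the algebraically-weighted output of Proposition \ref{p: right resolvent estimate} into $L^2$ with the exponential weight, which works precisely because the operator coefficients decay exponentially on the right (and $u^+$ is cut off by $\chi_+$ so nothing is required on the left) — there is no new analytic difficulty beyond what was already done at first order.
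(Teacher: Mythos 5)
Your proposal is correct and follows exactly the route the paper intends: the paper omits the proof, stating only that it "proceeds as in Section \ref{ss: farfield core leading order}, simply keeping track of higher order $\gamma$ dependence," and you carry this out by combining the second-order expansion of Proposition \ref{p: right resolvent estimate} (valid for odd data with $r>5/2$) for the $u^+$ terms with the analyticity in $\gamma^2$ of the left resolvent from Lemma \ref{l: left resolvent estimates}, which correctly kills the first-order contribution from $u^-$. Your explicit identification of $\tilde g_1$ and the weight-bookkeeping check ($\omega_\eta$ times exponentially decaying coefficients absorbing the algebraic weight $\langle x\rangle^{r}$) are exactly the details the paper leaves implicit.
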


Using Lemma \ref{l: farfield core rhs 2nd order}, we obtain the following refinement of Proposition \ref{p: full resolvent order 1}

\begin{prop}\label{p: full resolvent order 2}
	Let $r > 5/2$. There are constants $C > 0$ and $\delta > 0$ and an operator $R_1 : L^2_r (\R) \to H^{2m-1}_{-r} (\R)$ such that for any $g \in L^2_r (\R)$, the solution to $(\mcl - \gamma^2) u = g$ satisfies 
	\begin{align}
	\|u(\gamma) - \gamma u^1 - u(0)\|_{H^{2m-1}_{-r}} \leq C |\gamma| \|g\|_{L^2_r},
	\end{align}
	where $u^1 = R_1 g$, for all $\gamma \in B(0,\delta)$ with $\gamma^2$ to the right $\Sigma^+_{\eta_*}$. 
\end{prop}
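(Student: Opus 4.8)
The plan is to mirror the structure of the proof of Proposition \ref{p: full resolvent order 1}, now retaining one additional order in the $\gamma$-expansion at every step. I would write the solution as $u = u^- + u^c + \chi_+ u^+$ exactly as before, where $u^\pm$ solve \eqref{e: ff core left}, \eqref{e: ff core right} and $u^c$ solves \eqref{e: u c eqn}. The two asymptotic pieces are already handled: the second part of Proposition \ref{p: right resolvent estimate} gives the expansion $u^+(\gamma) = u^+(0) + \gamma R^+_1 g^{\mathrm{odd}}_+ + \mathrm{O}(\gamma^2)$ in $H^{2m-1}_{-r,-r}$ for $r > 5/2$, and Lemma \ref{l: left resolvent estimates} gives analyticity of $(\mcl_- - \gamma^2)^{-1}$ in $\gamma^2$, hence in particular $u^-(\gamma) = u^-(0) + \mathrm{O}(\gamma^2)$ with no first-order term (the first-order-in-$\gamma$ part vanishes because the dependence is through $\gamma^2$). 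So the work is to produce the second-order expansion of $u^c$.

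For the center piece I would again invoke the far-field/core decomposition $u^c = w + a\chi_+ e^{\nu^-(\gamma)\cdot}$ and the invertibility from Lemma \ref{l: farfield core invertibility}, which gives $u^c(\gamma) = T(\gamma)\tilde g(\gamma) + A(\gamma)\tilde g(\gamma)\,\chi_+ e^{\nu^-(\gamma)\cdot}$ with $T(\gamma), A(\gamma)$ analytic in $\gamma$. Expanding each factor to first order in $\gamma$ — $T(\gamma) = T(0) + \gamma T_1 + \mathrm{O}(\gamma^2)$, $A(\gamma) = A(0) + \gamma A_1 + \mathrm{O}(\gamma^2)$, $\tilde g(\gamma) = \tilde g(0) + \gamma \tilde g_1 + \mathrm{O}(\gamma^2)$ by Lemma \ref{l: farfield core rhs 2nd order}, and $e^{\nu^-(\gamma)x} = 1 + \nu^-{}'(0)\gamma x + \mathrm{O}(\gamma^2 x^2)$ using \eqref{e: spatial evals} — and collecting terms, the $\gamma^0$ term reproduces $u^c(0)$, the $\gamma^1$ term defines the linear correction, and the remainder is $\mathrm{O}(\gamma^2)$ in the appropriate norms. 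The only point requiring a little care is the $(1 - e^{\nu^-(\gamma)x})$-type term multiplying the constant $A(0)\tilde g(0)$: the leading correction $\nu^-{}'(0)\gamma x \chi_+(x)$ is a linearly growing function, which lies in $H^{2m-1}_{-r}$ precisely because $r > 5/2 > 3/2$, and the next error $\mathrm{O}(\gamma^2 x^2)$ lies in $L^2_{-r}$ for $r > 5/2$; derivatives only improve matters since they gain powers of $\gamma$. Assembling the $\gamma^1$ contributions from $u^+$, $u^-$ (none), and $u^c$ and cutting off appropriately defines the operator $R_1 : L^2_r(\R) \to H^{2m-1}_{-r}(\R)$, and summing the $\mathrm{O}(\gamma^2)$ remainders — which I would actually bound by $C|\gamma|^2 \le C|\gamma|$ on $B(0,\delta)$, matching the stated estimate — completes the proof.

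I expect the main obstacle to be purely bookkeeping rather than conceptual: correctly tracking the spatial weights through the product $A(\gamma)\tilde g(\gamma) \chi_+ e^{\nu^-(\gamma)\cdot}$, since the linearly growing factor $x\chi_+(x)$ sits right at the edge of $H^{2m-1}_{-r}$-integrability and forces the threshold $r > 5/2$ (rather than $r > 3/2$) once one also needs the $\mathrm{O}(\gamma^2 x^2)$ error to be square-integrable against $\langle x\rangle^{-r}$. A secondary point is to confirm that $\tilde g_1$ from Lemma \ref{l: farfield core rhs 2nd order} is genuinely in $L^2_{\mathrm{exp},\eta}(\R)$ so that $T(0)\tilde g_1$ and $A(0)\tilde g_1$ make sense and inherit exponential localization, but this is guaranteed by the structure of \eqref{e: tilde f def} together with the second-order expansions of $u^\pm$ supplied by Proposition \ref{p: right resolvent estimate} and Lemma \ref{l: left resolvent estimates}. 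Since the paper states this proposition "without proof," a short remark that the argument is the order-by-order analogue of the proof of Proposition \ref{p: full resolvent order 1}, with the expansions of Section \ref{sec: asymptotic resolvents} substituted in, is likely all that is warranted here.
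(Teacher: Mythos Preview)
Your proposal is correct and follows exactly the approach the paper indicates: the authors explicitly state this result without proof, noting that it ``proceeds as in Section~\ref{ss: farfield core leading order}, simply keeping track of higher order $\gamma$ dependence using the relevant results from Section~\ref{sec: asymptotic resolvents},'' and that it follows from Lemma~\ref{l: farfield core rhs 2nd order} as a refinement of Proposition~\ref{p: full resolvent order 1}. Your expansion of $u^-$, $u^+$, and $u^c$ to one more order, your identification of why the threshold $r > 5/2$ enters through the $\mathrm{O}(\gamma^2 x^2)$ remainder in the far-field term, and your observation that the stated bound $C|\gamma|$ is implied by the natural $C|\gamma|^2$ estimate on $B(0,\delta)$ are all on target.
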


To construct the resolvent kernel $G_\gamma$ with our far-field/core decomposition, we must view $F$ defined by \eqref{e: farfield core F def} as a map $F: H^{2m-1}_{\mathrm{exp}, \eta} (\R) \times \C \times B(0, \delta) \to H^{-1}_{\mathrm{exp}, \eta} (\R)$. First we show that $\mcl$ retains Fredholm properties when acting on these spaces. 

\begin{lemma}
	We can extend $\mcl$ to an operator from $H^{2m-1}_{\mathrm{exp}, \eta} (\R)$ to $H^{-1}_{\exp, \eta} (\R)$, and this operator is Fredholm with index -1. 
\end{lemma}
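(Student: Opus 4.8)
The plan is to show that $\mcl$, originally defined on $H^{2m}_{\mathrm{exp},\eta}(\R) \subseteq L^2_{\mathrm{exp},\eta}(\R)$, extends boundedly to a map $H^{2m-1}_{\mathrm{exp},\eta}(\R) \to H^{-1}_{\mathrm{exp},\eta}(\R)$, and then to identify its Fredholm index by a homotopy/duality argument, using that the original $\mcl: H^{2m}_{\mathrm{exp},\eta}(\R) \to L^2_{\mathrm{exp},\eta}(\R)$ is Fredholm with index $-1$ (as established via Hypothesis \ref{hyp: spreading speed} and Palmer's theorem in Section \ref{ss: farfield core leading order}). Boundedness of the extension is immediate: $\mcl = \mathcal{P}(\partial_x) + c\partial_x + f'(q_*)$ is a differential operator of order $2m$ with smooth, bounded coefficients that converge exponentially to constants at $\pm\infty$, so each monomial $\partial_x^k$ with $k \le 2m$ maps $H^{2m-1}_{\mathrm{exp},\eta} \to H^{2m-1-k}_{\mathrm{exp},\eta} \hookrightarrow H^{-1}_{\mathrm{exp},\eta}$ (the top-order term lands exactly in $H^{-1}$), and multiplication by the bounded coefficient functions preserves the weighted spaces since $\omega_\eta$-conjugation of $\partial_x$ only shifts by bounded amounts. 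Conjugating by the weight $\omega_\eta$ reduces everything to the unweighted statement $\mathcal{A}_\eta := \omega_\eta \mcl \omega_\eta^{-1} : H^{2m-1}(\R) \to H^{-1}(\R)$.

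For the Fredholm index, the cleanest route is Palmer's theorem applied on the scale of Sobolev spaces. Because $\mcl_\pm$, the limiting operators at $\pm\infty$, are hyperbolic as maps $H^{2m-1} \to H^{-1}$ precisely when they are hyperbolic as maps $H^{2m} \to L^2$ — hyperbolicity of the associated first-order system $\partial_x U = M(\gamma)U$ at $\gamma = 0$ is a statement about the matrices $M_\pm(0)$, independent of which Sobolev index one uses to set up the functional-analytic framework — the Fredholm property persists, and the index is computed from the same Morse-type count of stable/unstable dimensions at $\pm\infty$ and the Jordan block at $\nu = -\eta_*$. Since this count is identical to the one giving index $-1$ for $\mcl: H^{2m}_{\mathrm{exp},\eta} \to L^2_{\mathrm{exp},\eta}$, we conclude the extension also has index $-1$. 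Alternatively, one can argue by a homotopy: connect $\mcl$ on $H^{2m-1}\to H^{-1}$ to, say, $\mathcal{P}(\partial_x) + c\partial_x + \eta_*$-type asymptotically-constant reference operator through a path of operators all of which are hyperbolic at infinity (so Fredholm with locally constant index), and compute the index of the reference operator directly by Fourier analysis, checking it equals $-1$.

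The main obstacle is a bookkeeping one: verifying rigorously that the Fredholm/hyperbolicity machinery of Palmer's theorem, which is usually stated for the pair $(H^{2m}, L^2)$ or for bounded/continuous function spaces, transfers verbatim to the shifted pair $(H^{2m-1}, H^{-1})$ — one must check that $\mcl - \lambda$ acting between these spaces is still conjugate, via the exponential dichotomy of the first-order system, to the same projections and the same index formula, and that the negative-order target space $H^{-1}$ does not introduce spurious kernel or cokernel. The duality $ (H^{-1})^* = H^1_0 = H^1$ on $\R$ and the fact that the formal adjoint $\mcl^*$ is again an elliptic operator of the same type can be invoked to control the cokernel if needed. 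I expect this to be routine but slightly tedious, and the authors likely dispatch it by citing the exponential dichotomy framework of \cite{FiedlerScheel, KapitulaPromislow} together with the observation that all the relevant objects (the matrices $M_\pm(0)$, their spectral projections, the Jordan structure at $-\eta_*$) are unchanged.
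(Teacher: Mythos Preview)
Your outline is correct in spirit but takes a different route from the paper, and the paper's argument is considerably slicker. You propose to re-run Palmer's theorem / exponential dichotomy theory on the shifted Sobolev pair $(H^{2m-1}, H^{-1})$, and you correctly flag that the main work is the bookkeeping of checking the machinery transfers. The paper avoids this entirely by a conjugation trick: it sets $\tilde{\mcl} = \mcl + (\partial_x+1)^{-1}[\mcl,\partial_x+1]$ on the original pair $(H^{2m}_{\mathrm{exp},\eta}, L^2_{\mathrm{exp},\eta})$, observes that the commutator term is compact (because the coefficients of $\mcl$ have exponentially localized derivatives), so $\tilde{\mcl}$ is Fredholm index $-1$ as a compact perturbation of $\mcl$; then defines the extension as $\bar{\mcl} = (\partial_x+1)\,\tilde{\mcl}\,(\partial_x+1)^{-1}$. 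Since $\partial_x+1$ is an isomorphism $H^k_{\mathrm{exp},\eta} \to H^{k-1}_{\mathrm{exp},\eta}$ for every $k$, conjugation by it carries $\tilde{\mcl}$ to a Fredholm operator of the same index on $(H^{2m-1}_{\mathrm{exp},\eta}, H^{-1}_{\mathrm{exp},\eta})$, and a one-line computation shows $\bar{\mcl}$ agrees with $\mcl$ on $H^{2m}_{\mathrm{exp},\eta}$. This sidesteps any need to revisit Palmer or dichotomies on negative-order spaces. Your approach would work if carried out, but the paper's conjugation-by-isomorphism argument reduces the whole lemma to the already-known index on $(H^{2m}, L^2)$ plus compactness of one commutator, which is both shorter and more self-contained.
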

\begin{proof}
	First define $\tilde{\mcl} : H^{2m}_{\exp, \eta} (\R) \to L^2_{\exp, \eta} (\R)$ by
	\begin{align*}
	\tilde{\mcl} = \mcl + (\partial_x +1)^{-1} [\mcl, \partial_x+1]. 
	\end{align*}
	Using the fact that all derivatives of the coefficients of $\mcl$ are exponentially localized, one finds that $(\partial_x + 1)^{-1} [\mcl, \partial_x+1]$ is a compact operator from $H^{2m}_{\mathrm{exp}, \eta} (\R)$ to $L^2_{\mathrm{exp}, \eta} (\R)$, and so $\tilde{\mcl}$ is Fredholm with index $-1$ as a compact perturbation of $\mcl$. We then define $\bar{\mcl} : H^{2m-1}_{\mathrm{exp}, \eta} (\R) \to H^{-1}_{\mathrm{exp}, \eta} (\R)$ by 
	\begin{align*}
	\bar{\mcl} = (\partial_x+1)\tilde{\mcl} (\partial_x+1)^{-1}.
	\end{align*}
	One may readily verify that if $u \in H^{2m}_{\mathrm{exp}, \eta} (\R)$, then $\bar{\mcl} u = \mcl u$, and hence $\bar{\mcl}$ is an extension of $\mcl$. Since the operator $\partial_x+1: H^k_{\mathrm{exp}, \eta}(\R) \to H^{k-1}_{\mathrm{exp}, \eta} (\R)$ is invertible, $\bar{\mcl}$ is Fredholm with index -1, and so we have produced the desired extension. We now write $\mcl = \bar{\mcl}$, understanding that we are using this extension of $\mcl$. 
\end{proof}

Repeating the argument of Lemma \ref{l: farfield core invertibility} in these spaces, we find a solution to $(\mcl - \gamma^2)G_\gamma = - \delta_y$ with the form 
\begin{align}
G_\gamma (x, y) = w(x, y; \gamma) + a(y, \gamma) \chi_+ (x) e^{\nu^- (\gamma) x},. \label{e: G farfield core}
\end{align}
where $w(\cdot; y, \gamma) \in H^{2m-1}_{\mathrm{exp}, \eta} (\R)$ for some $\eta > 0$ small, and both $w$ and $a$ are analytic in $\gamma$. We therefore write $G_\gamma = G^0 + \gamma G^1 + \mathrm{O}(\gamma^2)$, for fixed $x$ and $y$. Since $G$ depends analytically on $\gamma$, $G^1$ must solve the equation $(\mcl - \gamma^2) G_\gamma = - \delta_y$ at order $\gamma$, which is 
\begin{align}
\mcl G^1(\cdot; y) = 0. \label{e: G1 eqn}
\end{align}
Expanding the right hand side of \eqref{e: G farfield core} in $\gamma$, one finds that $G^1$ is linearly growing at $\infty$, and localized on the left. As noted in Section \ref{ss: remarks}, there is only one solution, up to a constant multiple, to $\mcl u = 0$ which is linearly growing at $\infty$ and localized on the left. We denote this solution by $\psi$, fixing the normalization by requiring 
\begin{align}
\lim_{x\to \infty} \frac{\psi(x)}{x} = 1. \label{e: psi normalization}
\end{align}

Since $G^1$ solves \eqref{e: G1 eqn}, we conclude that $G^1$ must be proportional to $\psi$, but with constant allowed to depend on the parameter $y$, so we have 
\begin{equation}
G^1(x; y) = \psi (x) g^1(y) 
\end{equation}
for some function $g^1 (y)$. Altogether, since the expansion obtained in Proposition \ref{p: full resolvent order 2} and the solution given by integration against the resolvent kernel must agree for $\gamma^2$ to the right of $\Sigma^+_{\eta_*}$, we obtain the following lemma. 

\begin{lemma}\label{l: R1}
	The operator $R_1$ in the expansion 
	\begin{align*}
	(\mcl-\gamma^2)^{-1} = R_0 + \gamma R_1 + \mathrm{O}(\gamma^2) 
	\end{align*}
	in $\mathcal{B} (L^2_r (\R), H^{2m-1}_{-r} (\R))$ for $r > 5/2$ guaranteed by Proposition \ref{p: full resolvent order 2} is given by 
	\begin{equation*}
	R_1 g (x) = \psi (x) \int_\R g^1 (y) g(y) \, dy. 
	\end{equation*}
\end{lemma}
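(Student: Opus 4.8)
The plan is to compare two expressions for the solution $u(\gamma)$ of $(\mcl - \gamma^2)u = g$: the abstract expansion $u(\gamma) = R_0 g + \gamma R_1 g + \mathrm{O}(\gamma^2)$ from Proposition \ref{p: full resolvent order 2}, and the concrete representation obtained by integrating the resolvent kernel against $g$, namely $u(x;\gamma) = \int_\R G_\gamma(x,y)\, g(y)\, dy$. First I would recall that the far-field/core construction applied to $(\mcl - \gamma^2)G_\gamma = -\delta_y$ (the analogue of Lemma \ref{l: farfield core invertibility}, now posed on the spaces $H^{2m-1}_{\mathrm{exp},\eta}$ and $H^{-1}_{\mathrm{exp},\eta}$) yields $G_\gamma(x,y) = w(x;y,\gamma) + a(y,\gamma)\,\chi_+(x)\,e^{\nu^-(\gamma)x}$ with $w$ and $a$ analytic in $\gamma$; hence for fixed $x,y$ we may write $G_\gamma(x,y) = G^0(x;y) + \gamma\, G^1(x;y) + \mathrm{O}(\gamma^2)$, and this expansion is inherited (in the appropriate operator-norm sense) by $u(\gamma) = \int_\R G_\gamma(\cdot,y) g(y)\,dy$. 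Matching the $\mathrm{O}(1)$ and $\mathrm{O}(\gamma)$ terms against the expansion of Proposition \ref{p: full resolvent order 2} gives $R_0 g(x) = \int_\R G^0(x;y)g(y)\,dy$ and $R_1 g(x) = \int_\R G^1(x;y)g(y)\,dy$; the point is then to identify $G^1(\cdot;y)$.

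The next step is to pin down $G^1(\cdot;y)$ using the two facts already established in the excerpt. Differentiating the identity $(\mcl - \gamma^2)G_\gamma = -\delta_y$ in $\gamma$ and evaluating at $\gamma = 0$ shows that $G^1(\cdot;y)$ solves the homogeneous equation $\mcl\, G^1(\cdot;y) = 0$ (the $\delta_y$ and the $\gamma^2 G^0$ term contribute nothing at first order in $\gamma$). On the other hand, expanding the far-field term $a(y,\gamma)\chi_+(x)e^{\nu^-(\gamma)x}$ in $\gamma$ and using $\nu^-(\gamma) = -\nu_0\gamma + \mathrm{O}(\gamma^2)$ from \eqref{e: spatial evals}, together with the exponential localization of $w$, shows that $G^1(\cdot;y)$ is at most linearly growing at $+\infty$ and exponentially localized at $-\infty$. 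By the uniqueness statement discussed in Section \ref{ss: remarks} (any two such solutions would differ by a solution of $\mcl u = 0$ that is exponentially localized at $+\infty$, i.e. a resonance, which is excluded by Hypothesis \ref{hyp: resonances}), the space of such solutions is one-dimensional, spanned by $\psi$ normalized by \eqref{e: psi normalization}. Therefore $G^1(x;y) = \psi(x)\, g^1(y)$ for a scalar function $g^1(y)$ — which is exactly the coefficient appearing in the expansion of $a(y,\gamma)$ — and substituting into $R_1 g(x) = \int_\R G^1(x;y)g(y)\,dy$ yields the claimed formula.

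The main obstacle is making the passage from the pointwise-in-$(x,y)$ analyticity of $G_\gamma$ to the operator-norm expansion of $u(\gamma) = R_0g + \gamma R_1 g + \mathrm{O}(\gamma^2)$ in $\mathcal{B}(L^2_r, H^{2m-1}_{-r})$ fully rigorous: one needs that $G^1(\cdot;y)$, and hence $g^1$, has enough decay in $y$ that integration against $g \in L^2_r$ is well-defined and the remainder is genuinely $\mathrm{O}(\gamma^2)$ in the weighted norm. This is really a matter of tracking the $y$-dependence of $a(y,\gamma)$ through the bordered-operator construction and combining it with the algebraic-weight bookkeeping already carried out in Lemmas \ref{l: greens fcn remainder estimates}–\ref{l: dirichlet resolvent} and Proposition \ref{p: full resolvent order 2}; since Proposition \ref{p: full resolvent order 2} already asserts $R_1 : L^2_r \to H^{2m-1}_{-r}$ is bounded, the identification here is essentially forced once consistency of the two representations of $u(\gamma)$ is invoked, and the remaining work is bookkeeping rather than new analysis.
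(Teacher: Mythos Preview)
Your proposal is correct and follows essentially the same route as the paper: construct the resolvent kernel $G_\gamma$ via the far-field/core decomposition in the $H^{2m-1}_{\mathrm{exp},\eta}\to H^{-1}_{\mathrm{exp},\eta}$ setting, expand $G_\gamma = G^0 + \gamma G^1 + \mathrm{O}(\gamma^2)$, observe that $G^1(\cdot;y)$ solves $\mcl G^1 = 0$ and is linearly growing at $+\infty$ and localized on the left, invoke the uniqueness of such solutions to write $G^1(x;y) = \psi(x) g^1(y)$, and then match the kernel representation against the abstract expansion of Proposition~\ref{p: full resolvent order 2}. The paper treats the final matching step (consistency of the two representations of $u(\gamma)$) somewhat more tersely than you do, simply asserting that the two must agree for $\gamma^2$ to the right of the essential spectrum; your added remarks on what is needed to make this rigorous are accurate but, as you note, amount to bookkeeping already implicit in the preceding propositions.
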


If \eqref{e: spatial eval gap} holds, then as noted in Section \ref{ss: remarks} we must have $\psi (x) = \omega_{\eta_*} (x) q_*'(x)$. We can achieve the normalization condition \eqref{e: psi normalization} for instance by translating $q_*$ appropriately, without loss of generality.  

\section{Linear semigroup estimates}\label{s: semigroup estimates}

We now use the regularity of the resolvent obtained in Section \ref{s: full resolvent} in order to prove that the linear semigroup $e^{\mcl t}$ has the desired $t^{-3/2}$ decay, the essential step in proving Theorem \ref{t: stability}. Since $\mcl$ is sectorial \cite{Lunardi}, it generates an analytic semigroup through the contour integral 
\begin{align}
e^{\mcl t} = -\frac{1}{2\pi i} \int_\Gamma e^{\gamma^2 t} (\mcl - \gamma^2)^{-1} \, d(\gamma^2)
\end{align}
for a suitably chosen contour $\Gamma$. By Hypothesis \ref{hyp: resonances}, $\mcl$ has no unstable point spectrum, so the essential spectrum is the only obstacle to shifting the integration contour. Hypothesis \ref{hyp: spreading speed} guarantees that in $\gamma$, the Fredholm border which touches the origin may be parametrized as 
\begin{align}
\gamma(a) = i \gamma_1 a + \gamma_2 a^2 + \mathrm{O}(a^3) \label{e: essential spectrum gamma expansion}
\end{align}
for some real constants $\gamma_1, \gamma_2$.  To obtain optimal decay rates, we use the regularity of the resolvent near the origin to integrate along a contour which is tangent to the essential spectrum, which reveals the $t^{-3/2}$ decay rate. 

\begin{prop}\label{p: linear time decay}
	Let $r > 3/2$. There is a constant $C > 0$ such that the semigroup $e^{\mcl t}$ satisfies for $t > 0$
	\begin{align}
	\|e^{\mcl t}\|_{L^2_r \to H^{2m-1}_{-r}} \leq \frac{C}{t^{3/2}}. 
	\end{align}
\end{prop}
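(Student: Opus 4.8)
The plan is to construct the semigroup via the contour integral
\[
e^{\mcl t} = -\frac{1}{2\pi i} \int_\Gamma e^{\gamma^2 t} (\mcl - \gamma^2)^{-1} \, d(\gamma^2),
\]
and to choose the contour $\Gamma$ so that it hugs the essential spectrum near the origin. First I would fix a small $\delta > 0$ and split $\Gamma$ into a near-origin piece $\Gamma_{\mathrm{in}} \subseteq B(0,\delta)$ and a far piece $\Gamma_{\mathrm{out}}$. On $\Gamma_{\mathrm{out}}$, standard sectorial estimates (the resolvent is bounded in $\mathcal{B}(L^2_r, H^{2m-1}_{-r})$ there, with the usual $|\gamma|^{-2m}$-type decay in $|\gamma^2|$ away from the spectrum) give an exponentially small contribution $C e^{-ct}$ provided we push $\Gamma_{\mathrm{out}}$ slightly into the left half-plane, which is possible since by Hypothesis~\ref{hyp: resonances} there is no spectrum with $\Re\lambda \ge 0$ except the single touching point at the origin. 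So the entire decay rate comes from $\Gamma_{\mathrm{in}}$.

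For the near-origin piece, I would parametrize $\Gamma_{\mathrm{in}}$ in the $\gamma$-variable as a curve tangent to the Fredholm border \eqref{e: essential spectrum gamma expansion}, i.e. a curve of the form $\gamma(a) = i\gamma_1 a + (\gamma_2 + \kappa) a^2 + \mathrm{O}(a^3)$ for $|a| \le a_0$, with $\kappa > 0$ chosen so that $\gamma(a)^2$ lies strictly to the right of $\Sigma^+_{\eta_*}$ for $a \ne 0$ while still $\Re(\gamma(a)^2) \le -c a^4$ for some $c > 0$ (this is the key geometric point: the essential spectrum is parabolic in $\lambda$, so a contour tangent to it at the origin still has $\Re \lambda$ going quadratically negative in the arclength parameter of the parabola, which is $a^2$). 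On this contour, Proposition~\ref{p: full resolvent order 1} gives $(\mcl - \gamma^2)^{-1} = R_0 + \mathrm{O}(|\gamma|)$ in $\mathcal{B}(L^2_r, H^{2m-1}_{-r})$. I would change variables to $\gamma$, writing $d(\gamma^2) = 2\gamma \, \gamma'(a)\, da$, so the integrand carries an extra factor of $\gamma = \mathrm{O}(a)$. Then:
\begin{itemize}
\end{itemize}
the $R_0$-term contributes $\int e^{\gamma(a)^2 t}\, \mathrm{O}(a)\, da$; since $\Re(\gamma(a)^2) \le -c a^4$, this is bounded by $\int_{|a|\le a_0} e^{-c a^4 t} |a|\, da = \mathrm{O}(t^{-1/2})$ — actually one must be slightly more careful, as I will explain below.

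The subtlety is that the naive bound above only gives $t^{-1/2}$, not $t^{-3/2}$, so the gain of two extra powers must come from additional structure. The point is that the $R_0$-contribution should actually be seen to decay \emph{exponentially}, not just polynomially: $R_0$ is (up to the odd-extension bookkeeping) the limiting resolvent at $\gamma = 0$, and a closed contour argument shows that the part of the semigroup built from the $\gamma$-independent piece $R_0$ integrates to zero against $e^{\gamma^2 t}$ except for an exponentially small remainder — more precisely, I would write $(\mcl - \gamma^2)^{-1} = R_0 + \gamma \tilde R(\gamma)$ with $\|\tilde R(\gamma)\|_{L^2_r \to H^{2m-1}_{-r}}$ uniformly bounded on $\Gamma_{\mathrm{in}}$ (this is exactly the content of Proposition~\ref{p: full resolvent order 1}, which bounds $u(\gamma) - u(0)$ by $C|\gamma|\|g\|$), observe that $\int_{\Gamma} e^{\gamma^2 t} R_0 \, d(\gamma^2)$ can be deformed to a contour in $\Re \gamma^2 < 0$ and hence is $\mathrm{O}(e^{-ct})$, and then estimate the remaining term $\int_{\Gamma_{\mathrm{in}}} e^{\gamma(a)^2 t}\, \gamma(a)\, \gamma'(a)\, \tilde R(\gamma(a))\, 2\gamma(a)\, da$, whose integrand is $\mathrm{O}(a^2) e^{-c a^4 t}$, giving $\int_{|a| \le a_0} a^2 e^{-c a^4 t}\, da = \mathrm{O}(t^{-3/4})$. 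Hmm — that still is not $t^{-3/2}$.

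Let me reconsider: the correct scaling is that $\Re(\gamma(a)^2) \sim -a^2$ near the origin (the contour is tangent to the essential spectrum, which in the $\gamma$-plane is a curve through the origin along the imaginary axis to leading order, so $\gamma(a)^2 \approx -\gamma_1^2 a^2 + 2i\gamma_1(\gamma_2+\kappa) a^3 + \dots$, and the leading real part is $-\gamma_1^2 a^2$). Thus $e^{\gamma(a)^2 t}$ decays like $e^{-c a^2 t}$, and with the integrand $\mathrm{O}(a^2)$ (one power of $\gamma$ from $d(\gamma^2)$, one power from the $\gamma\tilde R(\gamma)$ expansion after the $R_0$-piece is removed by contour deformation) we get $\int_{|a|\le a_0} a^2 e^{-c a^2 t}\, da = \mathrm{O}(t^{-3/2})$, which is exactly the claimed rate. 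So the key steps, in order, are: (1) split $\Gamma = \Gamma_{\mathrm{out}} \cup \Gamma_{\mathrm{in}}$ and dispatch $\Gamma_{\mathrm{out}}$ by sectoriality and the absence of spectrum in $\Re\lambda \ge 0$ off the origin; (2) choose $\Gamma_{\mathrm{in}}$ tangent to the Fredholm border so that $\gamma^2$ stays to the right of $\Sigma^+_{\eta_*}$ (so Proposition~\ref{p: full resolvent order 1} applies) while $\Re(\gamma^2) \le -c a^2$; (3) use Proposition~\ref{p: full resolvent order 1} to write $(\mcl-\gamma^2)^{-1} = R_0 + \gamma\tilde R(\gamma)$ with $\tilde R$ uniformly bounded; (4) deform the $R_0$-part off into $\Re\gamma^2 < 0$ to see it is $\mathrm{O}(e^{-ct})$ — here one uses that $R_0$ is a fixed operator independent of $\gamma$, so $\int e^{\gamma^2 t} R_0\, d(\gamma^2) = R_0 \int e^{\gamma^2 t}\, d(\gamma^2)$ and the scalar integral is a Gaussian over a deformable contour; (5) estimate the remainder $\int_{\Gamma_{\mathrm{in}}} e^{\gamma^2 t}\, \gamma \tilde R(\gamma)\, d(\gamma^2)$ in operator norm by $C\int_{|a|\le a_0} |a|^2 e^{-ca^2 t}\, da \le C t^{-3/2}$, using $\|d(\gamma^2)\| = |2\gamma\gamma'(a)|\,|da| \le C|a|\,|da|$ and $\|\gamma\tilde R(\gamma)\| \le C|a|$.

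The main obstacle is step (4)/(5): one has to be careful that the operator-valued integral over $\Gamma_{\mathrm{in}}$ is only controlled in $\mathcal{B}(L^2_r, H^{2m-1}_{-r})$, which is the space in which Proposition~\ref{p: full resolvent order 1} provides estimates, and that the constant $R_0$ really can be pulled out and the scalar Gaussian-type integral deformed without re-encountering the essential spectrum — the tangency of $\Gamma_{\mathrm{in}}$ to $\Sigma^+_{\eta_*}$ means there is genuinely no room to the left, so the deformation of the $R_0$-piece must be done in the $\lambda = \gamma^2$ plane where $R_0$ is constant and the obstruction is only that $R_0$ is not itself the resolvent. A clean way to handle this is to note $\int_{\Gamma} e^{\gamma^2 t}\, d(\gamma^2) = 0$ when $\Gamma$ is (closed up at infinity in the appropriate sense to be) a contour enclosing no singularity of the constant integrand, so in fact the entire $R_0$-contribution over the full contour $\Gamma$ vanishes up to the exponentially small arcs closing it off, leaving only the $\gamma\tilde R(\gamma)$ remainder. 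Making this rigorous — matching the near and far pieces, controlling the closing arcs, and being careful with the branch cut of $\gamma = \sqrt\lambda$ along the negative real axis where $\Gamma_{\mathrm{in}}$ wraps around — is the technical heart of the argument.
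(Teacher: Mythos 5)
Your proposal is correct and follows essentially the same route as the paper: a contour tangent to the Fredholm border in the $\gamma$-plane, the expansion $(\mcl-\gamma^2)^{-1}=R_0+\mathrm{O}(\gamma)$ from Proposition \ref{p: full resolvent order 1}, exponential smallness of the $R_0$-contribution via endpoint evaluation (the paper phrases your contour-deformation step as recognizing $e^{\gamma(a)^2t}\gamma(a)\gamma'(a)$ as a total derivative in $a$), and the Gaussian estimate $\int a^2 e^{-ca^2t}\,da=\mathrm{O}(t^{-3/2})$ for the remainder. Your self-correction to $\Re(\gamma(a)^2)\sim -a^2$ lands on the right scaling, matching the paper's computation.
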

\begin{proof}
	For $\eps > 0$, we define our integration contour near the origin by  
	\begin{align*}
	\Gamma^0_\eps = \{ \gamma(a) = i a +  c_2 a^2 + \eps : a \in [-a_*, a_*] \},
	\end{align*}
	where $a_* > 0$ is small, and $c_2$ is chosen so that the limiting contour 
	\begin{align}
	\Gamma^0_0 = \{ \gamma (a) = i a + c_2 a^2 : a \in [-a_*, a_*] \}
	\end{align}
	is tangent to the essential spectrum in the $\gamma$-plane, touching it only at $\gamma = 0$ and staying to the right of it otherwise. The existence of such a $c_2$ is guaranteed by \eqref{e: essential spectrum gamma expansion}. We define these contours in the $\gamma$ plane, since it is natural to integrate in $\gamma = \sqrt{\lambda}$ in order to use the regularity of the resolvent in $\gamma$. We then let $\Gamma^\pm_\eps$ be continuations of $\Gamma^0_\eps$ out to infinity along straight lines in the left half $\lambda$-plane: see Figure \ref{fig: contours} for a depiction of these contours. We let $\Gamma_\eps$ denote the positively oriented concatenation of $\Gamma^-_\eps, \Gamma^0_\eps$, and $\Gamma^+_\eps$. By Proposition $(\mcl-\gamma^2)^{-1}$ is continuous at $\gamma = 0$ in $\mathcal{B}(L^2_r (\R), H^{2m-1}_{-r} (\R))$. Since it is also continuous on its resolvent set, and the limiting contour $\Gamma_0$ touches the spectrum of $\mcl$ only at $\gamma = 0$, this guarantees that $(\mcl - \gamma^2)^{-1}$ is continuous up to $\Gamma_0$. Together with sectoriality of $\mcl$ to control the behavior at large $\lambda$, this guarantees that the limit 
	\begin{align*}
	\lim_{\eps \to 0^+} -\frac{1}{2 \pi i} \int_{\Gamma_\eps} e^{\gamma^2 t} (\mcl - \gamma^2)^{-1} 2 \gamma \, d \gamma 
	\end{align*}
	exists in $\mathcal{B}(L^2_r (\R), H^{2m-1}_{-r} (\R))$. Since for every $\eps > 0$ the contour $\Gamma_\eps$ is in the resolvent set of $\mcl$, the value of this integral is independent of $\eps > 0$ by Cauchy's integral theorem. Hence we may write the semigroup using the integral over the limiting contour 
	\begin{align}
	e^{\mcl t} &= -\frac{1}{\pi i} \int_{\Gamma_0} e^{\gamma^2 t} (\mcl - \gamma^2)^{-1}  \gamma \, d \gamma \nonumber \\
	&= -\frac{1}{\pi i} \int_{\Gamma^0_0} e^{\gamma^2 t} (\mcl - \gamma^2)^{-1} \gamma \, d \gamma -  \sum_{\iota = \pm} \frac{1}{\pi i} \int_{\Gamma^\pm_0} e^{\gamma^2 t} (\mcl - \gamma^2)^{-1} \gamma \, d \gamma. \label{e: semigroup formula}
	\end{align}
	\begin{figure}
		\centering
		\includegraphics[width = 1\textwidth]{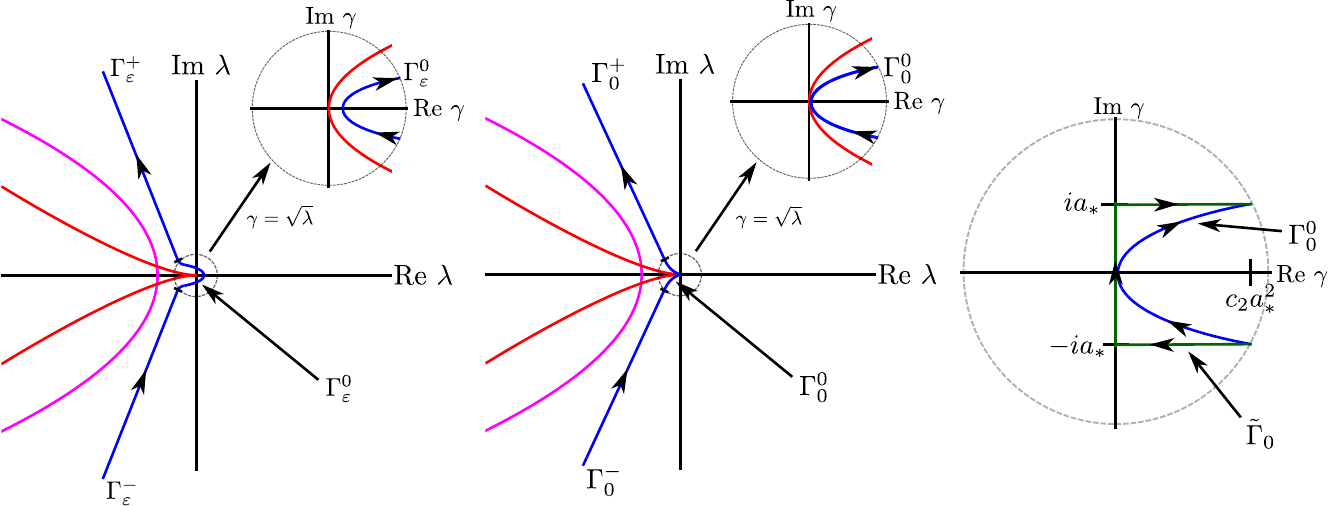}
		\caption{The Fredholm borders of $\mcl$ (magenta, red) together with our integration contours (blue), for $\eps > 0$ (left) and at the limit $\eps = 0$ (middle). The insets show the image of a neighborhood of the origin under the map $\gamma = \sqrt{\lambda}$. The rightmost inset shows the deformation of $\Gamma^0_0$ to $\tilde{\Gamma}_0$, the contour used in the proof of Proposition \ref{p: linear time asymptotics}.}
		\label{fig: contours}
	\end{figure}
	
	The integrals over $\Gamma^\pm_0$ are exponentially decaying in time, since each $\gamma^2$ along these contours is contained strictly in the left half plane and bounded away from the spectrum of $\mcl$. Using parabolic regularity \cite[Theorem 3.2.2]{Lunardi} to control the behavior of $(\mcl-\gamma^2)^{-1}$ for large $\gamma$, we readily obtain 
	\begin{align*}
	\left| \left| \frac{1}{\pi i} \int_{\Gamma_0^\pm} e^{\gamma^2 t} (\mcl - \gamma^2)^{-1} \gamma \, d \gamma \right| \right|_{L^2 \to H^{2m-1}} \leq C e^{-\mu t} 
	\end{align*}
	for some constants $C, \mu > 0$, which of course implies the same estimate in $L^2_r \to H^{2m-1}_{-r}$. 
	
	We now focus on the integral over $\Gamma^0_0$. We use Proposition \ref{p: full resolvent order 1} to write $(\mcl - \gamma^2)^{-1} = R_0 + \mathrm{O}(\gamma)$ in $\mathcal{B}(L^2_r (\R), H^{2m-1}_{-r} (\R))$ and explicitly parameterize the contour by $\gamma(a) = ia + c_2 a^2$ for $|a| \leq a_*$ to obtain
	\begin{align*}
	\frac{1}{\pi i} \int_{\Gamma_0^0} e^{\gamma^2 t} (\mcl - \gamma^2)^{-1} \gamma \, d \gamma &= \frac{1}{\pi i} \int_{-a_*}^{a_*} e^{\gamma(a)^2 t} (R_0 + \mathrm{O}(a)) \gamma(a) \gamma'(a) \, d a \\
	&= \frac{1}{\pi i} \int_{-a_*}^{a_*} \left[ \left( \frac{1}{2t} \partial_a e^{\gamma(a)^2 t} \right) R_0 + e^{\gamma (a)^2 t} E_0(a) \gamma (a) \gamma'(a) \right] da, 
	\end{align*}
	where we denote the $\mathrm{O}(a)$ terms by $E_0 (a)$. The first term is the integral of a total derivative, so
	\begin{align*}
	\frac{1}{\pi i} \int_{-a_*}^{a_*} \frac{1}{2t} \left( \partial_a e^{\gamma(a)^2 t} \right) R_0 \, da &= \frac{1}{2 \pi i} \frac{1}{t} R_0 \left( e^{\gamma(a_*)^2 t} - e^{\gamma(-a_*) t} \right) \\
	&= \frac{1}{2 \pi i} \frac{1}{t} R_0 e^{(-a_*^2 + c_2^2 a_*^4) t} \left( e^{2 i c_2 a_*^3 t} - e^{- 2 i c_2 a_*^3 t} \right) 
	\end{align*}
	We choose $a_*$ small enough so that $c_2^2 a_*^4 < \frac{a_*^2}{2}$ and hence 
	\begin{align}
	\left| \left| \frac{1}{2 \pi i} \frac{1}{t} \int_{-a_*}^{a_*}  \left( \partial_a e^{\gamma(a)^2 t} \right) R_0 \, da \right| \right|_{L^2_r \to H^{2m-1}_{-r}} \leq \frac{C}{t} e^{-a_*^2 t/2} \|R_0\|_{L^2_r \to H^{2m-1}_{-r}} \leq \frac{C}{t^{3/2}} \label{e: semigroup R0 estimate}
	\end{align}
	for $t > 0$. In fact, this contribution is exponentially decaying for $t$ large. We now estimate the second integral 
	\begin{align*}
	\left| \left|\int_{-a_*}^{a_*} e^{\gamma (a)^2 t} E_0(a) \gamma (a) \gamma'(a) \, da \right| \right|_{L^2_r \to H^{2m-1}_{-r}} &= \left| \left| \int_{-a_*}^{a_*} e^{(-a^2 + c_2^2 a^4) t} e^{2 i c_1 a^3 t} E_0 (a) \gamma (a) \gamma'(a) \, da \right| \right|_{L^2_r \to H^{2m-1}_{-r}}\\
	&\leq C \int_{-a_*}^{a_*} e^{-\frac{a^2}{2} t} |a|^2 \, da,
	\end{align*}
	for $a_*$ small. Changing variables to $z = \frac{a}{\sqrt{2}} \sqrt{t}$, we obtain 
	\begin{align*}
	\int_{-a_*}^{a_*} e^{- \frac{a^2}{2} t} a^2 \, da = \frac{C}{t^{3/2}} \int_{-a_* \sqrt{t/2}}^{a_* \sqrt{t/2}} e^{-z^2} z^2 \, dz \leq \frac{C}{t^{3/2}},
	\end{align*}
	which completes the proof of the proposition. 
\end{proof}

We now use the higher regularity of the resolvent obtained in Proposition \ref{p: full resolvent order 2} to identify the leading order asymptotics of $e^{\mcl t}$ as $t \to \infty$ by focusing on the term $\gamma R_1$ in the contour integral, since we have shown that the term associated to $R_0$ decays exponentially. 

\begin{prop}\label{p: linear time asymptotics}
	Let $r > 5/2$. Then the semigroup $e^{\mcl t}$ has the asymptotic expansion
	\begin{align}
	e^{\mcl t} = \frac{1}{2 \sqrt{\pi}} \frac{R_1}{t^{3/2}} + \mathrm{O}(t^{-2}) 
	\end{align}
	as $t \to \infty$, in $\mathcal{B}(L^2_r (\R), H^{2m-1}_{-r} (\R))$. 
\end{prop}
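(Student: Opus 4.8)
The plan is to reuse the contour $\Gamma_0 = \Gamma_0^- \cup \Gamma_0^0 \cup \Gamma_0^+$ from the proof of Proposition \ref{p: linear time decay}, but now insert the second-order expansion $(\mcl-\gamma^2)^{-1} = R_0 + \gamma R_1 + \mathrm{O}(\gamma^2)$ from Proposition \ref{p: full resolvent order 2}, valid in $\mathcal{B}(L^2_r(\R), H^{2m-1}_{-r}(\R))$ for $r > 5/2$. As before, the integrals over $\Gamma_0^\pm$ contribute terms that are exponentially small in $t$, so everything comes down to the integral over $\Gamma_0^0$, which I parametrize by $\gamma(a) = ia + c_2 a^2$, $|a| \le a_*$. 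Splitting the integrand according to the expansion gives three pieces: the $R_0$-piece, which was already shown in \eqref{e: semigroup R0 estimate} to be an integral of a total derivative and hence $\mathrm{O}(t^{-1} e^{-a_*^2 t/2})$, i.e. negligible; the $\gamma R_1$-piece, which will produce the claimed leading term; and the $\mathrm{O}(\gamma^2)$-remainder piece, which I expect to bound by $C \int_{-a_*}^{a_*} e^{-a^2 t/2} |a|^3\, da = \mathrm{O}(t^{-2})$ after the substitution $z = a\sqrt{t/2}$, exactly as the $\mathrm{O}(a)$ term was handled with one fewer power in the proof of Proposition \ref{p: linear time decay}.

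The heart of the matter is the $\gamma R_1$-piece:
\begin{align*}
-\frac{1}{\pi i} \int_{-a_*}^{a_*} e^{\gamma(a)^2 t} \gamma(a) R_1\, \gamma(a) \gamma'(a)\, da = -\frac{R_1}{\pi i} \int_{-a_*}^{a_*} e^{\gamma(a)^2 t} \gamma(a)^2 \gamma'(a)\, da.
\end{align*}
Since $R_1$ is a fixed bounded operator, I pull it out and am left with a scalar integral. I would write $\gamma(a)^2 = -a^2 + 2ic_2 a^3 + c_2^2 a^4$ and $\gamma'(a) = i + 2c_2 a$, expand the product $\gamma(a)^2\gamma'(a) = -ia^2 + (\text{higher order in } a)$, and observe that the leading contribution comes from $e^{-a^2 t}(-ia^2)(i)\,da = a^2 e^{-a^2 t}\,da$, while all the correction terms carry at least an extra factor of $a$ (giving $\mathrm{O}(t^{-2})$ after rescaling) or an extra factor of $a^2$ from $c_2^2 a^4$ in the exponent (harmless after Taylor expanding $e^{c_2^2 a^4 t}$ against the Gaussian for $a_*$ small, since $c_2^2 a^4 \le a^2/2$). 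Thus the leading term is $-\frac{R_1}{\pi i} \cdot (-i) \int_{-a_*}^{a_*} a^2 e^{-a^2 t}\,da$; extending the integral to all of $\R$ (the tail is exponentially small) and using $\int_\R a^2 e^{-a^2 t}\,da = \frac{\sqrt{\pi}}{2} t^{-3/2}$ gives $\frac{R_1}{\pi} \cdot \frac{\sqrt{\pi}}{2} t^{-3/2} = \frac{1}{2\sqrt{\pi}} R_1 t^{-3/2}$, matching the claim.

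The one point requiring care is making precise that the contour $\Gamma_0^0$ really is tangent to the essential spectrum, so that on it $\Re \gamma(a)^2 = -a^2 + c_2^2 a^4 \le -a^2/2$ and all the Gaussian estimates go through uniformly; this is exactly the property built into the choice of $c_2$ via \eqref{e: essential spectrum gamma expansion} and already used in Proposition \ref{p: linear time decay}, so no new work is needed. I also need to justify interchanging the operator $R_1$ with the integral and replacing $(\mcl-\gamma^2)^{-1}$ by its expansion uniformly along $\Gamma_0^0$, which follows from Proposition \ref{p: full resolvent order 2} together with continuity of $(\mcl-\gamma^2)^{-1}$ up to the limiting contour, exactly as in the previous proof.

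\textbf{Main obstacle.} I expect the genuinely delicate bookkeeping — rather than a conceptual obstacle — to be organizing the expansion of $e^{\gamma(a)^2 t}\gamma(a)^2\gamma'(a)$ so that every term beyond the leading $a^2 e^{-a^2 t}$ is transparently $\mathrm{O}(t^{-2})$: one must handle the oscillatory factor $e^{2ic_2 a^3 t}$ (bounded in modulus, so it only helps), the polynomial corrections in $a$ (each extra power of $a$ costs a factor $t^{-1/2}$ after rescaling), and the subexponential Gaussian correction $e^{c_2^2 a^4 t}$ (controlled by choosing $a_*$ small). None of these is hard individually, but assembling them cleanly into a single $\mathrm{O}(t^{-2})$ bound, together with the $\mathrm{O}(\gamma^2)$ resolvent remainder, is where the proof spends its effort.
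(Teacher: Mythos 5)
Your proposal is correct and follows the paper's scaffolding exactly up to the decisive step: same contour $\Gamma_0$, same insertion of the second-order expansion from Proposition \ref{p: full resolvent order 2}, same identification of the $R_0$-piece as exponentially small and the $\mathrm{O}(\gamma^2)$-piece as $\mathrm{O}(t^{-2})$ via $\int |a|^3 e^{-a^2t/2}\,da$. Where you diverge is in evaluating the $\gamma R_1$ integral. The paper integrates by parts, using $e^{\gamma(a)^2t}\gamma(a)\gamma'(a) = \frac{1}{2t}\partial_a e^{\gamma(a)^2t}$, which reduces the problem to $\frac{1}{t}\int_{-a_*}^{a_*} e^{\gamma(a)^2t}\gamma'(a)\,da$ --- a contour integral of the entire function $e^{z^2t}$ --- and then deforms the contour onto the imaginary axis (the contour $\tilde\Gamma_0$ of Figure \ref{fig: contours}), where the integral is an exact Gaussian; the constant $\frac{1}{2\sqrt\pi}$ falls out with no expansion of the integrand. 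You instead expand $\gamma(a)^2\gamma'(a)$ and $e^{\gamma(a)^2t}$ directly in the parametrization. Both routes give the same (correct) constant, but yours carries one extra obligation that your write-up slightly underplays: to extract the leading term you cannot merely observe that $e^{2ic_2a^3t}$ is unimodular, since you must replace $e^{\gamma(a)^2t}$ by $e^{-a^2t}$ \emph{in the leading integral}, which requires an estimate of the form $\int a^2 e^{-a^2t}\,|e^{(2ic_2a^3+c_2^2a^4)t}-1|\,da \le Ct\int|a|^5e^{-a^2t/2}\,da = \mathrm{O}(t^{-2})$, using $|e^w-1|\le|w|e^{|w|}$ and $a_*$ small. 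With that estimate supplied, your bookkeeping closes and the argument is complete; the paper's integration-by-parts trick is simply the cleaner way to avoid it.
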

\begin{proof}
	We proceed as in the proof of Proposition \ref{p: linear time decay}, using the same integration contour $\Gamma_0$. Using Proposition \ref{p: full resolvent order 2} to expand the resolvent to higher order, we have 
	\begin{align*}
	\frac{1}{\pi i} \int_{\Gamma^0_0} e^{\gamma^2 t} (\mcl - \gamma^2)^{-1} \gamma \, d \gamma = \frac{1}{\pi i} \int_{-a_*}^{a_*} e^{\gamma (a)^2 t} (R_0 + \gamma(a) R_1 + \mathrm{O}(a^2)) \gamma (a) \gamma'(a) \, d a. 
	\end{align*}
	The terms involving $R_0$ and $\mathrm{O}(a^2)$ decay at least as fast as $t^{-2}$, by the same arguments used in the proof of Proposition \ref{p: linear time decay}, so we focus on the term involving $R_1$. We integrate by parts to obtain 
	\begin{align*}
	\frac{1}{\pi i} \int_{-a_*}^{a_*} e^{\gamma (a)^2 t} (\gamma (a) R_1) \gamma (a) \gamma'(a) \, da &= \frac{1}{\pi i} \frac{1}{2 t} \int_{-a_*}^{a_*} (\partial_a  e^{\gamma(a)^2 t} ) (\gamma (a) R_1) \, da \\
	&= - \frac{1}{2 \pi i} \frac{1}{t}  \int_{-a_*}^{a_*} e^{\gamma (a)^2 t} \gamma'(a) R_1 \, da + \mathrm{O}(e^{- \mu t})
	\end{align*}
	for some $\mu> 0$. The boundary terms are exponentially decaying since we choose $a_*$ small enough so that $\Re \gamma (\pm a_*) < 0$. We recognize the remaining integral 
	\begin{align*}
	\int_{-a_*}^{a_*} e^{\gamma (a)^2 t} \gamma'(a) \, da 
	\end{align*}
	as a parameterization of the integral of $e^{z^2 t}$ over the contour $\Gamma^0_0$. Since $e^{z^2 t}$ is an entire function, we can deform this contour into another contour $\tilde{\Gamma}_0$ consisting of three straight line segments: one from $z = -ia_* + c_2 a_*^2$ to $z = -ia_*$, one along the imaginary axis from $z = -i a_*$ to $z = i a_*$, and one from $z = ia_*$ to $z = ia_* + c_2 a_*^2$. See the right panel of Figure \ref{fig: contours}. 

	The contributions from the lower and upper pieces of $\tilde{\Gamma}_0$ are both exponentially decaying in time, since $\Re \gamma^2$ is negative along these pieces. Hence, the dominant contribution is from the piece along the imaginary axis, and parameterizing this piece as $\gamma(a) = i a$, we have 
	\begin{align*}
	- \frac{1}{2 \pi i} \frac{1}{t}  \int_{-a_*}^{a_*} e^{\gamma (a)^2 t} \gamma'(a) R_1 \, da = - \frac{1}{2 \pi} \frac{1}{t} \int_{-a_*}^{a_*} e^{-a^2 t} \, da = - \frac{1}{2 \pi} \frac{1}{t^{3/2}} \int_{-a_* \sqrt{t}}^{a_* \sqrt{t}} e^{-w^2} \, dw. 
	\end{align*}
	The remaining integral attains its limit 
	\begin{align*}
	\int_{-a_* \sqrt{t}}^{a_* \sqrt{t}} e^{-w^2} \, dw \to \int_\R e^{-w^2} \, dw = \sqrt{\pi} 
	\end{align*}
	exponentially quickly as $t \to \infty$, so that altogether, we may write 
	\begin{align*}
	\frac{1}{\pi i} \int_{\Gamma^0_0} e^{\gamma^2 t} (\mcl - \gamma^2)^{-1} \gamma \, d \gamma = - \frac{1}{2 \pi} \frac{1}{t^{3/2}} \sqrt{\pi} R_1 + \mathrm{O}(t^{-2}),
	\end{align*}
	completing the proof of the proposition. 
\end{proof}

\section{Nonlinear stability -- proof of Theorem \ref{t: stability}}\label{s: nonlinear stability}
We write the nonlinear perturbation equation \eqref{e: v eqn} in the weighted space, by defining $p = \omega v$, from which we find
\begin{align}
p_t = \mcl p + \omega N(q_*, \omega^{-1} p), \label{e: perturbation eqn weighted}
\end{align}
where
\begin{align}
N(q_*, \omega^{-1} p) = f(q_* + \omega^{-1} p) - f(q_*) - f'(q_*) \omega^{-1} p. \label{e: nonlinearity}
\end{align}
The nonlinearity is extremely well behaved -- formally Taylor expanding, one sees 
\begin{align*}
\omega N(q_*, \omega^{-1} p) = \frac{f''(q_*)}{2} \omega^{-1} p^2 + \mathrm{O}(\omega^{-2} p^3).   
\end{align*}
In particular, the entire nonlinearity carries a factor of $\omega^{-1}$, and hence is exponentally localized, so we may use strong decay estimates on the nonlinear term in the variation of constants formula. The main difficulty has therefore already been resolved in proving sharp linear estimates in Proposition \ref{p: linear time decay}, and so we complete the proof of Theorem \ref{t: stability} in this section using a direct, classical argument, as used for instance in the proof of Theorem 1 of \cite{FayeHolzer}. 

The nonlinear equation \eqref{e: perturbation eqn weighted} is locally well-posed in $H^1_r (\R)$ for any $r \in \R$, by classical theory of semilinear parabolic equations \cite{Henry}: for initial data $p_0$ with $\|p_0\|_{H^1_r}$ sufficiently small, there exists a maximal existence time $T_* \in (0, \infty]$ and a solution $p(t)$ to \eqref{e: perturbation eqn weighted} defined up to time $T_*$, with $T_*$ depending only on $\|p_0\|_{H^1_r}$. We rewrite \eqref{e: perturbation eqn weighted} in mild form via the variation of constants formula 
\begin{align}
p(t) = e^{\mcl t} p_0 + \int_0^t e^{\mcl (t-s)} \omega N(q_*, \omega^{-1} p(s)) \, ds. 
\end{align}
Since the original nonlinearity $f$ in \eqref{e: eqn} is smooth, and $H^1 (\R)$ is a Banach algebra, it follows from Taylor's theorem that for any $s, r \in \R$, there is a nondecreasing function $K : \R_+ \to \R_+$ such that
\begin{align}
\|\omega N(q_*, \omega^{-1} p)\|_{H^1_s} \leq K(R) \|p\|_{H^1_r}^2, \label{e: nonlin quadratic}
\end{align}
if $\|\omega^{-1} p\|_{L^\infty} \leq R$. Here, the extra factor of $\omega^{-1}$ in the Taylor expansion of the nonlinearity is used to control the algebraic weights. 

We now fix $r > 3/2$ and define 
\begin{align}
\Theta (t) = \sup_{0 \leq s \leq t} (1+s)^{3/2} \|p(s)\|_{H^1_{-r}}. \label{e: theta def}
\end{align}
We prove Theorem \ref{t: stability} by obtaining global control of $\Theta$. In the proof, we will need to use the estimate 
\begin{align}
\|e^{\mcl t} p_0\|_{H^1_r} \leq C \|p_0\|_{H^1_r} \label{e: H1 boundedness}
\end{align}
for $0 < t < 1$, which holds for any fixed $r \in \R$ and follows from classical semigroup theory \cite[Section 1.4]{Henry}. 

\begin{prop}
	There exist constants $C_1, C_2 > 0$ such that the function $\Theta(t)$ from \eqref{e: theta def} satisfies
	\begin{align}
	\Theta(t) \leq C_1 \|p_0\|_{H^1_r} + C_2 K(\rho_\infty \Theta(t)) \Theta (t)^2 \label{e: theta control}
	\end{align}
	for all $t \in [0, T^*)$, where $\rho_\infty = \|\rho_r \omega^{-1}\|_{L^\infty}$. 
\end{prop}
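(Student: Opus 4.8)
The plan is to bound $\Theta(t)$ by estimating both terms in the variation of constants formula, splitting the time integral at $s = t - 1$ (or handling $t \leq 1$ separately) so that the linear decay estimate from Proposition \ref{p: linear time decay} can be applied on the bulk of the integral where $t - s \geq 1$, while the short-time smoothing estimate \eqref{e: H1 boundedness} handles the region $t - s < 1$.

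First I would dispose of short times: for $t \in [0,1]$, using \eqref{e: H1 boundedness} to control $\|e^{\mcl t}p_0\|_{H^1_{-r}} \leq \|e^{\mcl t} p_0\|_{H^1_r} \leq C\|p_0\|_{H^1_r}$, and bounding the Duhamel integral using \eqref{e: H1 boundedness} together with the nonlinear estimate \eqref{e: nonlin quadratic} (with $s = -r$, using that $\|\omega N\|_{H^1_{-r}} \leq \|\omega N\|_{H^1_r}$ does not quite work, so instead I keep $\|\omega N\|_{H^1_s}$ for any $s$ I like and note $H^1_r \hookrightarrow H^1_{-r}$ on the output side), yielding $(1+t)^{3/2}\|p(t)\|_{H^1_{-r}} \leq C\|p_0\|_{H^1_r} + CK(\rho_\infty\Theta(t))\Theta(t)^2$ on $[0,1]$ since $1+t$ is bounded there. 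Here the key point is that $\|\omega^{-1}p\|_{L^\infty} \leq \|\rho_r\omega^{-1}\|_{L^\infty}\|\rho_{-r}^{-1}\cdot\|$ — more carefully, $\|\omega^{-1}p\|_{L^\infty} \leq C\|\rho_r\omega^{-1}\|_{L^\infty}\|\rho_{-r}p\|_{H^1} = \rho_\infty C\|p\|_{H^1_{-r}} \leq \rho_\infty C(1+s)^{-3/2}\Theta(t)$, so on $[0,T^*)$ we always have $\|\omega^{-1}p(s)\|_{L^\infty} \leq C\rho_\infty\Theta(t)$, which is what lets us invoke \eqref{e: nonlin quadratic} with $R = \rho_\infty\Theta(t)$ (absorbing the harmless constant $C$ into $K$ or $\rho_\infty$).

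For $t \geq 1$, I would estimate $\|e^{\mcl t}p_0\|_{H^1_{-r}} \leq C t^{-3/2}\|p_0\|_{H^1_r} \leq C(1+t)^{-3/2}\|p_0\|_{H^1_r}$ by Proposition \ref{p: linear time decay}, so the linear term contributes $C_1\|p_0\|_{H^1_r}$ to $\Theta$. For the Duhamel term, split $\int_0^t = \int_0^{t-1} + \int_{t-1}^t$. On $[0, t-1]$, apply Proposition \ref{p: linear time decay}: $\|e^{\mcl(t-s)}\omega N(q_*,\omega^{-1}p(s))\|_{H^1_{-r}} \leq C(t-s)^{-3/2}\|\omega N\|_{L^2_r} \leq C(t-s)^{-3/2}K(\rho_\infty\Theta(t))(1+s)^{-3}\Theta(t)^2$, using \eqref{e: nonlin quadratic} (with $L^2_r$ in place of $H^1_r$ on the left — the proposition maps $L^2_r \to H^{2m-1}_{-r} \hookrightarrow H^1_{-r}$ since $2m-1 \geq 1$). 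On $[t-1,t]$, use \eqref{e: H1 boundedness}: $\|e^{\mcl(t-s)}\omega N\|_{H^1_{-r}} \leq \|e^{\mcl(t-s)}\omega N\|_{H^1_r} \leq C\|\omega N\|_{H^1_r} \leq CK(\rho_\infty\Theta(t))(1+s)^{-3}\Theta(t)^2$. Multiplying by $(1+t)^{3/2}$, the two pieces give respectively $C(1+t)^{3/2}\int_0^{t-1}(t-s)^{-3/2}(1+s)^{-3}ds$ and $C(1+t)^{3/2}\int_{t-1}^t(1+s)^{-3}ds$; both of these convolution-type integrals are bounded uniformly in $t \geq 1$ (the first because $3/2 > 1$ makes $(t-s)^{-3/2}$ integrable near $s=t$ after the split, and $(1+s)^{-3}$ decays fast; the second because $(1+s)^{-3} \leq (t/2)^{-3}$ on that interval when $t \geq 2$, and is trivially bounded for $1 \leq t \leq 2$). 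This yields the $C_2 K(\rho_\infty\Theta(t))\Theta(t)^2$ term.

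The main obstacle is purely bookkeeping: making sure the algebraic weights on input and output sides of each semigroup estimate are compatible (the nonlinearity's extra $\omega^{-1}$ factor, plus the embedding $H^1_r \hookrightarrow H^1_{-r}$, are what make everything close), and verifying that the relevant convolution integrals $(1+t)^{3/2}\int_0^{t-1}(t-s)^{-3/2}(1+s)^{-3}\,ds$ are bounded uniformly in $t$ — this is a standard fact (a decay rate of $t^{-3/2}$ convolved against a faster-decaying integrable nonlinearity reproduces $t^{-3/2}$), so no genuine difficulty arises beyond careful accounting. I expect no conceptual hurdle; the inequality \eqref{e: theta control} then follows by combining the $t \leq 1$ and $t \geq 1$ estimates and taking the supremum over $s \in [0,t]$ on the left.
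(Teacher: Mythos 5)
Your proposal is correct and follows essentially the same route as the paper: short times via the smoothing estimate \eqref{e: H1 boundedness}, long times via Proposition \ref{p: linear time decay} combined with \eqref{e: H1 boundedness} near $s=t$ (the paper packages the two cases into the single bound $C(1+t-s)^{-3/2}\|\omega N\|_{H^1_r}$ rather than splitting at $s=t-1$, but this is cosmetic), the $L^\infty$ control $\|\omega^{-1}p\|_{L^\infty}\leq C\rho_\infty\|p\|_{H^1_{-r}}$ to feed the argument of $K$, and the standard convolution bound $\int_0^t(1+t-s)^{-3/2}(1+s)^{-3}\,ds\leq C(1+t)^{-3/2}$. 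No gaps.
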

\begin{proof}
	First assume $0 < t < 1$. Then by \eqref{e: H1 boundedness}, we have 
	\begin{align*}
	(1+t)^{3/2} \|e^{\mcl t} p_0\|_{H^1_{-r}} \leq C \|p_0\|_{H^1_{-r}} \leq C \|p_0\|_{H^1_r}.  
	\end{align*}
	For the nonlinearity, we have, again using \eqref{e: H1 boundedness} and also \eqref{e: nonlin quadratic}
	\begin{align*}
	\left| \left| \int_0^t e^{\mcl (t-s)} \omega N(q_*, \omega^{-1} p(s)) \, ds \right| \right|_{H^1_{-r}} &\leq C \int_0^t \| \omega N(q_*, \omega^{-1} p(s)) \|_{H^1_r} \, ds \\
	&\leq C \int_0^t K(\|\omega^{-1} p(s)\|_{L^\infty}) \|p(s)\|_{H^1_{-r}}^2 ds \\
	&\leq C t \sup_{0 \leq s \leq t} K(\|\omega^{-1} p(s)\|_{L^\infty}) \|p(s)\|_{H^1_{-r}}^2 \\
	&\leq C \Theta(t)^2 \sup_{0 \leq s \leq t} K(\|\omega^{-1} p(s)\|_{L^\infty}).
	\end{align*}
	Using the embedding of $H^1(\R)$ into $L^\infty(\R)$, we have 
	\begin{align*}
	C \Theta(t)^2 \sup_{0 \leq s \leq t} K\left(\|\omega^{-1} p(s)\|_{L^\infty} \right) &\leq C \Theta(t)^2 K \left( \rho_\infty \sup_{0 \leq s \leq t} \|\rho_{-r} p(s)\|_{L^\infty} \right) \\ 
	&\leq C \Theta(t)^2 K\left( \rho_\infty \sup_{0 \leq s \leq t} \|p(s)\|_{H^1_{-r}} \right) \\
	&\leq C \Theta(t)^2 K \left(\rho_\infty \Theta(t) \right). 
	\end{align*}
	Altogether, using the fact that $t \mapsto \Theta(t)$ is non-decreasing, we obtain \eqref{e: theta control} for $0 < t < 1$. 
	
	Now we let $t > 1$. For the linear evolution, we have by Proposition \ref{p: linear time decay}
	\begin{align}
	(1+t)^{3/2} \|e^{\mcl t} p_0\|_{H^1_{-r}} \leq C \frac{(1+t)^{3/2}}{t^{3/2}} \|p_0\|_{H^1_r} \leq C \|p_0\|_{H^1_r}. \label{e: t large linear estimate}
	\end{align}
	For the nonlinearity, again using Proposition \ref{p: linear time decay}, we have  
	\begin{align*}
	\|e^{\mcl(t-s)} \omega N(q_*, \omega^{-1} p)\|_{H^1_{-r}} \leq \frac{C}{(t-s)^{3/2}} \|\omega N(q_*, \omega^{-1} p)\|_{H^1_r} \, ds.
	\end{align*}
	But by \eqref{e: H1 boundedness}, we also have 
	\begin{align*}
	\|e^{\mcl (t-s)} \omega N (q_*, \omega^{-1} p)\|_{H^1_{-r}} \leq C \|\omega N(q_*, \omega^{-1} p)\|_{H^1_r} \,  
	\end{align*}
	for $(t-s) < 1$. It follows that, also using the quadratic estimate on the nonlinearity as above,
	\begin{align*}
	\left|\left| \int_0^t e^{\mcl (t-s)} \omega N(q_*, \omega^{-1} p) \, ds \right| \right|_{H^1_{-r}} &\leq C \int_0^t \frac{1}{(1+t-s)^{3/2}} \| \omega N(q_*, \omega^{-1} p)\|_{H^1_r} \, ds \\
	&\leq C K(\rho_\infty \Theta (t)) \Theta(t)^2 \int_0^t \frac{1}{(1+t-s)^{3/2}} \frac{1}{(1+s)^3} \, ds. 
	\end{align*}
	By splitting the integral into integrals from $0$ to $t/2$ and $t/2$ to $t$ and estimating each piece separately, it can be readily shown that 
	\begin{align*}
	\int_0^t \frac{1}{(1+t-s)^{3/2}} \frac{1}{(1+s)^3} \, ds \leq \frac{C}{(1+t)^{3/2}}. 
	\end{align*}
	Hence we obtain 
	\begin{align}
	(1+t)^{3/2} \left|\left| \int_0^t e^{\mcl (t-s)} \omega N(q_*, \omega^{-1} p) \, ds \right| \right|_{H^1_{-r}} \leq C K(\rho_\infty \Theta (t)) \Theta(t)^2 \label{e: t large nonlinear estimate}
	\end{align}
	for $t > 1$. Together with \eqref{e: t large linear estimate}, this shows that \eqref{e: theta control} holds for $t > 1$, completing the proof of the proposition.  
\end{proof}

\begin{proof}[Proof of Theorem \ref{t: stability}]
	Let $\|p_0\|_{H^1_r}$ be sufficiently small so that 
	\begin{equation}
	2 C_1 \|p_0\|_{H^1_r } < 1 \text{ and } 4 C_1 C_2 K(\rho_\infty) \|p_0\|_{H^1_r} < 1. \label{e: thm 1 proof smallness}
	\end{equation}
	We claim that $\Theta(t) \leq 2 C_1 \|p_0\|_{H^1_r (\R)} < 1$ for all $t \in [0, T_*)$. Since $\Theta(0) = \|p_0\|_{H^1_{-r} (\R)} \leq \|p_0\|_{H^1_r (\R)} < 2 C_1 \|p_0\|_{H^1_r (\R)}$ (choosing $C_1 > 1/2$ if necessary), continuity of $\Theta$ guarantees that $\Theta(t) < 2 C_1 \|p_0\|_{H^1_r (\R)}$ for sufficiently small $t$. Now suppose there is some time $T$ at which $\Theta (T) = 2 C_1 \|p_0\|_{H^1_r (\R)}$. Then, by \eqref{e: theta control} and the fact that $K$ is non-decreasing, we have 
	\begin{equation*}
	1 \leq 4 C_1 C_2b K(\rho_\infty) \|p_0\|_{H^1_r}, 
	\end{equation*}
	contradicting \eqref{e: thm 1 proof smallness}. Hence $\Theta(t) \leq 2 C_1 \|p_0\|_{H^1_r (\R)} < 1$ for all $t \in [0, T_*)$. In particular, we have uniform control over $\|p(t)\|_{H^1_{-r}}$, which implies that we have global existence in $H^1_{-r} (\R)$, and
	\begin{equation*}
	\|p(t)\|_{H^1_{-r}} \leq \frac{C}{(1+t)^{3/2}} \|p_0\|_{H^1_r} 
	\end{equation*}
	for all $t > 0$. This completes the proof of Theorem \ref{t: stability}, recalling that $v = \omega^{-1} p$. 
\end{proof}

\section{Asymptotics of solution profile - proof of Theorem 2}\label{s: nonlinear asymptotics}
In this section we prove Theorem \ref{t: asymptotics}, establishing an asymptotic description of the perturbation. As in the proof of Theorem \ref{t: stability}, the main difficulty has already been overcome by obtaining a detailed description of the asymptotics of the linear semiflow in Proposition \ref{p: linear time asymptotics}. We handle the nonlinearity via a direct argument, which is essentially the same as that used in \cite{GallayScheel} in the context of diffusive stability of time-periodic solutions to reaction-diffusion systems. 

We begin by decomposing the linear semigroup as 
\begin{align*}
e^{\mcl t} = \Phi^0 (t) + \Phi^\mathrm{ss}(t), 
\end{align*}
where 
\begin{align*}
\Phi^0 (t) = \frac{1}{2 \sqrt{\pi}} \frac{R_1}{t^{3/2}},
\end{align*}
and $\Phi^\mathrm{ss} (t)$ is the remainder term from Proposition \ref{p: linear time asymptotics}, which satisfies in particular 
\begin{align}
\|\Phi^\mathrm{ss} (t)\|_{H^1_r \to H^1_{-r}} \leq \frac{C}{(1+t)^2} \label{e: strong decay estimate}
\end{align}
for $t > 1$ and $r > 5/2$. We use this decomposition to rewrite the variation of constants formula as 
\begin{multline}
p(t) = \Phi^0 (t) p_0 + \Phi^\mathrm{ss} (t) p_0  + \int_0^t \Phi^0 (t-s) \omega N (q_*, \omega^{-1} p) \, ds \\ + \int_0^t \Phi^\mathrm{ss} (t-s) \omega N(q_*, \omega^{-1} p) \, ds. \label{e: voc decomp}
\end{multline}
Arguing as in the proof of Theorem \ref{t: stability}, we readily see that the parts of the solution associated with the flow under $\Phi^\mathrm{ss} (t)$ decay faster than $t^{-3/2}$, as stated in the following lemma. For the remainder of this section, we let $r > 5/2$ and assume the hypotheses of Theorem \ref{t: asymptotics} hold. 

\begin{lemma}\label{l: phi ss estimates}
	For $t > 1$, we have 
	\begin{align}
	\|\Phi^\mathrm{ss} (t) p_0\|_{H^1_{-r}} \leq \frac{C}{(1+t)^2} \|p_0\|_{H^1_{r}},
	\end{align}
	and
	\begin{align}
	\left| \left| \int_0^t \Phi^\mathrm{ss} (t-s) \omega N(q_*, \omega^{-1} p(s)) \, ds \right| \right|_{H^1_{-r}} \leq \frac{C}{(1+t)^2} \|p_0\|_{H^1_r}^2. 
	\end{align}
\end{lemma}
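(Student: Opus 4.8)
The plan is to prove both estimates of Lemma \ref{l: phi ss estimates} by the same two-regime splitting used throughout Section \ref{s: nonlinear stability}: use the classical short-time semigroup bound \eqref{e: H1 boundedness} for elapsed times less than $1$, and the strong decay estimate \eqref{e: strong decay estimate} for elapsed times greater than $1$. The first bound is immediate: for $t>1$ we apply \eqref{e: strong decay estimate} directly to $p_0 \in H^1_r(\R)$, which is exactly the claimed inequality $\|\Phi^\mathrm{ss}(t)p_0\|_{H^1_{-r}} \le C(1+t)^{-2}\|p_0\|_{H^1_r}$ (noting $r>5/2$ so that Proposition \ref{p: linear time asymptotics} applies).

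\textbf{The Duhamel term.} For the integral term, I would first recall from the proof of Theorem \ref{t: stability} that under the smallness assumption on $p_0$ we have the a priori control $\|p(s)\|_{H^1_{-r}} \le C(1+s)^{-3/2}\|p_0\|_{H^1_r}$ for all $s \ge 0$, and hence, by the quadratic nonlinear estimate \eqref{e: nonlin quadratic}–\eqref{e: nonlin quadratic} (with the extra $\omega^{-1}$ absorbing the algebraic weight), that $\|\omega N(q_*, \omega^{-1}p(s))\|_{H^1_r} \le C(1+s)^{-3}\|p_0\|_{H^1_r}^2$. Then I would split $\int_0^t = \int_0^{t-1} + \int_{t-1}^t$. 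On $[0,t-1]$ the elapsed time $t-s \ge 1$, so \eqref{e: strong decay estimate} gives an integrand bounded by $C(1+t-s)^{-2}(1+s)^{-3}\|p_0\|_{H^1_r}^2$; on $[t-1,t]$ the elapsed time is at most $1$, so \eqref{e: H1 boundedness} gives an integrand bounded by $C(1+s)^{-3}\|p_0\|_{H^1_r}^2 \le C(1+t)^{-3}\|p_0\|_{H^1_r}^2$ since $s \ge t-1$. The second piece contributes at most $C(1+t)^{-3}\|p_0\|_{H^1_r}^2$, which is better than needed. For the first piece, I would invoke the standard convolution inequality
\begin{align*}
\int_0^{t} \frac{1}{(1+t-s)^2}\frac{1}{(1+s)^3}\, ds \le \frac{C}{(1+t)^2},
\end{align*}
proved exactly as the analogous estimate in the proof of Theorem \ref{t: stability}, by splitting at $s = t/2$: on $[0,t/2]$ one bounds $(1+t-s)^{-2} \le C(1+t)^{-2}$ and integrates $(1+s)^{-3}$ to a constant, while on $[t/2,t]$ one bounds $(1+s)^{-3} \le C(1+t)^{-3}$ and integrates $(1+t-s)^{-2}$ to a constant, yielding the $(1+t)^{-2}$ (indeed $(1+t)^{-3}$ on the second sub-interval) rate. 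Combining gives the claimed bound $C(1+t)^{-2}\|p_0\|_{H^1_r}^2$.

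\textbf{Expected obstacle.} There is no serious obstacle here; the lemma is an essentially routine bookkeeping exercise layered on top of the already-established linear asymptotics. The only point requiring minor care is consistency of the localization exponents — one must verify that the same $r>5/2$ works simultaneously in \eqref{e: strong decay estimate}, in the $H^1_r \to H^1_{-r}$ application of \eqref{e: H1 boundedness}, and in the quadratic estimate for $\omega N$, so that all three ingredients are available for the same weight; since \eqref{e: nonlin quadratic} and \eqref{e: H1 boundedness} hold for every fixed $r \in \R$, this is automatic. The a priori decay $\|p(s)\|_{H^1_{-r}} \le C(1+s)^{-3/2}\|p_0\|_{H^1_r}$ used above is precisely the conclusion of Theorem \ref{t: stability}, so the argument is self-contained given the earlier sections.
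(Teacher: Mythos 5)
Your overall strategy is exactly the one the paper intends (the paper only says ``arguing as in the proof of Theorem \ref{t: stability}'' and gives no details): the first bound is immediate from \eqref{e: strong decay estimate}, and the Duhamel term is handled by the a priori decay of Theorem \ref{t: stability}, the quadratic estimate \eqref{e: nonlin quadratic}, and the standard convolution inequality. All of that part of your argument is correct.

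There is, however, one step that fails as written: the near-diagonal interval $s \in [t-1,t]$. The bound \eqref{e: H1 boundedness} controls $e^{\mcl (t-s)}$, not $\Phi^{\mathrm{ss}}(t-s)$. By definition $\Phi^{\mathrm{ss}}(\tau) = e^{\mcl \tau} - \Phi^0(\tau)$ with $\Phi^0(\tau) = \tfrac{1}{2\sqrt{\pi}}R_1 \tau^{-3/2}$, and since $R_1 \neq 0$ the operator $\Phi^{\mathrm{ss}}(\tau)$ blows up like $\tau^{-3/2}$ as $\tau \to 0^+$; consequently
\begin{align*}
\int_{t-1}^{t} \|\Phi^{\mathrm{ss}}(t-s)\|_{H^1_r \to H^1_{-r}}\, \|\omega N(q_*,\omega^{-1}p(s))\|_{H^1_r}\, ds
\end{align*}
is not even finite if you estimate $\Phi^{\mathrm{ss}}(t-s)$ by a constant, because $\int_{t-1}^t (t-s)^{-3/2}\,ds$ diverges. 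So the claim ``\eqref{e: H1 boundedness} gives an integrand bounded by $C(1+s)^{-3}\|p_0\|_{H^1_r}^2$'' is false for $\Phi^{\mathrm{ss}}$. To be fair, the same singularity afflicts the paper's own decomposition \eqref{e: voc decomp} and the term $\mathcal{I}_1$ in Lemma \ref{l: phi0 decomp}, so this is an imprecision you have inherited; but your write-up makes the erroneous step explicit. The standard repair is to insert a temporal cutoff into the leading term, e.g.\ set $\Phi^0(t) = \chi(t)\tfrac{1}{2\sqrt{\pi}}R_1 t^{-3/2}$ with $\chi(t)=0$ for $t\le 1$ and $\chi(t)=1$ for $t\ge 2$. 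This leaves Proposition \ref{p: linear time asymptotics} and \eqref{e: strong decay estimate} unchanged for large $t$, makes every integral in \eqref{e: voc decomp} absolutely convergent, and gives $\Phi^{\mathrm{ss}}(\tau) = e^{\mcl \tau}$ for $\tau \le 1$, so that on $[t-1,t]$ your appeal to \eqref{e: H1 boundedness} becomes legitimate and the rest of your argument goes through verbatim.
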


We now decompose the term in the nonlinearity involving $\Phi^0$ in order to identify which parts of it contribute to the leading order asymptotics and which are faster decaying. We write 
\begin{align}
\int_0^t \Phi^0 (t-s) \omega N(q_*, \omega^{-1} p(s)) \, ds = \mathcal{I}_1 (t) + \mathcal{I}_2 (t) + \mathcal{I}_3 (t) + \mathcal{I}_4 (t), \label{e: phi0 int decomposition}
\end{align}
where 
\begin{align*}
\mathcal{I}_1 (t) &= \int_{t/2}^t \Phi^0 (t-s) \omega N(q_*, \omega^{-1} p(s) ) \, ds, \\
\mathcal{I}_2 (t) &= \int_0^{t/2} (\Phi^0(t-s) - \Phi^0(t)) \omega N(q_*, \omega^{-1} p(s)) \, ds, \\
\mathcal{I}_3 (t) &= \Phi^0 (t) \int_0^\infty \omega N(q_*, \omega^{-1} p(s)) \, ds,
\end{align*}
and 
\begin{align*}
\mathcal{I}_4 (t) = - \Phi^0 (t) \int_{t/2}^\infty  \omega N(q_*, \omega^{-1} p (s)) \, ds. 
\end{align*}
\begin{lemma}\label{l: phi0 decomp}
	The terms in the decomposition \eqref{e: phi0 int decomposition} satisfy for $t > 1$
	\begin{align}
	\|\mathcal{I}_1 (t)\|_{H^1_{-r}} &\leq \frac{C}{(1+t)^3} \|p_0\|_{H^1_r}^2, \label{e: I1 estimate} \\
	\|\mathcal{I}_2 (t)\|_{H^1_{-r}} &\leq \frac{C}{(1+t)^{5/2}} \|p_0\|_{H^1_r}^2, \label{e: I2 estimate}
	\end{align}
	and
	\begin{align}
	\|\mathcal{I}_4 (t)\|_{H^1_{-r}} &\leq \frac{C}{(1+t)^{7/2}} \|p_0\|_{H^1_r}^2. \label{e: I4 estimate}
	\end{align} 
\end{lemma}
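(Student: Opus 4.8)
The three estimates all follow by feeding the a priori decay of Theorem~\ref{t: stability} into elementary convolution bounds, so the plan is mostly bookkeeping. First I would record the two inputs I will use throughout. Since $r>5/2>3/2$, Theorem~\ref{t: stability} applies and gives $\|p(s)\|_{H^1_{-r}}\le C(1+s)^{-3/2}\|p_0\|_{H^1_r}$ for all $s\ge 0$; in particular $\|\omega^{-1}p(s)\|_{L^\infty}\le\rho_\infty\|p(s)\|_{H^1_{-r}}$ stays below a fixed constant when $\|p_0\|_{H^1_r}$ is small, so the quadratic estimate \eqref{e: nonlin quadratic} (applied with target space $H^1_r$ and source space $H^1_{-r}$) yields a constant $K_0$ with
\begin{align*}
\|\omega N(q_*,\omega^{-1}p(s))\|_{H^1_r}\;\le\;K_0\,\|p(s)\|_{H^1_{-r}}^2\;\le\;\frac{C}{(1+s)^3}\,\|p_0\|_{H^1_r}^2 .
\end{align*}
Second, from $\Phi^0(\tau)=\tfrac1{2\sqrt\pi}\tau^{-3/2}R_1$ together with $R_1\in\mathcal B(L^2_r,H^{2m-1}_{-r})\subset\mathcal B(H^1_r,H^1_{-r})$, and recalling that $\Phi^0$ is understood to be suitably truncated for small times (so that $e^{\mcl t}=\Phi^0(t)+\Phi^\mathrm{ss}(t)$ holds for all $t>0$), one has $\|\Phi^0(\tau)\|_{H^1_r\to H^1_{-r}}\le C(1+\tau)^{-3/2}$ for $\tau>0$; in particular $\tau\mapsto\|\Phi^0(\tau)\|_{H^1_r\to H^1_{-r}}$ is integrable on $(0,\infty)$.

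With these in hand, \eqref{e: I4 estimate} is immediate: $\|\Phi^0(t)\|_{H^1_r\to H^1_{-r}}\le Ct^{-3/2}$, while integrating the nonlinearity bound gives $\big\|\int_{t/2}^\infty\omega N(q_*,\omega^{-1}p(s))\,ds\big\|_{H^1_r}\le C\|p_0\|_{H^1_r}^2\int_{t/2}^\infty(1+s)^{-3}\,ds\le C(1+t)^{-2}\|p_0\|_{H^1_r}^2$, so the product is $\mathrm{O}(t^{-7/2})$. For $\mathcal I_1$ I would take norms inside the integral, bound $(1+s)^{-3}\le C(1+t)^{-3}$ for $s\in[t/2,t]$, and substitute $\tau=t-s$:
\begin{align*}
\|\mathcal I_1(t)\|_{H^1_{-r}}\le C\|p_0\|_{H^1_r}^2\int_{t/2}^t\|\Phi^0(t-s)\|_{H^1_r\to H^1_{-r}}(1+s)^{-3}\,ds\le\frac{C\|p_0\|_{H^1_r}^2}{(1+t)^3}\int_0^{\infty}(1+\tau)^{-3/2}\,d\tau ,
\end{align*}
and the last integral is finite, which is \eqref{e: I1 estimate}. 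The term $\mathcal I_2$ is the only one using a cancellation: writing $\Phi^0(t-s)-\Phi^0(t)=\tfrac1{2\sqrt\pi}\big((t-s)^{-3/2}-t^{-3/2}\big)R_1$ and applying the mean value theorem to $\tau\mapsto\tau^{-3/2}$ on $[t-s,t]\subset[t/2,t]$ gives $|(t-s)^{-3/2}-t^{-3/2}|\le C\,s\,t^{-5/2}$ whenever $s\le t/2$, hence
\begin{align*}
\|\mathcal I_2(t)\|_{H^1_{-r}}\le C\,t^{-5/2}\,\|p_0\|_{H^1_r}^2\int_0^{t/2}\frac{s}{(1+s)^3}\,ds\le C\,t^{-5/2}\,\|p_0\|_{H^1_r}^2 ,
\end{align*}
since $s(1+s)^{-3}$ is integrable on $(0,\infty)$; this gives \eqref{e: I2 estimate}.

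I expect the only genuine subtlety to be in $\mathcal I_2$, where one must exploit the difference $\Phi^0(t-s)-\Phi^0(t)$ rather than the two terms separately: estimating them individually would only give $\mathrm{O}(t^{-3/2})$ decay — the same rate as the leading-order term $\mathcal I_3$ that this decomposition is designed to isolate — which is not enough. Everything else reduces to checking convergence of a handful of time integrals and tracking which endpoint ($s\approx t$ for $\mathcal I_1$, $s\approx 0$ for $\mathcal I_2$, the tail $s\gtrsim t$ for $\mathcal I_4$) supplies the extra decay; the bounded-time range not covered by the estimates above is handled trivially, since there all the relevant quantities are controlled by a constant multiple of $\|p_0\|_{H^1_r}^2$ via \eqref{e: H1 boundedness} and the nonlinearity bound.
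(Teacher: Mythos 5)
Your proposal is correct and follows essentially the same route as the paper: the estimates for $\mathcal{I}_1$ and $\mathcal{I}_4$ reduce to the convolution bounds already used in the proof of Theorem \ref{t: stability}, and the key step for $\mathcal{I}_2$ is exactly the paper's mean-value-theorem bound $|(t-s)^{-3/2}-t^{-3/2}|\leq Cs\,t^{-5/2}$ for $s\leq t/2$, combined with the quadratic nonlinearity estimate \eqref{e: nonlin quadratic} and the a priori decay of $p(s)$. Your explicit remark that $\Phi^0$ must be understood as truncated for small times (so that $\|\Phi^0(\tau)\|$ is integrable near $\tau=0$ in the $\mathcal{I}_1$ integral) is a point the paper leaves implicit, and is a welcome clarification rather than a deviation.
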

\begin{proof}
	The proofs of \eqref{e: I1 estimate} and \eqref{e: I4 estimate} proceed similarly to the proof of Theorem \ref{t: stability}, so we focus on the estimate for $\mathcal{I}_2(t)$. By the mean value theorem, we have 
	\begin{align*}
	|t^{-3/2} - (t-s)^{-3/2}| \leq C s (t-s)^{-5/2}, 
	\end{align*}
	and so it follows, using \eqref{e: nonlin quadratic} and Proposition \ref{p: linear time asymptotics}, that 
	\begin{align*}
	\|\mathcal{I}_2(t)\|_{H^1_{-r}} \leq C \|p_0\|_{H^1_r}^2 \int_0^{t/2} \frac{s}{(t-s)^{5/2}} \frac{1}{(1+s)^3} \, ds \leq \frac{C}{t^{5/2}} \|p_0\|_{H^1_r}^2 \leq \frac{C}{(1+t)^{5/2}} \|p_0\|_{H^1_r}^2
	\end{align*}
	for $t >1$, completing the proof of the lemma. 
\end{proof}

Having identified which terms are irrelevant for the leading order time dynamics, we are now ready to prove Theorem \ref{t: asymptotics}. 
\begin{proof}[Proof of Theorem \ref{t: asymptotics}]
	Using Lemmas \ref{l: phi ss estimates} and \ref{l: phi0 decomp} to separate out the faster decaying terms in the variation of constants formula \eqref{e: voc decomp}, we have 
	\begin{align}
	p(t) = \Phi^0 (t) \left( p_0 + \int_0^\infty \omega N (q_*, \omega^{-1} p(s)) \, ds \right) + \mathrm{O}(t^{-2}), \label{e: thm 2 proof leading order}
	\end{align}
	where the $\mathrm{O}(t^{-2})$ terms are understood as being controlled in $H^1_{-r}$ by $C(1+t)^{-2} \|p_0\|_{H^1_r}$ for $t$ large. By the definition of $\Phi^0$ and Lemma \ref{l: R1}, we have 
	\begin{align*}
	\Phi^0 (t) \left( p_0 + \int_0^\infty \omega N (q_*, \omega^{-1} p(s)) \, ds \right) = \alpha_* t^{-3/2} \psi, 
	\end{align*}
	where $\psi$ is the linearly growing solution to $\mcl \psi = 0$ identified in the proof of Lemma \ref{l: R1}, and $\alpha_*$ is given by 
	\begin{align}
	\alpha_* = \frac{1}{2 \sqrt{\pi}} \int_\R g^1(y) \tilde{p} (y) \, dy,
	\end{align}
	where $g^1(y)$ is the function from the expansion of the Green's function, $G^1(x,y) = \psi(x) g^1 (y)$, and 
	\begin{align}
	\tilde{p} (y) = p_0 (y) + \int_0^\infty \omega (y) N(q_*, \omega^{-1} (y) p(y, s)) \, ds. 
	\end{align}
	The asymptotic decomposition \eqref{e: thm 2 proof leading order} is therefore exactly the statement of Theorem \ref{t: asymptotics}, with this choice of $\alpha_*$. 
\end{proof}

\section{Stability at lower localization -- proofs of Theorems \ref{t: stability resolvent Holder} and \ref{t: stability resolvent blowup}}\label{s: lower localization}

We now use the ideas developed in the proof of Theorem \ref{t: stability} to understand the behavior of $e^{\mcl t}$ when acting on initial data which is less strongly localized. The nonlinearity is still strongly localized, by \eqref{e: nonlin quadratic}, so we only prove the linear estimates needed to prove Theorems \ref{t: stability resolvent Holder} and \ref{t: stability resolvent blowup}, as one may use exactly the same estimates on the nonlinearity as used in the proof of Theorem \ref{t: stability}, due to the extra exponentially decaying factor $\omega^{-1}$. 

\subsection{H\"older continuity of the resolvent -- proof of Theorem \ref{t: stability resolvent Holder}}
When acting on functions in $L^2_r (\R)$ for $\frac{1}{2} < r < \frac{3}{2}$, the resolvent $(\mcl - \gamma^2)^{-1}$ is no longer Lipschitz in $\gamma$, but instead has some H\"older continuity. We exploit this H\"older continuity to obtain sharp time decay rates exactly as in the proof of the $t^{-3/2}$ decay for $r > 3/2$ in Proposition \ref{p: linear time decay}. 
\begin{prop}\label{p: resolvent Holder}
	Let $\frac{1}{2} < r < \frac{3}{2}$, $s < r - 2$, and fix some $\alpha$ with $0 < \alpha < r - \frac{3}{2} + \min \left( 1, -\frac{1}{2} - s \right)$. Then 
	\begin{align}
	(\mcl-\gamma^2)^{-1} = R_0 + \mathrm{O}(|\gamma|^\alpha) \label{e: resolvent holder}
	\end{align}
	in $\mathcal{B} (L^2_r (\R), H^{2m-1}_s (\R))$ for $\gamma$ small with $\gamma^2$ to the right of the essential spectrum of $\mcl$. 
\end{prop}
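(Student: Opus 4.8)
The plan is to follow the two-step structure of Sections \ref{sec: asymptotic resolvents} and \ref{s: full resolvent}, tracking the weaker algebraic localization, which degrades Lipschitz dependence on $\gamma$ to H\"older dependence. First I would establish the analogue of Proposition \ref{p: right resolvent estimate} for the right limiting resolvent in the lower-localization range: for $\frac12 < r < \frac32$, $s < r-2$, and $\alpha$ as in the statement, one has $R^+(\gamma) = R^+_0 + \mathrm{O}(|\gamma|^\alpha)$ in $\mathcal{B}(L^2_{r,r}(\R), H^{2m-1}_{s,s}(\R))$ when acting on odd functions, with $R^+_0$ the same limiting operator as before. I would then transfer this to the full resolvent exactly as in Section \ref{ss: farfield core leading order}: decompose $g = g_- + g_c + g_+$, solve the left piece with $(\mcl_- - \gamma^2)^{-1}$ (analytic in $\gamma^2$ by Lemma \ref{l: left resolvent estimates}, hence no obstruction), solve the right piece with $R^+(\gamma)$ applied to $g_+^{\mathrm{odd}}$, and solve the center equation $(\mcl-\gamma^2)u^c = \tilde g(\gamma)$ by the far-field/core decomposition. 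Since $\tilde g(\gamma)$ in \eqref{e: tilde f def} remains exponentially localized (the construction only moves the non-localized data to the right, where it is absorbed by $R^+(\gamma)$), the invertibility and analyticity statements of Lemmas \ref{l: farfield core rhs control} and \ref{l: farfield core invertibility} carry over unchanged; the H\"older rate enters only through $\tilde g(\gamma) - \tilde g(0) = \mathrm{O}(|\gamma|^\alpha)$ and through the far-field mode $\chi_+ e^{\nu^-(\gamma)\cdot}$, for which $|1 - e^{\nu^-(\gamma)x}| \le C|\gamma||x| \le C|\gamma|^\alpha \langle x\rangle^\alpha$ by interpolating against the uniform bound $|1 - e^{\nu^-(\gamma)x}| \le C$, and $\langle\cdot\rangle^\alpha$ is controlled in $L^2_s$ since our range forces $2\alpha + 2s < -1$.

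The analytic heart of the proof is the interpolation underlying the new right-resolvent estimate. The Green's-function decomposition \eqref{e: right greens fcn decomposition} is still valid, and the remainder pieces $\tilde G^c_\gamma + G^h_\gamma$ are handled as in Lemma \ref{l: greens fcn remainder estimates}: they are $\mathrm{O}(|\gamma|)$ carrying only a single $\langle x\rangle$-weight, which is integrable against the stronger weights available here, so these actually contribute $\mathrm{O}(|\gamma|)$, better than needed. The essential term is the heat kernel on odd inputs, whose action is written through the half-line (``Dirichlet'') kernel $G^{\mathrm{odd}}_\gamma$ of \eqref{e: G odd}; the same device applied to $G^c_\gamma$ produces a half-line kernel $G^{c,\mathrm{odd}}_\gamma$, so that the constant-in-$x$, $\mathrm{O}(1/\gamma)$ singular parts of $G^c_\gamma$ and $G^{\mathrm{heat}}_\gamma$ annihilate odd data and drop out — this is what makes the $\mathrm{O}(1/\gamma)$ size of $G^c_\gamma - G^{\mathrm{heat}}_\gamma$ in $L^1$ harmless. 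For the reduced kernel $K_\gamma(x,y) := G^{\mathrm{odd}}_\gamma(x,y) - 2\nu_0 \min(x,y)$ I would combine the two bounds $|K_\gamma(x,y)| \le C|\gamma|\langle x\rangle\langle y\rangle$ (Lemma \ref{l: dirichlet resolvent}) and $|K_\gamma(x,y)| \le C\min(\langle x\rangle,\langle y\rangle)$ (from $|G^{\mathrm{odd}}_\gamma(x,y)| \le 2\nu_0\min(x,y)$ for $\Re\gamma\ge 0$) into $|K_\gamma(x,y)| \le C|\gamma|^\alpha \langle x\rangle\langle y\rangle^\alpha$ when $|x|\le|y|$ and $|K_\gamma(x,y)| \le C|\gamma|^\alpha \langle y\rangle\langle x\rangle^\alpha$ when $|x|>|y|$, using $\min(a^2 b,1)\le (a^2 b)^\alpha$; the $\partial_x$-derivative is easier since $|\partial_x G^{\mathrm{odd}}_\gamma|\le C$ uniformly, and the higher derivatives gain extra factors of $\gamma$ exactly as noted in the proof of Proposition \ref{p: right resolvent estimate}.

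With these pointwise bounds, Cauchy--Schwarz reduces the operator estimate to finiteness of
\begin{align*}
|\gamma|^{2\alpha}\int\!\!\int_{|x|\le|y|}\langle x\rangle^{2s+2}\langle y\rangle^{2\alpha-2r}\,dx\,dy \;+\; |\gamma|^{2\alpha}\int\!\!\int_{|x|>|y|}\langle x\rangle^{2s+2\alpha}\langle y\rangle^{2-2r}\,dx\,dy.
\end{align*}
Performing the inner integral first in each region and using $s < r-2 < -\frac12$ and $\frac12 < r < \frac32$, the first region converges precisely when $\alpha < r - \frac12$ in the case $s \le -\frac32$ (where the $x$-integral is globally integrable) and when $\alpha < r-2-s$ in the case $-\frac32 < s < -\frac12$ (where it grows like $\langle y\rangle^{2s+3}$); the second region converges precisely when $\alpha < r-2-s$. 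The two cases of the first region combine to the single constraint $\alpha < r - \frac32 + \min(1, -\frac12 - s)$, which is exactly the hypothesis, and the second region is then automatically satisfied. The main obstacle is this bookkeeping: verifying that the half-line reformulation genuinely removes the $\mathrm{O}(1/\gamma)$ obstruction on odd data, choosing the interpolation exponents so that the growth loaded onto the $x$ variable (which carries only the weak weight $\langle x\rangle^{2s}$ with $s$ just below $r-2$) stays below threshold, and checking that the threshold matches the stated $\alpha$-range in both sub-cases. Once the right-resolvent estimate is in place, the remaining assembly follows Section \ref{ss: farfield core leading order} line by line with $\mathrm{O}(|\gamma|)$ replaced by $\mathrm{O}(|\gamma|^\alpha)$.
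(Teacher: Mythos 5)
Your proposal follows the same two-step strategy as the paper: reduce to the asymptotic resolvent $(\mcl_+-\gamma^2)^{-1}$ acting on odd data via the far-field/core machinery of Section \ref{ss: farfield core leading order}, and upgrade the Lipschitz kernel bounds to H\"older bounds by interpolating against uniform-in-$\gamma$ bounds. Your interpolation for $K_\gamma = G^{\mathrm{odd}}_\gamma - 2\nu_0\min(x,y)$ and the ensuing Cauchy--Schwarz bookkeeping are correct, and they do reproduce exactly the stated range $\alpha < r - \tfrac32 + \min(1,-\tfrac12-s)$. The only cosmetic difference is that the paper records a single kernel bound $C|\gamma|^\alpha\langle x\rangle^a\langle y\rangle^{1+\alpha-a}$ with a free exponent $a\in(\alpha,1)$ (chosen so that both marginal integrals converge globally) instead of splitting the $(x,y)$-plane and using the endpoint exponents on each half; and the paper estimates $G^c_\gamma-G^{\mathrm{heat}}_\gamma$ directly --- the $\mathrm{O}(1/\gamma)$ singular prefactors already cancel pointwise in that difference, as in Lemma \ref{l: center to heat resolvent} --- so no odd-ification of $G^c_\gamma$ is needed for that piece.

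There is one concrete misstep: your claim that $\tilde G^c_\gamma + G^h_\gamma$ ``are $\mathrm{O}(|\gamma|)$ carrying only a single $\langle x\rangle$-weight, which is integrable against the stronger weights available here.'' The weights here are \emph{weaker}, not stronger, and the bound $|\tilde G^c_\gamma(x-y)-\tilde G^c_0(x-y)|\le C|\gamma||x-y|$ alone does not give a bounded operator $L^2_{r,r}\to L^2_{s,s}$: on the region $|y|\ge|x|$ the Cauchy--Schwarz integral requires $\int\langle y\rangle^{2-2r}\,dy<\infty$, i.e.\ $r>\tfrac32$, which fails throughout the present range. This term needs the same interpolation you use for $K_\gamma$ --- combine the Lipschitz bound with the uniform bound $|\tilde G^c_\gamma-\tilde G^c_0|\le C$ to get $C|\gamma|^\alpha|x-y|^\alpha$, which is what the paper's lemma records --- and one then checks $\alpha<-s-\tfrac12$ and $\alpha<r-\tfrac12$, both implied by your hypothesis. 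Only the genuinely hyperbolic piece $G^h_\gamma$ is $\mathrm{O}(|\gamma|)$ outright (via analyticity in $L^1$ and Young's inequality). This is a fixable slip rather than a structural gap, since the fix is exactly the device you already deploy for the main term.
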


Using the far-field/core decomposition argument of Section \ref{ss: farfield core leading order}, the proof of this proposition reduces to obtaining the corresponding estimate for the asymptotic resolvent $(\mcl_+ - \gamma^2)^{-1}$, acting on odd functions. This follows from explicit estimates on the resolvent kernel $G_\gamma^+$. As in Section \ref{sec: asymptotic resolvents}, we decompose $G^+_\gamma$ as 
\begin{align*}
G^+_\gamma = G^\mathrm{heat}_\gamma + (G^c_\gamma - G^\mathrm{heat}_\gamma) + \tilde{G}^c_\gamma + G^h_\gamma.
\end{align*}
The worst behaved pieces are $G^\mathrm{heat}_\gamma$ and $G^c_\gamma - G^\mathrm{heat}_\gamma$. We use the fact that we are acting on odd data only to replace convolution with $G^\mathrm{heat}_\gamma$ with integration against $G^\mathrm{odd}_\gamma (x,y)$ defined in \eqref{e: G odd}. Using similar methods as in Section \ref{sec: asymptotic resolvents}, we obtain the following estimates on the parts of the resolvent kernel. We also make use of the fact that for $\beta > 0$, $\langle x \rangle^\beta \langle y \rangle^{-\beta} \leq \langle x - y \rangle^\beta$.
\begin{lemma}
	For $1 > a > \alpha > 0$, the integral kernels $G^\mathrm{odd}_\gamma$, $G^c_\gamma - G^\mathrm{heat}_\gamma$, and $\tilde{G}^c_\gamma$ satisfy the following estimates for $\gamma$ small with $\gamma^2$ to the right of the essential spectrum of $\mcl$,
	\begin{align}
	|G^\mathrm{odd}_\gamma (x,y) - 2 \nu_0 \min (x,y)| &\leq C |\gamma|^\alpha \langle x \rangle^a \langle y \rangle^{1+\alpha-a}, \\
	|G^c_\gamma(x-y) - G^\mathrm{heat}_\gamma (x-y)| &\leq C |\gamma|^\alpha \langle x \rangle^a \langle y \rangle^{1 + \alpha - a}, 
	\end{align}
	and 
	\begin{align}
	|\tilde{G}^c_\gamma (x-y) - \tilde{G}^c_0 (x-y)| \leq C |\gamma|^\alpha |x-y|^\alpha. 
	\end{align}
\end{lemma}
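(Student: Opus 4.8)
The three estimates all have the same shape --- a H\"older-in-$\gamma$ bound of order $|\gamma|^\alpha$ against a spatial weight --- and I would prove all three by the same two-step recipe. First, interpolate via a weighted geometric mean of pointwise bounds between the Lipschitz-in-$\gamma$ estimates already established in Section~\ref{sec: asymptotic resolvents} and a crude bound that is uniform in $\gamma$; then rearrange the resulting powers of $\langle x-y \rangle$ into the stated spatial weights using the elementary inequalities $\langle x \rangle^\beta \langle y \rangle^{-\beta} \leq \langle x-y \rangle^\beta$ (for $\beta > 0$) and $\langle x-y \rangle \leq \sqrt{2}\,\langle x \rangle \langle y \rangle$, together with $\langle \cdot \rangle \geq 1$.

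For $\tilde{G}^c_\gamma$ this is immediate. The proof of Lemma~\ref{l: greens fcn remainder estimates} already gives $|\tilde{G}^c_\gamma(z) - \tilde{G}^c_0(z)| \leq C|\gamma||z|$, coming from $|e^{\nu^\pm(\gamma)z} - 1| \leq C|\gamma||z|$ (valid since $\Re \nu^\pm(\gamma)z \leq 0$ for $\gamma^2$ to the right of the essential spectrum) and analyticity of $\tilde{P}^{\mathrm{cs/cu}}(\gamma)$; the same two ingredients, used only through boundedness, give $|\tilde{G}^c_\gamma(z) - \tilde{G}^c_0(z)| \leq C$. The geometric mean of these with weights $\alpha$ and $1-\alpha$ yields $|\tilde{G}^c_\gamma(z) - \tilde{G}^c_0(z)| \leq C|\gamma|^\alpha |z|^\alpha$, and substituting $z = x-y$ is the claim.

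For $G^\mathrm{odd}_\gamma$ there are three bounds to interpolate. Lemma~\ref{l: dirichlet resolvent} supplies $|G^\mathrm{odd}_\gamma(x,y) - 2\nu_0 \min(x,y)| \leq C|\gamma| \langle x \rangle \langle y \rangle$; on the other hand, from the explicit formula \eqref{e: G odd}, using $|e^{-\nu_0 \gamma |\cdot|}| \leq 1$ and the fact that $G^\mathrm{odd}_\gamma(x,y) - 2\nu_0\min(x,y)$ vanishes when $\min(x,y) = 0$, one obtains (working with $x,y \geq 0$ and extending by oddness) the two further bounds $|G^\mathrm{odd}_\gamma(x,y) - 2\nu_0\min(x,y)| \leq C\langle x \rangle$ and $\leq C\langle y \rangle$. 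Taking the weighted geometric mean of the three with exponents $\alpha$, $a-\alpha$, $1-a$ --- all positive because $1 > a > \alpha > 0$, and summing to $1$ --- produces exactly $C|\gamma|^\alpha \langle x \rangle^a \langle y \rangle^{1+\alpha-a}$. The estimate for $G^c_\gamma - G^\mathrm{heat}_\gamma$ is the delicate one: Lemma~\ref{l: center to heat resolvent} gives $|G^c_\gamma(z) - G^\mathrm{heat}_\gamma(z)| \leq C|\gamma|\langle z \rangle$, and the explicit formulas give the uniform bound $|G^c_\gamma(z) - G^\mathrm{heat}_\gamma(z)| \leq C|\gamma|^{-1}$ since each of $e^{\nu^\pm(\gamma)z}$, $e^{\pm\nu_0\gamma z}$ is bounded to the right of the essential spectrum; interpolating with weights $\tfrac{1+\alpha}{2}$ and $\tfrac{1-\alpha}{2}$ gives $|G^c_\gamma(z) - G^\mathrm{heat}_\gamma(z)| \leq C|\gamma|^\alpha\langle z \rangle^{(1+\alpha)/2}$. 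Setting $z = x-y$ and distributing this power of $\langle x-y\rangle$ between $\langle x \rangle$ and $\langle y \rangle$ --- via $\langle x\rangle^\beta\langle y\rangle^{-\beta} \leq \langle x-y\rangle^\beta$ and a split on which of $\langle x \rangle$, $\langle y \rangle$ is larger, padding the leftover exponents up using $\langle \cdot \rangle \geq 1$ --- reaches the stated form for $a$ in the relevant range.

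The main obstacle is precisely this last step. The kernel $G^c_\gamma - G^\mathrm{heat}_\gamma$ is intrinsically a function of $x-y$, yet the target bound is asymmetric in $x$ and $y$, and because Lipschitz continuity in $\gamma$ is only partially lost, the best single-variable H\"older bound surviving with an $|\gamma|^\alpha$ prefactor carries the exponent $(1+\alpha)/2$ rather than $\alpha$. Threading the redistribution of that exponent is exactly the trade-off that, once fed into the Cauchy--Schwarz estimate in the proof of Proposition~\ref{p: resolvent Holder} --- where one needs $\int \langle x\rangle^{2s+2a}\,dx$ and $\int\langle y\rangle^{2(1+\alpha-a)-2r}\,dy$ to converge --- pins down the admissible relation between $\alpha$, $r$, $s$ and the auxiliary exponent $a$. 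The estimates on the regular parts $\tilde{\partial}_x^k$ of the $x$-derivatives up to order $2m-1$ require no new idea, exactly as in Section~\ref{sec: asymptotic resolvents}, since differentiation only improves matters by producing extra factors of $\gamma$.
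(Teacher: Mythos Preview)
Your interpolation-by-geometric-mean approach is precisely the implementation the paper has in mind when it points back to ``similar methods as in Section~\ref{sec: asymptotic resolvents}'' together with the inequality $\langle x\rangle^\beta\langle y\rangle^{-\beta}\leq\langle x-y\rangle^\beta$; the paper offers no further proof. Your arguments for $\tilde G^c_\gamma$ and for $G^\mathrm{odd}_\gamma$ are correct and clean --- the three-way interpolation with exponents $\alpha$, $a-\alpha$, $1-a$ for $G^\mathrm{odd}_\gamma$ is exactly right, and the needed uniform bounds $|G^\mathrm{odd}_\gamma(x,y)-2\nu_0\min(x,y)|\leq C\min(x,y)$ follow from $|e^w-1|\leq|w|$ for $\Re w\leq 0$ as you indicate.

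One technical remark on the $G^c_\gamma-G^\mathrm{heat}_\gamma$ term. Your interpolation correctly produces $C|\gamma|^\alpha\langle x-y\rangle^{(1+\alpha)/2}$, but the redistribution to $\langle x\rangle^a\langle y\rangle^{1+\alpha-a}$ only goes through at the single value $a=(1+\alpha)/2$: testing at $x=0$, $|\gamma|^2 y\sim 1$, with $\gamma$ near the imaginary axis, one finds the kernel is of size $\sim|\gamma|^{-1}$ while the target bound is $|\gamma|^{\alpha-2(1+\alpha-a)}$, forcing $a\leq(1+\alpha)/2$. So the lemma's stated range appears slightly generous for this particular estimate. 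Your hedge ``for $a$ in the relevant range'' is well placed --- the Cauchy--Schwarz step in Proposition~\ref{p: resolvent Holder} only needs \emph{some} admissible $a$, and $a=(1+\alpha)/2$ (which lies in $(\alpha,1)$) does the job.
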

Together with the fact that convolution with $G^h_\gamma$ is analytic in $\gamma^2$ as an operator on $L^2 (\R)$, we obtain
\begin{align*}
(\mcl_+ - \gamma^2)^{-1} = R_0 + \mathrm{O}(|\gamma|^\alpha)
\end{align*}
in $\mathcal{B}(L^2_{r, r} (\R), H^{2m-1}_{s, s} (\R))$ for the values of $r, s, \alpha$ and $\gamma$ specified in Proposition \ref{p: resolvent Holder}. Using this and repeating the far-field/core decomposition argument in Section \ref{ss: farfield core leading order}, we obtain Proposition \ref{p: resolvent Holder}. We use this regularity of the resolvent to prove the following time decay estimate for the semigroup. 

\begin{prop}\label{p: Holder time decay}
	Let $\frac{1}{2} < r < \frac{3}{2}$ and $s < r-2$. For any $\alpha$ with $0 < \alpha < r - \frac{3}{2} + \min \left( 1, -\frac{1}{2} - s \right)$, there is a constant $C > 0$ such that the semigroup $e^{\mcl t}$ satisfies for $t > 0$ 
	\begin{align}
	\|e^{\mcl t}\|_{L^2_r \to H^{2m-1}_{s}} \leq \frac{C}{t^{1+\frac{\alpha}{2}}}. 
	\end{align}
\end{prop}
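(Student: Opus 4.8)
The plan is to repeat the contour-integral construction of $e^{\mcl t}$ from the proof of Proposition~\ref{p: linear time decay} essentially verbatim, feeding in the H\"older expansion $(\mcl - \gamma^2)^{-1} = R_0 + \mathrm{O}(|\gamma|^\alpha)$ of Proposition~\ref{p: resolvent Holder} wherever the Lipschitz expansion of Proposition~\ref{p: full resolvent order 1} was previously used. First I would observe that H\"older continuity of the resolvent at the origin, together with its continuity on the resolvent set, still suffices to extend $(\mcl-\gamma^2)^{-1}$ continuously up to the limiting contour $\Gamma_0$ that is tangent to the essential spectrum and meets the spectrum of $\mcl$ only at $\gamma = 0$; combined with sectoriality of $\mcl$ \cite{Lunardi} for large $\gamma$, this yields the representation $e^{\mcl t} = -\frac{1}{\pi i}\int_{\Gamma_0} e^{\gamma^2 t}(\mcl - \gamma^2)^{-1}\gamma\,d\gamma$, with the tails $\Gamma^\pm_0$ again contributing $\mathrm{O}(e^{-\mu t})$. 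So everything reduces to the near-origin piece $\Gamma^0_0$.

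On $\Gamma^0_0$, parametrized by $\gamma(a) = ia + c_2 a^2$ for $|a| \le a_*$ with $a_*$ small enough that $\Re\gamma(a)^2 \le -a^2/2$, I would split $(\mcl - \gamma^2)^{-1} = R_0 + E(a)$ with $\|E(a)\|_{L^2_r \to H^{2m-1}_s} \le C|a|^\alpha$. As in Proposition~\ref{p: linear time decay}, the $R_0$ contribution is the integral of the total derivative $\frac{1}{2t}\partial_a(e^{\gamma(a)^2 t})R_0$, hence reduces to boundary terms decaying like $e^{-\mu t}$ since $\Re\gamma(\pm a_*)^2 < 0$; this piece is negligible. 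For the error term, using $|\gamma(a)| \le C|a|$, $|\gamma'(a)| \le C$ and $|e^{\gamma(a)^2 t}| \le e^{-a^2 t/2}$,
\[
\left\| \int_{-a_*}^{a_*} e^{\gamma(a)^2 t} E(a)\gamma(a)\gamma'(a)\,da \right\|_{L^2_r \to H^{2m-1}_s} \le C \int_{-a_*}^{a_*} |a|^{1+\alpha} e^{-a^2 t/2}\,da \le \frac{C}{t^{1+\alpha/2}} \int_\R |z|^{1+\alpha} e^{-z^2}\,dz,
\]
the last step being the substitution $z = a\sqrt{t/2}$, and the final Gaussian moment is finite. This gives the bound $C\,t^{-1-\alpha/2}$ for $t$ bounded away from zero.

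For $0 < t < 1$ I would instead invoke parabolic smoothing \cite[Theorem 3.2.2]{Lunardi}, which gives $\|e^{\mcl t}\|_{L^2 \to H^{2m-1}} \le C\,t^{-(2m-1)/(2m)}$; since $s < 0 < r$ we have continuous embeddings $L^2_r(\R)\hookrightarrow L^2(\R)$ and $H^{2m-1}(\R)\hookrightarrow H^{2m-1}_s(\R)$, and since $(2m-1)/(2m) < 1 + \alpha/2$ and $t < 1$, this yields $\|e^{\mcl t}\|_{L^2_r \to H^{2m-1}_s} \le C\,t^{-(2m-1)/(2m)} \le C\,t^{-1-\alpha/2}$ on that range, completing the estimate for all $t > 0$.

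I do not expect a genuine obstacle at the semigroup level: all of the analytic work is already contained in Proposition~\ref{p: resolvent Holder}, and what remains is the Gaussian bookkeeping above. The only points requiring a moment's care are (i) that H\"older rather than Lipschitz continuity of the resolvent still lets us push the contour onto $\Gamma_0$ and makes the limiting integral converge in $\mathcal{B}(L^2_r(\R), H^{2m-1}_s(\R))$, and (ii) that the exponent $\alpha$ delivered by Proposition~\ref{p: resolvent Holder} passes through the change of variables to give precisely the advertised rate $t^{-1-\alpha/2}$, with constants uniform over the admissible range of $\alpha$; neither is serious.
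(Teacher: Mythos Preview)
Your proposal is correct and follows essentially the same approach as the paper: the paper's proof likewise reuses the contour $\Gamma_0$ from Proposition~\ref{p: linear time decay}, inserts the H\"older expansion from Proposition~\ref{p: resolvent Holder}, recognizes the $R_0$ piece as a total derivative with exponentially decaying boundary terms, and obtains $t^{-1-\alpha/2}$ from the remaining Gaussian moment via $z \sim a\sqrt{t}$. Your explicit treatment of $0 < t < 1$ via parabolic smoothing is a harmless elaboration that the paper leaves implicit.
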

\begin{proof}
	We use the same contours as in the proof of Proposition \ref{p: linear time decay}, pictured in Figure \ref{fig: contours}. We follow the proof of this proposition -- again, the relevant part of the contour is the piece $\Gamma^0_0$ which touches the origin. We use Proposition \ref{p: resolvent Holder} to write 
	\begin{align*}
	\frac{1}{\pi i} \int_{\Gamma^0_0} e^{\gamma^2 t} (\mcl - \gamma^2)^{-1} \gamma \, d \gamma = \frac{1}{\pi i} \int_{\Gamma^0_0} e^{\gamma^2 t} (R_0 + \mathrm{O}(|\gamma|^\alpha)) \gamma \, d \gamma. 
	\end{align*}
	As in the proof of Proposition \ref{p: linear time decay}, we see that the integral associated to $R_0$ decays exponentially in time, and the remainder can be estimated by parametrizing the contour with $\gamma(a) = i a + c_2 a^2$ and changing variables to $z \sim a \sqrt{t}$, which readily gives 
	\begin{align*}
	\left| \left| \frac{1}{\pi i} \int_{\Gamma^0_0} e^{\gamma^2 t} (R_0 + \mathrm{O}(|\gamma|^\alpha)) \gamma \, d \gamma \right| \right|_{L^2_r \to H^{2m-1}_s} \leq \frac{C}{t^{1+\frac{\alpha}{2}}},
	\end{align*}
	as desired. 
\end{proof}

Theorem \ref{t: stability resolvent Holder} follows from applying Proposition \ref{p: Holder time decay} in a direct nonlinear stability argument as in Section \ref{s: nonlinear stability}. 

\subsection{Blowup of the resolvent -- proof of Theorem \ref{t: stability resolvent blowup}}
The resolvent $(\mcl - \gamma^2)^{-1}$ acting on $L^2_r (\R)$ for $r < 1/2$ is no longer uniformly bounded for $\gamma$ small with $\gamma^2$ to the right of the essential spectrum. However, by again explicitly analyzing the asymptotic operators and transferring these estimates to the full resolvent with a far-field/core decomposition, we can quantify the blowup of the resolvent and thereby obtain decay rates for the semigroup. The key result is the following blowup estimate. 

\begin{prop}\label{p: resolvent blowup}
	Let $-\frac{3}{2} < r < \frac{1}{2}$ and $s < r -2$. For any $\beta$ with $\frac{1}{2} - r < \beta < - s - \frac{3}{2}$, there is a constant $C > 0$ such that  
	\begin{align}
	\|(\mcl - \gamma^2)^{-1}\|_{L^2_r \to H^{2m-1}_s} \leq \frac{C}{|\gamma|^\beta}
	\end{align}
	for $\gamma$ small with $\mathrm{Re } \, \gamma \geq \frac{1}{2} | \mathrm{Im } \, \gamma|$. 
\end{prop}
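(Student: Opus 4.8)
The plan is to follow the same architecture as in Sections~\ref{sec: asymptotic resolvents} and~\ref{ss: farfield core leading order}: first establish the blowup bound for the asymptotic resolvent $(\mcl_+ - \gamma^2)^{-1}$ acting on odd data, then transfer it to the full resolvent via the far-field/core decomposition. The only genuinely new ingredient compared to Propositions~\ref{p: full resolvent order 1} and~\ref{p: resolvent Holder} is that the kernel $G^{\mathrm{odd}}_\gamma(x,y)$ no longer converges as $\gamma \to 0$ in the relevant weighted operator norm; instead it grows like $|\gamma|^{-\beta}$. So the first step is to extract from \eqref{e: G odd} the sharp pointwise bound
\begin{align*}
|G^{\mathrm{odd}}_\gamma (x,y)| \leq \frac{C}{|\gamma|^\beta} \langle x \rangle^{a} \langle y \rangle^{b}
\end{align*}
for suitable exponents $a,b$ with $a + b$ matching $\beta$, valid for $\mathrm{Re}\,\gamma \geq \tfrac12 |\mathrm{Im}\,\gamma|$ (the sector condition ensures $|e^{-\nu_0 \gamma |x \pm y|}| \leq e^{-c|\gamma||x\pm y|}$, which is what makes the $\tfrac1\gamma$ prefactor the dominant singularity and controls the spatial growth). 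The idea is that $\tfrac1\gamma e^{-\nu_0\gamma|x-y|}$ is bounded by $\tfrac1\gamma$ on $|\gamma|\,|x-y| \lesssim 1$ and by $|x-y|^{\beta}|\gamma|^{\beta-1}\cdot|\gamma|^{1-\beta}$-type interpolation elsewhere, so one trades powers of $|\gamma|^{-1}$ for powers of $\langle x\rangle\langle y\rangle$; the constraint $\beta > \tfrac12 - r$ is exactly what is needed for the resulting weighted $L^2 \to L^2$ Schur/Cauchy--Schwarz bound to converge on the $x$-integral against the weight $\langle x\rangle^{2s}$, and $\beta < -s - \tfrac32$ gives room in the $y$-integral against $\langle y \rangle^{-2r}$. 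The pieces $G^c_\gamma - G^{\mathrm{heat}}_\gamma$, $\tilde G^c_\gamma$, and $G^h_\gamma$ are all uniformly bounded (indeed $\mathrm{O}(|\gamma|\langle x\rangle\langle y\rangle)$ or analytic) by Lemmas~\ref{l: center to heat resolvent}, \ref{l: greens fcn remainder estimates}, and Young's inequality, so they are dominated by the $|\gamma|^{-\beta}$ term since $\beta > 0$.

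Having obtained $\|(\mcl_+ - \gamma^2)^{-1}\|_{L^2_{r,r} \to H^{2m-1}_{s,s}} \leq C|\gamma|^{-\beta}$ for odd data, the second step is the far-field/core transfer. One runs the decomposition $u = u^- + u^c + \chi_+ u^+$ from Section~\ref{ss: farfield core leading order} verbatim: $u^+$ solves \eqref{e: ff core right} and is estimated by the new asymptotic bound, $u^-$ solves \eqref{e: ff core left} and is still analytic in $\gamma^2$ by Lemma~\ref{l: left resolvent estimates}, and $u^c$ is handled by the far-field/core ansatz of Lemma~\ref{l: farfield core invertibility}, whose operators $T(\gamma), A(\gamma)$ remain analytic at $\gamma = 0$. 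The right-hand side $\tilde g(\gamma)$ in \eqref{e: tilde f def} now inherits a $|\gamma|^{-\beta}$ bound in $L^2_{\exp,\eta}$ rather than a Lipschitz bound, via the same commutator estimates as in Lemma~\ref{l: farfield core rhs control}, and composing with the analytic $T(\gamma), A(\gamma)$ preserves the rate; one must also note $\|\chi_+ e^{\nu^-(\gamma)\cdot}\|_{H^{2m-1}_s}$ is bounded for $s < -\tfrac12$, which holds here. This yields Proposition~\ref{p: resolvent blowup}.

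The main obstacle is the first step: getting the exponents $a,b$ in the pointwise kernel bound to be simultaneously compatible with the two weighted-$L^2$ integrability requirements, i.e. verifying that the window $\tfrac12 - r < \beta < -s - \tfrac32$ is exactly the range in which the Cauchy--Schwarz estimate
\begin{align*}
\|G^{\mathrm{odd}}_\gamma \ast (\cdot)\|_{L^2_s} \leq \frac{C}{|\gamma|^\beta} \|\,\cdot\,\|_{L^2_r} \Big( \int\!\!\int \langle x\rangle^{2(s+a)} \langle y\rangle^{2(b-r)}\, dx\, dy \Big)^{1/2}
\end{align*}
closes. This requires carefully splitting the $(x,y)$-plane according to whether $|\gamma|\,|x\pm y| \lessgtr 1$ and distributing the trade-off between $|\gamma|^{-1}$ and spatial growth optimally; it is essentially the computation in the proof of Lemma~\ref{l: dirichlet resolvent} pushed to the regime where the limiting object $\min(x,y)$ is itself not in the target space, so one keeps the full $\gamma$-dependent kernel throughout. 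The sector restriction $\mathrm{Re}\,\gamma \geq \tfrac12|\mathrm{Im}\,\gamma|$ is used precisely to keep $\mathrm{Re}(\nu_0\gamma)$ bounded below by $c|\gamma|$ so that all exponentials decay and no oscillatory cancellation needs to be exploited. Everything else is routine adaptation of the machinery already developed above.
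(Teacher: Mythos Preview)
Your overall architecture matches the paper exactly: pointwise kernel estimates for the asymptotic resolvent on odd data, then far-field/core transfer via Lemma~\ref{l: farfield core invertibility}. The treatment of $G^{\mathrm{odd}}_\gamma$ and the role of the sector condition are correct in spirit.

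There is, however, a real gap in your handling of the ``secondary'' pieces $G^c_\gamma - G^{\mathrm{heat}}_\gamma$ and $\tilde G^c_\gamma$. You claim these give uniformly bounded operators by citing Lemmas~\ref{l: center to heat resolvent} and~\ref{l: greens fcn remainder estimates}, which produce kernel bounds of the form $C|\gamma|\langle x-y\rangle$ (hence $C|\gamma|\langle x\rangle\langle y\rangle$ after Peetre). But in the regime $r < \tfrac12$ this kernel bound does \emph{not} yield a bounded operator from $L^2_{r,r}$ to any $L^2_{s,s}$: the Cauchy--Schwarz step requires $\int \langle y\rangle^{2(1-r)}\,dy < \infty$, i.e.\ $r > \tfrac32$. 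Even the constant part $\tilde G^c_0$ fails, since $\int |g|\,dy$ is not controlled for $g \in L^2_r$ with $r < \tfrac12$. The paper therefore does \emph{not} treat these pieces as subordinate; Lemma~\ref{l: blowup resolvent pointwise estimates} derives blowup estimates $\tfrac{C}{|\gamma|^\beta}\langle x\rangle^\beta\langle y\rangle^{-\beta}$ for them as well, using the sector exponential decay exactly as you do for $G^{\mathrm{odd}}_\gamma$. The key point is that the $\langle y\rangle$-factor must carry a \emph{negative} exponent to make the $y$-integral converge against data in $L^2_r$ with small or negative $r$, and this decay is purchased at the cost of the $|\gamma|^{-\beta}$ blowup. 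The fix is straightforward once identified---apply your interpolation argument to all three pieces---but as written your proposal would not close.

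A minor point: you have the roles of the two constraints reversed. With kernel growth $\langle x\rangle^{\beta+1}\langle y\rangle^{-\beta}$, the condition $\beta > \tfrac12 - r$ is what makes the $y$-integral against $\langle y\rangle^{-2r}$ converge, and $\beta < -s - \tfrac32$ is what makes the $x$-integral against $\langle x\rangle^{2s}$ converge.
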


As in the previous sections, we start by proving the corresponding result for the asymptotic operator $(\mcl_+ - \gamma^2)$. This estimate follows from the explicit estimates on the resolvent kernel that we collect in the following lemma. 

\begin{lemma}\label{l: blowup resolvent pointwise estimates}
	For any $\beta > 0$, the integral kernels $G^\mathrm{odd}$, $G^c_\gamma - G^\mathrm{heat}_\gamma$, and $\tilde{G}^c_\gamma$ satisfy the following estimates for $\gamma$ small with $\mathrm{Re } \, \gamma \geq \frac{1}{2} |\mathrm{Im } \, \gamma|$
	\begin{align*}
	|G^\mathrm{odd}_\gamma (x,y)| &\leq \frac{C}{|\gamma|^\beta} \langle x \rangle^{\beta + 1} \langle y \rangle^{-\beta}, \\
	|G^c_\gamma (x-y) - G^\mathrm{heat}_\gamma (x-y)| &\leq \frac{C}{|\gamma|^\beta} \langle x \rangle^\beta \langle y \rangle^{-\beta},
	\end{align*}
	and 
	\begin{align*}
	|\tilde{G}^c_\gamma (x-y) | \leq \frac{C}{|\gamma|^\beta} \langle x \rangle^\beta \langle y \rangle^{-\beta}. 
	\end{align*}
\end{lemma}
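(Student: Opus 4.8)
The plan is to reduce the lemma, exactly as in Section~\ref{sec: asymptotic resolvents}, to three pointwise estimates on the explicit kernels appearing in the decomposition \eqref{e: right greens fcn decomposition}. The one ingredient that is new relative to Lemmas~\ref{l: center to heat resolvent}--\ref{l: dirichlet resolvent} is the sector restriction $\mathrm{Re}\,\gamma\ge\frac{1}{2}|\mathrm{Im}\,\gamma|$, which gives $\mathrm{Re}\,\gamma\ge c_0|\gamma|$ for some $c_0>0$ and hence, via the expansion $\nu^\pm(\gamma)=\pm\nu_0\gamma+\mathrm{O}(\gamma^2)$ from \eqref{e: spatial evals} with $\nu_0=1/\sqrt{\alpha}$, the estimates $\mathrm{Re}\,\nu^-(\gamma)\le-c_1|\gamma|$ and $\mathrm{Re}\,\nu^+(\gamma)\ge c_1|\gamma|$ for all $\gamma$ small in the sector, for some fixed $c_1>0$. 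Consequently every kernel in \eqref{e: right greens fcn decomposition} other than $G^\mathrm{heat}_\gamma$ itself carries an honest exponential factor $e^{-c_1|\gamma||x-y|}$, and it is exactly this factor that turns pointwise boundedness of those kernels into the decay in $y$ demanded by the weight $\langle y\rangle^{-\beta}$. Throughout we use Peetre's inequality $\langle x\rangle^\beta\langle y\rangle^{-\beta}\le C\langle x-y\rangle^\beta$ and the elementary bound $t^a e^{-c_1 t}\le C_a$ for $t,a\ge0$.

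For $G^\mathrm{odd}_\gamma$ we use the formula \eqref{e: G odd}: writing $e^{-\nu_0\gamma|x-y|}-e^{-\nu_0\gamma|x+y|}=\nu_0\gamma\int_{|x-y|}^{|x+y|}e^{-\nu_0\gamma s}\,ds$, bounding $|e^{-\nu_0\gamma s}|\le e^{-c_1|\gamma||x-y|}$ on the interval of integration, and recalling that this interval has length $|x+y|-|x-y|=2\min(x,y)$ for $x,y>0$, we obtain the two estimates $|G^\mathrm{odd}_\gamma(x,y)|\le C|\gamma|^{-1}e^{-c_1|\gamma||x-y|}$ and $|G^\mathrm{odd}_\gamma(x,y)|\le C\min(x,y)\,e^{-c_1|\gamma||x-y|}$. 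Interpolating these via $\min(A,B)\le A^{1-\theta}B^\theta$ with $\theta=1-\beta$ handles $0<\beta\le1$; for $1<\beta<2$ one keeps the full quantity $\min(|\gamma|^{-1},\min(x,y))$, splits into the regions $x\lessgtr y$ and $\min(x,y)\lessgtr|\gamma|^{-1}$, and uses the exponential factor together with $t^\beta e^{-c_1 t}\le C_\beta$ to absorb the ratio $\langle y\rangle/\langle x\rangle$; this last step is precisely what is needed in the regime $y\gg x$ that the purely algebraic interpolation cannot reach. A bookkeeping of cases then yields $|G^\mathrm{odd}_\gamma(x,y)|\le C|\gamma|^{-\beta}\langle x\rangle^{\beta+1}\langle y\rangle^{-\beta}$, with the extra power $\langle x\rangle$ recording the gain coming from $\min(x,y)$.

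For $G^c_\gamma-G^\mathrm{heat}_\gamma$ we recall from Section~\ref{sec: asymptotic resolvents} that, writing $\xi=x-y$, this kernel equals $-c_{2m}^{-1}\beta\gamma^{-1}(e^{\nu^-(\gamma)\xi}-e^{-\nu_0\gamma\xi})$ for $\xi\ge0$, with the $\nu^+$ analogue for $\xi<0$. Since $|\nu^-(\gamma)+\nu_0\gamma|\le C|\gamma|^2$ by \eqref{e: spatial evals}, the same segment estimate gives $|e^{\nu^-(\gamma)\xi}-e^{-\nu_0\gamma\xi}|\le C|\gamma|^2|\xi|\,e^{-c_1|\gamma||\xi|}$, hence $|G^c_\gamma(\xi)-G^\mathrm{heat}_\gamma(\xi)|\le C|\gamma|\,|\xi|\,e^{-c_1|\gamma||\xi|}$; in particular this kernel is uniformly bounded in $\gamma$. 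Multiplying through by $|\gamma|^\beta\langle x\rangle^{-\beta}\langle y\rangle^\beta$ and applying Peetre's inequality reduces the claimed bound to the uniform boundedness of $(|\gamma||\xi|)^{\beta+1}e^{-c_1|\gamma||\xi|}$ (for $|\xi|\ge1$; the region $|\xi|\le1$ is trivial), which holds for every $\beta>0$. Finally, $\tilde G^c_\gamma(\xi)$ is $e^{\nu^\pm(\gamma)\xi}$ (according to the sign of $\xi$) times the regular part $P_1\tilde{P}^{\mathrm{cs/cu}}(\gamma)Q_1$, which is analytic and hence bounded in $\gamma$ by Lemma~\ref{l: projection pole}, so $|\tilde G^c_\gamma(\xi)|\le C e^{-c_1|\gamma||\xi|}$; the same manipulation now reduces the bound to the uniform boundedness of $(|\gamma|\langle y\rangle/\langle x\rangle)^\beta e^{-c_1|\gamma||\xi|}$, which follows from $|\gamma|\langle y\rangle\le C(\langle x\rangle+|\gamma||\xi|)$.

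No individual step here is hard; the real work, as already in Lemmas~\ref{l: center to heat resolvent}--\ref{l: dirichlet resolvent}, is the bookkeeping needed to keep the $|\gamma|^{-1}$ prefactor of $G^\mathrm{odd}_\gamma$, the exponential rate $c_1|\gamma|$, and the algebraic weights in balance so that the final bound lands on exactly $|\gamma|^{-\beta}$ across the whole range $0<\beta<2$ required by Proposition~\ref{p: resolvent blowup}. The one point demanding genuine care is the smallness threshold for $|\gamma|$: it must be chosen small enough, uniformly over the sector $\mathrm{Re}\,\gamma\ge\frac{1}{2}|\mathrm{Im}\,\gamma|$, that the $\mathrm{O}(\gamma^2)$ corrections to $\nu^\pm(\gamma)$ neither change the sign of $\mathrm{Re}\,\nu^\pm(\gamma)$ nor drop $|\mathrm{Re}\,\nu^\pm(\gamma)|$ below a fixed multiple of $|\gamma|$. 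Once that is arranged, the exponential factor $e^{-c_1|\gamma||x-y|}$ plays exactly the role that the subtraction of the $\gamma\to0$ limit played in the H\"older estimates behind Proposition~\ref{p: resolvent Holder}.
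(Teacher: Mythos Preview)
Your approach matches the paper's: the paper does not give a detailed proof of this lemma, merely pointing to ``similar methods as in Section~\ref{sec: asymptotic resolvents}'', and what you have written is exactly the filling-in of those details. The key observation---that the sector restriction $\mathrm{Re}\,\gamma\ge\tfrac12|\mathrm{Im}\,\gamma|$ upgrades the bounds $\mathrm{Re}\,\nu^\mp(\gamma)\lessgtr 0$ used in Section~\ref{sec: asymptotic resolvents} to the quantitative estimate $|\mathrm{Re}\,\nu^\pm(\gamma)|\ge c_1|\gamma|$, furnishing a genuine exponential factor $e^{-c_1|\gamma||x-y|}$---is exactly right, and your treatments of $G^c_\gamma-G^\mathrm{heat}_\gamma$ and $\tilde G^c_\gamma$ via Peetre's inequality and the bound $(|\gamma||\xi|)^{\beta+1}e^{-c_1|\gamma||\xi|}\le C$ are correct.

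There is, however, a small gap in your handling of $G^\mathrm{odd}_\gamma$. You claim the interpolation $\min(|\gamma|^{-1},\min(x,y))\le|\gamma|^{-\beta}\min(x,y)^{1-\beta}$ alone settles $0<\beta\le1$, but it does not: in the regime $y\gg x$ with, say, $x\sim1$ and $y\sim|\gamma|^{-1}$, the interpolated bound gives $|G^\mathrm{odd}_\gamma|\lesssim|\gamma|^{-\beta}$ while the target $|\gamma|^{-\beta}\langle x\rangle^{\beta+1}\langle y\rangle^{-\beta}$ is of order~$1$. The interpolation has already spent the full factor $|\gamma|^{-\beta}$, leaving nothing to pay for $\langle y\rangle^\beta$. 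The case analysis you sketch for $1<\beta<2$---splitting according to $x\lessgtr y$ and $\min(x,y)\lessgtr|\gamma|^{-1}$, then using $\langle y\rangle\le C(\langle x\rangle+|x-y|)$ together with $(|\gamma||x-y|)^\beta e^{-c_1|\gamma||x-y|}\le C$---is in fact what is needed for \emph{every} $\beta>0$, and once run uniformly it also covers $\beta\ge2$, as the lemma requires. With that adjustment the argument is complete.
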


\begin{figure}
	\centering
	\includegraphics[width = 0.8\textwidth]{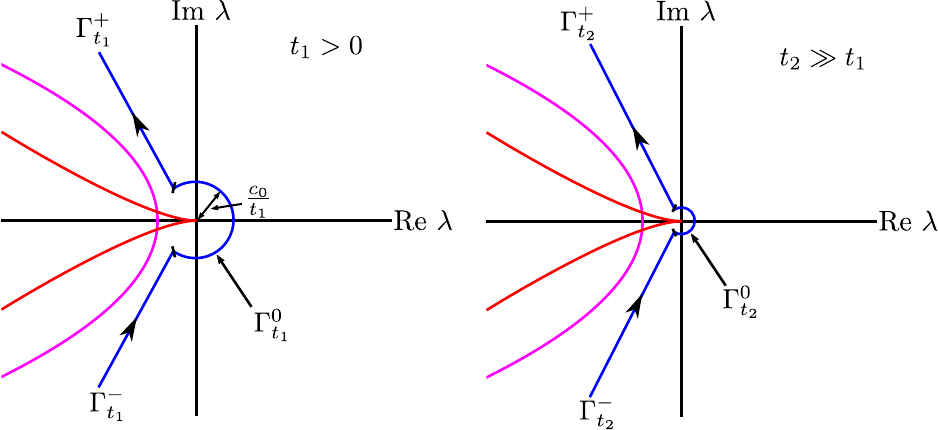}
	\caption{Fredholm borders of $\mcl$ (red, magenta) together with the integration contour used in the proof of Proposition \ref{p: linear time decay blowup}, for $t = t_1 > 0$ (left) and for $t_2 \gg t_1$ (right).}
	\label{fig: keyhole contour}
\end{figure}

$G^h_\gamma$ is uniformly exponentially localized in space for $\gamma$ small, and so convolution with $G^h_\gamma$ is uniformly bounded in $\gamma$ for $\gamma$ small between any two algebraically weighted spaces. From this and Lemma \ref{l: blowup resolvent pointwise estimates}, we obtain 
\begin{align*}
\|(\mcl_+ - \gamma^2)^{-1}\|_{L^2_{r,r} \to H^{2m-1}_{s,s}} \leq \frac{C}{|\gamma|^\beta}
\end{align*}
for $r, s, \beta$, and $\gamma$ as in Proposition \ref{p: resolvent blowup}. Again, using the far-field/core decomposition in Section \ref{ss: farfield core leading order}, we readily obtain Proposition \ref{p: resolvent blowup} from this estimate. 

We now use this control of the blowup of the resolvent to obtain time decay estimates for the semigroup. Since the resolvent is blowing up at the origin, we can no longer shift our integration contour all the way to the essential spectrum. Instead, we use a classical semigroup theory argument, integrating along a circular arc as pictured in Figure \ref{fig: keyhole contour}.

\begin{prop}\label{p: linear time decay blowup}
	Let $-\frac{3}{2} < r < \frac{1}{2}$ and $s < r-2$. For any $\beta$ with $\frac{1}{2} - r < \beta < -s - \frac{3}{2}$, there is a constant $C > 0$ such that the semigroup $e^{\mcl t}$ satisfies for $t > 1$
	\begin{align}
	\|e^{\mcl t} \|_{L^2_r \to H^{2m-1}_s} \leq \frac{C}{t^{1-\frac{\beta}{2}}}. 
	\end{align}
\end{prop}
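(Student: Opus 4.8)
The plan is to represent the semigroup by the contour integral $e^{\mcl t} = -\frac{1}{\pi i}\int_\Gamma e^{\gamma^2 t}(\mcl - \gamma^2)^{-1}\gamma\, d\gamma$, as in the proof of Proposition \ref{p: linear time decay}, and to deform $\Gamma$ onto the keyhole-type contour sketched in Figure \ref{fig: keyhole contour}. Since the resolvent blows up at the origin, we cannot push the contour all the way onto the essential spectrum; instead, the innermost portion is a circular arc of radius $t^{-1}$ in the $\lambda$-plane, equivalently radius $t^{-1/2}$ in $\gamma=\sqrt{\lambda}$, centered at the origin, together with two straight rays emanating from the endpoints of the arc, which we take in the $\gamma$-plane to be $\gamma(a) = a e^{\pm i\phi_0}$, $a \geq t^{-1/2}$, for a fixed angle $\phi_0$; beyond a fixed radius the rays are continued into the large-$\lambda$ sectorial contour used for Proposition \ref{p: linear time decay}. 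The choice of $\phi_0$ is the crux: we need $\tfrac{\pi}{4} < \phi_0 < \arctan 2$, the first inequality ensuring $\Re\gamma^2 = a^2\cos 2\phi_0 < 0$ along the rays (so that $|e^{\gamma^2 t}|$ decays), and the second ensuring that the whole local contour stays inside the region $\mathrm{Re}\,\gamma \geq \tfrac{1}{2}|\mathrm{Im}\,\gamma|$ on which the blowup estimate of Proposition \ref{p: resolvent blowup} is available. Because the Fredholm border is tangent to the imaginary $\gamma$-axis at $0$, for $t$ large (hence $t^{-1/2}$ small) both the arc and a ray at angle $\phi_0 < \tfrac{\pi}{2}$ lie strictly to the right of the essential spectrum, so the deformation is legitimate by Cauchy's theorem.

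On the part of the contour where $\Re\gamma^2$ is bounded away from zero by a negative constant — in particular the far-field continuation — we argue exactly as in Proposition \ref{p: linear time decay}: sectoriality of $\mcl$ \cite{Lunardi} and parabolic regularity \cite[Theorem 3.2.2]{Lunardi} control $(\mcl - \gamma^2)^{-1}$ for large $\gamma$, and these pieces contribute a term decaying exponentially in $t$ in $\mathcal{B}(L^2_r(\R), H^{2m-1}_s(\R))$, hence negligible. It remains to estimate the two local pieces. On the arc, parametrized by $\gamma = t^{-1/2}e^{i\phi}$ with $\phi\in[-\phi_0,\phi_0]$, we have $|e^{\gamma^2 t}| = e^{\cos 2\phi}\leq e$, the measure satisfies $|2\gamma\,d\gamma| = 2|\gamma|^2\,d\phi = 2t^{-1}\,d\phi$, and Proposition \ref{p: resolvent blowup} gives $\|(\mcl-\gamma^2)^{-1}\|_{L^2_r\to H^{2m-1}_s}\leq C|\gamma|^{-\beta} = Ct^{\beta/2}$, so the arc contributes at most $Ct^{\beta/2}\cdot t^{-1} = Ct^{-(1-\beta/2)}$. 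On a ray, writing $\gamma = ae^{i\phi_0}$ with $|2\gamma\,d\gamma| = 2a\,da$ and $\Re\gamma^2 = -\delta a^2$, $\delta = -\cos 2\phi_0 > 0$, the contribution is bounded by $C\int_{t^{-1/2}}^{\infty} e^{-\delta a^2 t}a^{-\beta}\,a\,da$; the substitution $z = a\sqrt{t}$ converts this to $Ct^{-(1-\beta/2)}\int_{1}^{\infty} e^{-\delta z^2}z^{1-\beta}\,dz$, and the remaining integral is finite. Summing the arc, the two rays, and the exponentially small far-field term yields $\|e^{\mcl t}\|_{L^2_r\to H^{2m-1}_s}\leq Ct^{-(1-\beta/2)}$ for all $t\geq t_0$ with $t_0$ fixed; for $1\leq t\leq t_0$ the bound is immediate from analyticity of the semigroup, since $e^{\mcl t}$ is bounded from $L^2_r(\R)$ to $H^{2m-1}_s(\R)$ uniformly on compact time intervals bounded away from $0$.

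The main obstacle is precisely the contour geometry: one must choose $\phi_0$ so that simultaneously (i) the near-origin contour avoids the spectrum of $\mcl$, i.e.\ stays strictly to the right of the Fredholm border; (ii) it lies in the sector $\mathrm{Re}\,\gamma\geq\tfrac{1}{2}|\mathrm{Im}\,\gamma|$ on which Proposition \ref{p: resolvent blowup} applies; and (iii) $\Re\gamma^2 < 0$ on the rays so that the time exponential provides decay. Requirements (ii) and (iii) force $\phi_0\in(\tfrac{\pi}{4},\arctan 2)$, a nonempty interval, and (i) is then met for $|\gamma|$ small because the essential spectrum is tangent to the imaginary $\gamma$-axis at the origin. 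The only remaining bookkeeping is to splice this local contour onto the large-$\lambda$ sectorial contour without introducing spurious contributions, which is routine given the large-$\gamma$ estimates already used for Proposition \ref{p: linear time decay}.
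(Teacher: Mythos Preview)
Your proof is correct and follows the same keyhole-contour strategy as the paper: represent $e^{\mcl t}$ by a contour integral, shrink the inner portion to radius $\sim t^{-1}$ in $\lambda$ (equivalently $t^{-1/2}$ in $\gamma$), and use the blowup bound of Proposition~\ref{p: resolvent blowup} on the near-origin piece. The paper works in the $\lambda$-plane and, after the rescaling $\xi=\lambda t$, estimates only the arc explicitly, asserting that the ray contributions are ``exponentially decaying in time''; your version works in $\gamma$ and is in fact more careful on precisely this point, since you observe that the near-origin portion of the rays (from $|\gamma|=t^{-1/2}$ out to a fixed radius) also passes through the blowup region and yields an additional $t^{-(1-\beta/2)}$ contribution rather than an exponentially small one. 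Your explicit choice $\phi_0\in(\tfrac{\pi}{4},\arctan 2)$ makes transparent the simultaneous constraints that Proposition~\ref{p: resolvent blowup} apply and that $\Re\gamma^2<0$ along the rays, which the paper leaves implicit in the phrase ``$c_0$ and $\phi_0$ chosen appropriately''.
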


\begin{proof}
	We integrate over the contour $\Gamma_t = \Gamma^-_t \cup \Gamma^0_t \cup \Gamma^+_t$ pictured in Figure \ref{fig: keyhole contour}. The important piece is the circular arc $\Gamma^0_t$, which we parameterize for $t > 1$, fixed, as 
	\begin{align*}
	\Gamma^0_t = \left\{ \lambda(\phi) = \frac{c_0}{t} e^{i \phi} : \phi \in (- \phi_0, \phi_0) \right\}
	\end{align*}
	with $c_0$, and $\phi_0$ chosen appropriately so that $\Gamma^0_t$ does not intersect the essential spectrum of $\mcl$ for $t > 1$, and so that Proposition \ref{p: resolvent blowup} holds for $\gamma^2 \in \Gamma^0_t$ for $t$ sufficiently large. The contours $\Gamma^\pm_t$ are rays connecting the $\Gamma^0_t$ to infinity, in the left half plane, as pictured. The semigroup $e^{\mcl t}$ may be written as 
	\begin{align*}
	e^{\mcl t} = -\frac{1}{2\pi i} \int_{\Gamma_t} e^{\lambda t} (\mcl - \lambda)^{-1} \, d \lambda. 
	\end{align*}
	The contributions to this integral from $\Gamma^\pm_t$ are exponentially decaying in time, so we focus only on the integral over $\Gamma^0_t$. Here we change variables to $\xi = \lambda t$, so that 
	\begin{align*}
	\frac{1}{2 \pi i} \int_{\Gamma^0_t} e^{\lambda t} (\mcl - \lambda)^{-1} \, d \lambda = \frac{1}{2\pi i} \frac{1}{t} \int_{\Gamma^0_1} e^{\xi} \left( \mcl - \frac{\xi}{t} \right)^{-1} \, d \xi. 
	\end{align*}
	By Proposition \ref{p: resolvent blowup}, we have for $t$ large
	\begin{align*}
	\left|\left| \left( \mcl - \frac{\xi}{t} \right)^{-1} \right| \right|_{L^2_r \to H^{2m-1}_s} \leq C \frac{t^{\beta/2}}{|\xi|^{\beta/2}},  
	\end{align*}
	and so 
	\begin{align*}
	\left| \left| \frac{1}{2 \pi i} \int_{\Gamma^0_t} e^{\lambda t} (\mcl - \lambda)^{-1} \, d \lambda \right| \right|_{L^2_r \to H^{2m-1}_s} \leq \frac{C}{t^{1-\frac{\beta}{2}}} \int_{\Gamma^0_1} |e^\xi| \frac{1}{|\xi|^{\beta/2}} \, d \xi \leq \frac{C}{t^{1-\frac{\beta}{2}}},
	\end{align*}
	as desired. 
\end{proof}

Theorem \ref{t: stability resolvent blowup} readily follows from Proposition \ref{p: linear time decay blowup} and a direct nonlinear stability argument as in Section \ref{s: nonlinear stability}. Again, we emphasize that the nonlinearity is still exponentially localized due to the extra factor of $\omega^{-1}$, and so we may use strong decay estimates on the nonlinearity to close this argument. 

\section{Examples and discussion}\label{s: discussion} \hfill

\noindent \textbf{Second order equations.} The classical setting for studying invasion fronts is that of second order scalar parabolic equations
\begin{align}
u_t = u_{xx} + f(u).
\end{align}
It is well known that if, for instance, $f(0) = f(1) = 0$, $f'(0) > 0$, $f'(1) < 0$, and $f''(u) <0$, then there exist monotone traveling fronts in this equation for all speeds $c \geq c_\mathrm{lin} = 2 \sqrt{f'(0)}$, and that the linearization about the critical front, with $c = c_\mathrm{lin}$, satisfies our spectral assumptions. In this case unstable point spectrum is ruled out using Sturm-Liouville type arguments \cite[Theorem 5.5]{Sattinger}. A more detailed discussion of conditions on $f$ which guarantee the existence of monotone fronts above certain speed thresholds is given in \cite{HadelerRothe}. 

To put our spectral assumptions in the context of dichotomies between pushed and pulled fronts, we consider a bistable nonlinearity with a parameter $0 < \mu < \frac{1}{2}$
\begin{align}
u_t = u_{xx} + u(u+\mu)(1-\mu - u). 
\end{align}
This equations has three spatially uniform equilibria, of which $u \equiv 1 - \mu$ and $u \equiv -\mu$ are stable, while $u \equiv 0$ is unstable. It is shown in \cite{HadelerRothe} that if $\frac{1}{3} < \mu \leq \frac{1}{2}$, then there exist monotone fronts connecting $1-\mu$ at $-\infty$ to $0$ at $+\infty$ for all speeds $c \geq c_\mathrm{lin} = 2 \sqrt{\mu(1-\mu)}$ --- the fronts are pulled, in the sense that the minimal propagation speed matches the linear spreading speed. In this case, our results apply to the critical front with $c = c_\mathrm{lin}$ (one may rescale the amplitude of $u$ by $(1-\mu)^{-1}$ to scale the stable state on the left to $u \equiv 1$, if desired). 

However, if $0 < \mu < \frac{1}{3}$, then there exist monotone fronts connecting $1-\mu$ to $0$ only for $c \geq c_{\mathrm{min}} = \frac{1+\mu}{\sqrt{2}} > c_\mathrm{lin}$ -- the fronts are \textit{pushed}, in that the minimal propagation speed is greater than the linear spreading speed, due to amplifying effects of the nonlinearity. In this case, there still exists a front with $c = c_\mathrm{lin}$, but this front is not monotone, and hence its linearization has an unstable eigenvalue by Sturm-Liouville considerations, and our assumption on spectral stability, Hypothesis \ref{hyp: resonances}, no longer applies. Since this front is unstable, the relevant question for the dynamics of this system is the stability of the pushed front, with $c = c_{\mathrm{min}}$. This is more straightforward than the stability of the pulled fronts considered here, as the essential spectrum can be stabilized with exponential weights, leaving only a translational eigenvalue at the origin. One then obtains orbital stability of the pushed front by projecting away the effect of this translational eigenvalue, with exponential in time decay to a translate of the front \cite{SattingerWeightedNorms}. 

At the transition between pushed and pulled fronts, $\mu = \frac{1}{3}$, we have $c_{\mathrm{min}} = c_{\mathrm{lin}}$, and there is a monotone front connecting $1 - \mu$ to $0$ with this speed. This front is marginally spectrally stable, satisfying Hypotheses \ref{hyp: spreading speed} and \ref{hyp: left} with no unstable point spectrum. However, in this case the front has strong exponential decay, $q_*(x) \sim e^{-\eta_* x}$ as $x \to \infty$, and so its derivative contributes to a resonance of the linearization in the appropriate exponentially weighted space. Hence our analysis does not apply to this threshold case, and to our knowledge, precise decay rates for perturbations to the front have not been identified. 

\noindent \textbf{The extended Fisher-KPP equation.} The extended Fisher-KPP equation 
\begin{align}
u_t = - \eps^2 u_{xxxx} + u_{xx} + f(u) 
\end{align}
may be derived from reaction-diffusion systems as an amplitude equation near certain co-dimension 2 bifurcation points \cite{RottschaferDoelman}. If $f$ is of Fisher-KPP type, e.g. $f(1) = f(0) = 0, f'(0) > 0, f'(1) < 0$, and $f''(u) < 0$ for $u \in (0,1)$, then this equation is a singular perturbation of the Fisher-KPP equation, and using methods of geometric singular perturbation theory, Rottsch\"afer and Wayne established in \cite{RottschaferWayne} that, exactly as for the Fisher-KPP equation, there is a linear spreading speed $c_\mathrm{lin}(\eps)$ such that for all speeds $c \geq c_\mathrm{lin} (\eps)$, there exist monotone front solutions connecting $1$ at $-\infty$ to $0$ at $+\infty$. In the same paper, Rottsch\"afer and Wayne also considered stability of these fronts using energy methods, establishing asymptotic stability but without identifying the temporal decay rate. 

Using functional analytic methods developed to study bifurcation of eigenvalues near resonances in the essential spectrum \cite{PoganScheel} and to regularize singular perturbations \cite{GohScheel}, one can view the analysis of the linearization about the critical front here as a perturbation of the corresponding problem for the underlying Fisher-KPP equation, and thereby show that for $\eps$ small the linearization has no unstable point spectrum and no resonance at the origin \cite{AveryGarenaux}. Our results therefore apply in this case, extending the stability results of \cite{RottschaferWayne} by giving a precise description of decay rates for perturbations. We emphasize that here stability cannot be proven using comparison principles.  

\noindent \textbf{Systems of equations.} Our approach can be readily adapted to systems of parabolic equations satisfying our assumptions. A version of Theorem \ref{t: stability} was recently proved for pulled fronts in a diffusive Lotka-Volterra model by Faye and Holzer \cite{FayeHolzerLotkaVolterra}, using the competitive structure of the system to exclude unstable eigenvalues with the comparison principle. Using our methods, one should obtain an extension of this result, removing the requirement for localization of perturbations on the left, as well as versions of Theorems \ref{t: asymptotics} through \ref{t: stability resolvent blowup} in this setting. 

Our next two examples highlight the importance of our assumption that the linearization about the front is marginally spectrally stable in a fixed exponential weight, with a focus on how this assumption relates to ensuring that the linear spreading speed identified in Hypothesis \ref{hyp: spreading speed} is the selected nonlinear propagation speed. The first example gives a system in which this assumption on exponential weights is both necessary and sufficient for nonlinear propagation at the linear spreading speed. Consider the following system of equations
\begin{align*}
u_t &= u_{xx} + u-u^3 + \eps v \\
v_t &= dv_{xx}  + g(v),
\end{align*}
with $d > 0$, $g(0) = 0$, and $g'(0) < 0$. This system has a front solution $(u(x,t), v(x,t)) = (q_* (x-2t), 0)$, where $q_*$ is the critical Fisher-KPP front in the first equation, with $q_*(-\infty) = 1$ and $q_*(\infty) = 0$. The linearized equations about $(u, v) = (0,0)$, in the co-moving frame with speed $2$, are 
\begin{align*}
u_t &= u_{xx} +2 u_x + u, \\
v_t &= d v_{xx} + 2 v_x + g'(0) v.  
\end{align*}
In order to stabilize the essential spectrum in the first equation, we use a smooth positive exponential weight 
\begin{align*}
\omega(x) = \begin{cases}
e^x, & x \geq 1, \\
1, & x \leq -1, 
\end{cases}
\end{align*}
writing $U = \omega u, V = \omega v$. The linearized equations for $U$ and $V$ about $U = V = 0$ for $x > 1$ are then
\begin{align*}
U_t &= U_{xx}, \\
V_t &= d V_{xx} + (2 - 2d) V_x + (d - 2 + g'(0) ) V. 
\end{align*}
In order to have marginal spectral stability in a fixed exponential weight, as required by Hypothesis \ref{hyp: resonances}, we must have $d < 2 - g'(0)$. Holzer demonstrated in \cite{HolzerAnomalous} that if this condition is violated, then the system exhibits anomalous spreading --- the nonlinear propagation is no longer determined by the condition in Hypothesis \ref{hyp: spreading speed}. In this case, the assumption of marginal stability in a fixed exponential weight, which we use in our analysis, is necessary and sufficient for nonlinear invasion at the linear spreading speed. Our results should apply in this system for $d < 2 - g'(0)$, using smallness of the coupling coefficient $\eps$ to obtain the spectral stability in Hypothesis \ref{hyp: resonances} via a perturbative argument. 

If one modifies this system slightly, the situation becomes more subtle. The key modification is to replace the linear coupling term $\eps v$ with quadratic coupling, as considered by Faye et al. in \cite{FayeHolzerScheel}. The examples there are amplitude equations which can be derived from systems in which a homogeneous state undergoes a pitchfork bifurcation simultaneously with a Turing bifurcation, and have the form
\begin{align*}
u_t &= u_{xx} + u - u^3 + a_1 v^2 + a_2 u v^2, \\
v_t &= d v_{xx} - b_1 v - b_2 v^3. 
\end{align*}
Such systems can be derived as amplitude equations from the class of scalar parabolic equations we consider here, if $f(u) = \mu u - u^3$ and $\mathcal{P}$ is an 8th order even polynomial satisfying 
\begin{align*}
\mathcal{P}(0) &= \eps^2, \, \, \, \, \qquad \mathcal{P}'(0) = 0, \, \, \, \, \, \mathcal{P}''(0) = 2, \\
\mathcal{P}(0) &= - b_1 \eps^2, \quad \mathcal{P}'(i) = 0, \quad \mathcal{P}''(i) = 2d. 
\end{align*}
The linearization about the unstable state $(u, v) = (0, 0)$ is unchanged from the previous example, and so $d < 2 - b_1$ is still a necessary condition for the linearization to have marginally stable essential spectrum in a fixed exponential weight. However, because the coupling terms are all at least quadratic in $v$, unlike in the previous example the linearization about the unstable state is still marginally \textit{pointwise} stable at $c = 2$ even for $d \gtrsim 2 - b_1$, in the sense that solutions to 
\begin{align*}
u_t &= u_{xx} + c u_x + u, \\
v_t &= d v_{xx} + c v_x - b_1 v
\end{align*}
with compactly supported initial data decay exponentially to zero, uniformly in space, for $c > 2$, but grow for $c < 2$ \cite{HolzerScheelPointwiseGrowth}. Hence, if $d$ is only slightly larger than $2 - b_1$, the linear spreading speed is still $c = 2$, and Faye et al. show using pointwise semigroup methods \cite{HolzerFayeScheelSiemer} that the pulled front traveling with this speed is nonlinearly stable. Hence this example demonstrates that marginal stability in a fixed exponentially weighted space is \textit{not} necessary for invasion at the linear spreading speed, although we have used this assumption for our analysis here. For large values of $d$ in this system, the coupling does change the spreading speed to a ``resonant spreading speed'' which is still linearly determined but not by a simple pinched double root criterion as in Hypothesis \ref{hyp: spreading speed} \cite{FayeHolzerScheel}. 

\bibliographystyle{plain}
\bibliography{references}

\begin{thebibliography}{10}

\bibitem{Aronson}
D.~Aronson and H.~Weinberger.
\newblock Multidimensional nonlinear diffusion arising in population genetics.
\newblock {\em Adv. Math.}, 30(1):33--76, 1978.

\bibitem{AveryGarenaux}
M.~Avery and L.~Gar\'enaux.
\newblock Spectral stability of the critical front in the extended
  {F}isher-{KPP} equation.
\newblock {\em in preparation}.

\bibitem{Bramson1}
M.~Bramson.
\newblock Maximal displacement of branching {B}rownian motion.
\newblock {\em Comm. Pure Appl. Math.}, 31(5):531--581, 1978.

\bibitem{Bramson2}
M.~Bramson.
\newblock {\em Convergence of solutions of the {K}olmogorov equation to
  traveling waves}.
\newblock Mem. Amer. Math. Soc. American Mathematical Society, 1983.

\bibitem{Bricmont}
J.~Bricmont and A.~Kupiainen.
\newblock Renormalization group and the {G}inzburg-{L}andau equation.
\newblock {\em Comm. Math. Phys}, 150(1):193--208, 1992.

\bibitem{EckmannWayne}
J.-P. Eckmann and C.~E. Wayne.
\newblock The nonlinear stability of front solutions for parabolic partial
  differential equations.
\newblock {\em Comm. Math. Phys}, 161(2):323--334, 1994.

\bibitem{Elder}
K.~R. Elder, M.~Katakowski, M.~Haataja, and M.~Grant.
\newblock Modeling elasticity in crystal growth.
\newblock {\em Phys. Rev. Lett.}, 88:245701, 2002.

\bibitem{FayeHolzer}
G.~Faye and M.~Holzer.
\newblock Asymptotic stability of the critical {F}isher--{KPP} front using
  pointwise estimates.
\newblock {\em Z. Angew. Math. Phys.}, 70(1):13, 2018.

\bibitem{FayeHolzerLotkaVolterra}
G.~{Faye} and M.~{Holzer}.
\newblock {Asymptotic stability of the critical pulled front in a
  Lotka-Volterra competition model}.
\newblock {\em preprint}, 2019.

\bibitem{FayeHolzerScheel}
G.~Faye, M.~Holzer, and A.~Scheel.
\newblock Linear spreading speeds from nonlinear resonant interaction.
\newblock {\em Nonlinearity}, 30(6):2403--2442, 2017.

\bibitem{HolzerFayeScheelSiemer}
G.~Faye, M.~Holzer, A.~Scheel, and L.~Siemer.
\newblock The stability and dynamics of fronts invading remnantly unstable
  states: a case study.
\newblock {\em preprint}, 2020.

\bibitem{FiedlerScheel}
B.~Fiedler and A.~Scheel.
\newblock Spatio-temporal dynamics of reaction-diffusion patterns.
\newblock In {\em Trends in Nonlinear Analysis}, pages 23--152, Berlin,
  Heidelberg, 2003. Springer, Berlin Heidelberg.

\bibitem{ElderGalenko}
P.~K. Galenko and K.~R. Elder.
\newblock Marginal stability analysis of the phase field crystal model in one
  spatial dimension.
\newblock {\em Phys. Rev. A}, 83:064113, 2011.

\bibitem{Gallay}
T.~Gallay.
\newblock Local stability of critical fronts in nonlinear parabolic partial
  differential equations.
\newblock {\em Nonlinearity}, 7(3):741--764, 1994.

\bibitem{GallayScheel}
T.~Gallay and A.~Scheel.
\newblock Diffusive stability of oscillations in reaction-diffusion systems.
\newblock {\em Trans. Amer. Math. Soc}, 363(5):2571--2598, 2011.

\bibitem{GohScheel}
R.~Goh and A.~Scheel.
\newblock Pattern-forming fronts in a {S}wift–{H}ohenberg equation with
  directional quenching — parallel and oblique stripes.
\newblock {\em J. Lond. Math. Soc.}, 98(1):104--128, 2018.

\bibitem{HadelerRothe}
K.~P. Hadeler and F.~Rothe.
\newblock Traveling fronts in nonlinear diffusion equations.
\newblock {\em J. Math. Biol.}, 2(1):251--263, 1975.

\bibitem{Comparison1}
F.~Hamel, J.~Nolen, J.-M. Roquejoffre, and L.~Ryzhik.
\newblock A short proof of the logarithmic {B}ramson correction in
  {F}isher-{KPP} equations.
\newblock {\em Netw. Heterog. Media}, 8(1):275--289, 2013.

\bibitem{Henry}
D.~Henry.
\newblock {\em Geometric theory of semilinear parabolic equations}.
\newblock Lecture Notes in Math. Springer-Verlag, Berlin Heidelberg, 1981.

\bibitem{HolzerAnomalous}
M.~Holzer.
\newblock Anomalous spreading in a system of coupled {F}isher–{KPP}
  equations.
\newblock {\em Phys. D}, 270:1--10, 2014.

\bibitem{HolzerScheelPointwiseGrowth}
M.~Holzer and A.~Scheel.
\newblock Criteria for pointwise growth and their role in invasion processes.
\newblock {\em J. Nonlinear Sci.}, 24(1):661--709, 2014.

\bibitem{JensenHalfline}
A.~Jensen and G.~Nenciu.
\newblock Schr{\"o}dinger operators on the half line: Resolvent expansions and
  the {F}ermi golden rule at thresholds.
\newblock {\em Proc. Indian Acad. Sci. Math. Sci.}, 116(4):375--392, 2006.

\bibitem{KapitulaPromislow}
T.~Kapitula and K.~Promislow.
\newblock {\em Spectral and dynamical stability of nonlinear waves}.
\newblock Appl. Math, Sci. Springer, New York, 2013.

\bibitem{Kato}
T.~Kato.
\newblock {\em Perturbation theory for linear operators}.
\newblock Springer-Verlag, Berlin Heidelberg, 1995.

\bibitem{Kirchgassner}
K.~Kirchg{\"a}ssner.
\newblock On the nonlinear dynamics of travelling fronts.
\newblock {\em J. Differential Equations}, 96(2):256--278, 1992.

\bibitem{Kolmogorov}
A.~Kolmogorov, I.~Petrovskii, and N.~Piskunov.
\newblock Etude de l'equation de la diffusion avec croissance de la quantite de
  matiere et son application a un probleme biologique.
\newblock {\em Bjul. Moskowskogo Gos. Univ. Ser. Internat. Sec. A}, 1:1--26,
  1937.

\bibitem{Lau}
K.-S. Lau.
\newblock On the nonlinear diffusion equation of {K}olmogorov, {P}etrovsky, and
  {P}iscounov.
\newblock {\em J. Differential Equations}, 59(1):44--70, 1985.

\bibitem{Lunardi}
A.~Lunardi.
\newblock {\em Analytic semigroups and optimal regularity in parabolic
  problems}.
\newblock Progr. Nonlinear Differential Equations Appl. Birkhauser, Basel,
  1995.

\bibitem{PoganScheel}
A.~Pogan and A.~Scheel.
\newblock Instability of spikes in the presence of conservation laws.
\newblock {\em Z. Angew. Math. Phys.}, 61(6):979--998, 2010.

\bibitem{RottschaferWayne}
V.~Rottsch{\"a}fer and C.E. Wayne.
\newblock Existence and stability of traveling fronts in the extended
  {F}isher-{K}olmogorov equation.
\newblock {\em J. Differential Equations}, 176(2):532--560, 2001.

\bibitem{RottschaferDoelman}
V.~Rottschäfer and A.~Doelman.
\newblock On the transition from the {G}inzburg-{L}andau equation to the
  extended {F}isher-{K}olmogorov equation.
\newblock {\em Phys. D}, 118(3):261--292, 1998.

\bibitem{AbsoluteSpecArndBjorn}
B.~Sandstede and A.~Scheel.
\newblock Absolute and convective instabilities of waves on unbounded and large
  bounded domains.
\newblock {\em Phys. D}, 145(3):233--277, 2000.

\bibitem{ArndBjornEvansfcn}
B.~Sandstede and A.~Scheel.
\newblock Evans function and blow-up methods in critical eigenvalue problems.
\newblock {\em Discrete Contin. Dyn. Syst.}, 10(4):941--964, 2004.

\bibitem{ArndBjornRelativeMorse}
B.~Sandstede and A.~Scheel.
\newblock Relative morse indices, {F}redholm indices, and group velocities.
\newblock {\em Discrete Contin. Dyn. Syst.}, 20(1):139--158, 2008.

\bibitem{Sattinger}
D.~Sattinger.
\newblock On the stability of waves of nonlinear parabolic systems.
\newblock {\em Adv. Math.}, 22(3):312--355, 1976.

\bibitem{SattingerWeightedNorms}
D.~Sattinger.
\newblock Weighted norms for the stability of traveling waves.
\newblock {\em J. Differential Equations}, 25(1):130--144, 1977.

\bibitem{vanSaarloos}
W.~van Saarloos.
\newblock Front propagation into unstable states.
\newblock {\em Phys. Rep.}, 386(2):29--222, 2003.

\end{thebibliography}

\end{document}